\documentclass[11pt]{article}

\usepackage[margin=1in]{geometry}
\usepackage[T1]{fontenc}
\usepackage[utf8]{inputenc}
\usepackage{lmodern}
\usepackage{microtype}
\usepackage{amsmath,amssymb,amsthm,mathtools}
\usepackage{booktabs,array}
\usepackage{graphicx}
\usepackage{enumitem}
\usepackage[hidelinks]{hyperref}
\usepackage{doi}
\usepackage{authblk}
\usepackage[backend=biber,citestyle=numeric-comp,bibstyle=ieee,sorting=none,minbibnames=5,maxbibnames=8,giveninits=true]{biblatex}
\AtBeginBibliography{\small}
\hypersetup{
  pdftitle={Toeplitz multiplication and graded factorization of determinant recurrences},
  pdfauthor={Max A. Alekseyev and Dmitry I. Khomovsky}
}

\newtheorem{theorem}{Theorem}[section]
\newtheorem{proposition}[theorem]{Proposition}
\newtheorem{lemma}[theorem]{Lemma}
\newtheorem{corollary}[theorem]{Corollary}

\newtheorem{example}[theorem]{Example}
\newtheorem{definition}[theorem]{Definition}
\theoremstyle{remark}
\newtheorem{remark}[theorem]{Remark}

\DeclareMathOperator{\rank}{rank}
\newcommand{\T}{T}
\newcommand{\cR}{\mathcal R}
\newcommand{\cK}{\mathcal K}
\newcommand{\cP}{\mathcal P}
\newcommand{\defeq}{:=}

\title{Toeplitz multiplication and graded factorization of determinant recurrences}
\author[1]{Max A. Alekseyev}
\author[2]{Dmitry I. Khomovsky}
\affil[1]{\small The George Washington University, Washington, DC, USA. Email: \href{mailto:maxal@gwu.edu}{maxal@gwu.edu}}
\affil[2]{\small Email: \href{mailto:khomovskij@physics.msu.ru}{khomovskij@physics.msu.ru}}
\date{}

\begin{document}
\maketitle

\begin{abstract}
We study how determinant recurrences of banded Toeplitz matrices behave when
their Laurent symbols are multiplied.  Two classical structures underlie the
problem.  Clean banded Toeplitz determinants have a root-product description of
their recurrences, while a product of two finite Toeplitz sections differs from
the finite section of the product by finite-rank corner corrections.  We
connect these structures by identifying multiplication-induced boundary defects
with exterior-degree sectors of the determinant recurrence.

To each polynomial core we associate recurrence polynomials for all exterior
degrees.  Multiplication of cores becomes a graded convolution via composed
products, while complementary degrees are related by a scaled reciprocal
duality.  On the matrix side, one-sided factors produce a defect filtration
whose successive pieces are exactly these sectors.  For general two-sided
factors, independently weighting the two corner corrections gives an interval
filtration: increasing either defect order adds one adjacent sector, and the
cumulative recurrences are generically minimal.  The multiplication-induced
same-corner minors used in these filtrations lie in the standard row-column
state module, whereas cross-corner imbalance corresponds instead to Laurent
recentering.

Opposite Laurent recenterings realize the individual sectors directly, and for
several factors the admissible recenterings form an explicit lattice polytope.
The pentadiagonal case gives the basic $1+4+1$ decomposition.  The resulting
framework gives a factor-level description of how finite-section boundary
effects generate the recurrence spectrum of structured banded Toeplitz
products.
\end{abstract}

\noindent\textbf{Key words.} banded Toeplitz matrix; determinant recurrence; finite-section product; finite-rank correction; exterior power; composed product.

\medskip
\noindent\textbf{MSC codes.} 15B05; 15A15; 15A75.

\section{Introduction}

Let
\[
 a(z)\defeq\sum_{\nu=-r_a}^{s_a}a_\nu z^\nu,
 \qquad a_{-r_a}a_{s_a}\ne0,
\]
be a banded Laurent polynomial and let
\[
 \T_n(a)\defeq(a_{j-i})_{i,j=1}^n
\]
be the $n\times n$ leading finite section of its bi-infinite Toeplitz matrix.
The interval $[-r_a,s_a]$ is the support of $a$; $r_a$ and $s_a$ are its lower
and upper semibandwidths.  We call
\[
 q_a(z)\defeq z^{r_a}a(z)
\]
the \emph{polynomial core}.  The determinant sequence
$D_n(a)\defeq\det\T_n(a)$ satisfies a constant-coefficient recurrence.  In the
generic case its minimal recurrence order is
$\binom{r_a+s_a}{s_a}$, and the recurrence modes are fixed-cardinality
products of the roots of $q_a$; see the companion paper
\cite{AlekseyevKhomovskyRecurrences2026} and the classical root-product formula
of Widom \cite{Widom1958}.  Proposition~\ref{prop:background-recurrence}
restates the precise recurrence input used below.  Here and throughout,
\emph{generic} means outside a proper algebraic exceptional set of coefficient
values.

We ask how this recurrence structure behaves under multiplication.  For a
second banded symbol $b$ put $c\defeq ab$.  On bi-infinite Toeplitz matrices
multiplication is exact, whereas for finite sections
\[
 \T_n(a)\T_n(b)
\]
differs from $\T_n(c)$ only near two opposite corners.  These finite-rank
corrections are the \emph{corner defects}.  At the same time,
\[
 \det(\T_n(a)\T_n(b))=D_n(a)D_n(b),
\]
so a factorization of the symbol gives immediate recurrence information about
an almost-Toeplitz product.  The issue is to identify the additional recurrence
modes supplied when the corner defects are restored.

Already two tridiagonal factors show the mechanism.  Their determinant
sequences generically have order two, so their termwise product has at most
four modes, whereas a generic pentadiagonal Toeplitz determinant has order six.
For coprime quadratic cores, the companion space splits as
\[
 \bigwedge^2(V_a\oplus V_b)
 \simeq
 \bigwedge^2V_a\oplus(V_a\otimes V_b)\oplus\bigwedge^2V_b,
\]
with dimensions $1+4+1$.  The middle summand is seen by the product of the two
order-two determinant recurrences; the two one-dimensional summands are the
missing modes.  More generally, we call the summands obtained by distributing
an exterior degree among the factor spaces \emph{sectors}.

This leads to two compatible descriptions.  On the recurrence side, we attach
to a polynomial core its full graded family of exterior-power recurrence
polynomials.  Multiplication of cores becomes a graded convolution, with
pairwise products of modes encoded by the classical composed product.  On the
finite-section side, opposite Laurent shifts
\[
 c=(z^\delta a)(z^{-\delta}b)
\]
preserve the product symbol while redistributing corner-defect rank.  We call
this operation \emph{recentring}, and $\delta$ the \emph{defect index}.  The
admissible defect indices are in bijection with the graded sectors.

\subsection{Relation to previous work and contributions}\label{sec:related-work}
Widom's formula gives the fixed-cardinality root products governing banded
Toeplitz determinants \cite{Widom1958}; Alexandersson placed these roots in a
Schur-polynomial recurrence framework \cite{Alexandersson2012}, with related
skew-Schur formulas for banded Toeplitz minors due to Maximenko and
Moctezuma-Salazar \cite{MaximenkoMoctezuma2017}.  The finite-section product
identity for Toeplitz operators is also classical: the discrepancy between the
Toeplitz section of a product and the product of the two sections is expressed
through Hankel boundary terms \cite{Widom1976}; see also
\cite{BottcherSilbermann2006,Virtanen2022}.  Finite-dimensional algebras
associated with Toeplitz matrix sequences were studied, for example, by
Serra-Capizzano \cite{SerraCapizzano2001}.  Toeplitz products and
Toeplitz-plus-low-rank representations also occur in structured numerical
linear algebra: Chandrasekaran and Sayed treat products of Toeplitz matrices as
shift-structured systems \cite{ChandrasekaranSayed1998}, while quasi-Toeplitz
arithmetic exploits Toeplitz-plus-low-rank representations for finite and
semi-infinite computations \cite{BiniMasseiRobol2019}.  Thus neither
root-product modes nor the finite-section product mechanism is new here.  The
novel contribution of this paper is their exact coupling: finite-section defect
rank and defect order are identified with exterior-degree recurrence sectors,
leading to a multiplicative graded recurrence calculus, generically sharp
defect filtrations, and a recentering description of the individual sectors.
Our companion preprint \cite{AlekseyevKhomovskyRecurrences2026} develops recurrence
constructions for a single band and proves generic minimality.  We use that
result here as recurrence input; the multiplication law, defect filtrations,
and recentering theory developed below are separate from that construction.

Our first result is a multiplicative calculus for the graded recurrence
profile.  The profile of a product core is the graded convolution of the factor
profiles, and complementary exterior degrees are related by a scaled reciprocal
duality.  For finite sections we determine the exact ranks of the two corner
defects.  For one-sided products, interpolation from the exact finite product
to the clean Toeplitz section produces a defect-order filtration whose
cumulative annihilator through order $k$ is the product of the first $k+1$
sector polynomials and is generically minimal.  The filtration is precisely the
standard filtration of $\bigwedge^s(V_a\oplus V_b)$ by $V_a$-exterior degree.
For two-sided products, independently weighting the two corner defects gives
an interval bifiltration: bidegree $(k,\ell)$ contains exactly the sectors with
defect indices $-\ell,\ldots,k$, generically, and the corresponding cumulative
annihilator is generically minimal.

Recentring realizes the individual sectors directly and transfers defect rank
between the two boundaries.  For several factors, the cumulative defect
indices form a bounded lattice polytope whose integer points are exactly the
multi-index sectors.  Multiplication by a bidiagonal factor gives the basic
rank-one/Pascal specialization, while two tridiagonal factors recover the
classical degree-six pentadiagonal recurrence as the $1+4+1$ decomposition.
The five-diagonal literature includes explicit determinant formulas and
generating functions \cite{MarrVineyard1988,AndelicDaFonseca2021},
tridiagonal factorization methods \cite{DieleLopez1998}, and homogeneous
recurrence formulas \cite{JiaYangLi2016,Sweet}; our use of this case is to
expose its graded sector structure.

From the matrix-analysis viewpoint, the results identify exactly which
recurrence modes are generated by finite-rank boundary corrections under a
structured Toeplitz factorization.  They also provide factor-by-factor,
root-free formulas for assembling the canonical determinant recurrence when a
factorization of the polynomial core is available.  This is potentially useful
for symbolic and structured determinant computations and for analyzing how
finite-section products alter recurrence spectra.  The present results are
exact algebraic statements; we make no numerical-stability or performance
claims.

The paper is organized as follows.  Section~\ref{sec:profiles} develops graded
recurrence profiles, complementary-sector duality, and the multiplication law.
Section~\ref{sec:finite-products} proves the finite-section defect formula and
the one-sided defect-order filtration.
Section~\ref{sec:one-diagonal} treats one-diagonal growth and its Pascal rule.
Section~\ref{sec:defect-indices} identifies recentering sectors and proves the
two-corner interval filtration.  Section~\ref{sec:pentadiagonal} gives the
pentadiagonal $1+4+1$ prototype.  Section~\ref{sec:multifactor} treats arbitrary
products and the defect-index polytope, and Section~\ref{sec:algebra} records
the resulting recurrence-profile algebra.  Appendix~\ref{app:boundary-closure}
uses the companion row-column machinery to eliminate the standard-sector,
same-corner minors produced by finite-section multiplication, explains why
cross-corner imbalance instead changes the Laurent profile, and records the
exceptional geometries that collapse to one or two clean determinant terms.
Appendix~\ref{app:factorized-construction} gives the output-sensitive
construction for factorized polynomial cores.

\section{Graded recurrence profiles}\label{sec:profiles}

We work over a field $K$ and, when needed, pass to an algebraic closure.
Consider
\[
 q(z)\defeq q_0+q_1z+\cdots+q_dz^d,
 \qquad q_0q_d\ne0.
\]
For $0\le k\le d$, set
\begin{equation}\label{eq:standard-symbol}
 a_{q,k}(z)\defeq z^{k-d}q(z).
\end{equation}
This Laurent polynomial has lower and upper semibandwidths $d-k$ and $k$,
respectively.  We call $k$ the \emph{exterior degree}, because the companion-
matrix construction uses the $k$th exterior power $\bigwedge^k$ to obtain the
corresponding recurrence.  Thus moving the main diagonal through the same
polynomial core $q$ produces a family of Toeplitz determinant problems indexed
by $k$.

Let $\rho_1,\ldots,\rho_d$ denote the roots of $q$, counted with
multiplicity, and write
\[
 [d]\defeq\{1,\ldots,d\}.
\]
The \emph{degree-$k$ recurrence polynomial} associated with $q$ is
\begin{equation}\label{eq:chi-qk}
 \chi_{q,k}(t)
 \defeq
 \prod_{\substack{I\subseteq[d]\\ |I|=k}}
 \left(
 t-(-1)^kq_d\prod_{i\in I}\rho_i
 \right).
\end{equation}
Its roots are the fixed-cardinality products that occur in Widom's determinant
formula and, more generally, in recurrence formulas for banded Toeplitz minors
\cite{Widom1958,Alexandersson2012}.  We refer to these roots as the
\emph{degree-$k$ recurrence modes}.  The coefficients of \eqref{eq:chi-qk}
are symmetric in the roots and therefore lie in the coefficient field of $q$.

Let $C_q$ denote the Frobenius companion matrix of the monic polynomial
$q/q_d$.  The induced linear map on the $k$th exterior power is denoted by
$\bigwedge^k C_q$; in matrix terminology it is the $k$th compound matrix of
$C_q$.

\begin{proposition}[Toeplitz determinant recurrence]\label{prop:background-recurrence}
For every $0\le k\le d$, the polynomial $\chi_{q,k}$ annihilates the
sequence
\[
 D_n(q,k)\defeq\det\T_n(a_{q,k}).
\]
Equivalently, $\chi_{q,k}$ is the characteristic polynomial of the normalized
compound matrix
\[
 (-1)^kq_d\,\bigwedge^k C_q.
\]
For generic coefficients of $q$, $\chi_{q,k}$ is the minimal recurrence
polynomial of $D_n(q,k)$, and the generic minimal recurrence order is
$\binom{d}{k}$.
\end{proposition}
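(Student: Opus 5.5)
The plan is to derive the annihilation statement from Widom's formula and then identify $\chi_{q,k}$ spectrally. For $a=a_{q,k}$ with lower semibandwidth $r=d-k$ and upper semibandwidth $s=k$, the symbol is $a(z)=z^{-r}q(z)=q_d z^{-r}\prod_{i=1}^d(z-\rho_i)$.

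\textbf{Widom's formula.} When the roots are distinct, it gives
\[
 D_n(a)=\sum_{|I|=s} C_I\, w_I^{\,n},
 \qquad
 w_I=(-1)^s q_d\prod_{i\in I}\rho_i .
\]
Here the coefficients $C_I$ are independent of $n$ and involve only Vandermonde-type ratios of the roots. In the standard form one writes $w_I=(-1)^s a_s\prod_{i\in I}\rho_i$ with $a_s=q_d$ the top coefficient. The sign and normalization must be checked against $n=0,1$, for instance $D_1=a_0=q_{d-k}$. Indeed, $\sum_I w_I=(-1)^k q_d e_k(\rho)=q_{d-k}$ is consistent with this check, since $C_I$ sums appropriately.

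\textbf{Annihilation for distinct roots.} Each $w_I$ is a root of $\chi_{q,k}$, so the sequence $D_n$ is annihilated by $\chi_{q,k}$ whenever the roots are distinct.

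\textbf{Extension to repeated roots.} I extend to all $q$ by a polynomial-identity argument. The statement ``$\chi_{q,k}(E)D_n=0$ for all $n$'' is a family of polynomial identities in the coefficients $q_0,\dots,q_d$. The coefficients of $\chi_{q,k}$ are symmetric in the roots, hence polynomial in $q_j/q_d$ times powers of $q_d$. Each $D_n$ is polynomial in the $q_j$. Since these identities hold on the Zariski-dense set of distinct roots, they hold everywhere.

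\textbf{Compound-matrix characterization.} The eigenvalues of $C_q$ are the $\rho_i$. Hence the eigenvalues of $\bigwedge^k C_q$ are $\prod_{i\in I}\rho_i$ over $|I|=k$, with multiplicity; this is clear by triangularizing over the algebraic closure. Scaling by $(-1)^kq_d$ gives exactly the roots of $\chi_{q,k}$, so the characteristic polynomials agree.

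\textbf{Generic minimality.} This is the main obstacle. The minimal polynomial of $D_n$ has degree $\binom dk$ exactly when every $C_I\ne0$ and the $w_I$ are pairwise distinct. Distinctness of the $w_I$ is generic: the products $\prod_{i\in I}\rho_i$ differ for a generic root configuration, for example one with multiplicatively independent roots. Nonvanishing of $C_I$ requires the explicit Widom coefficient, which is a ratio of Vandermonde-type products. For $I$ the chosen subset and $J$ its complement, this is
\[
 C_I=\prod_{i\in I,\,j\in J}\frac{\rho_i}{\rho_i-\rho_j}\quad\text{(up to sign)},
\]
and it is nonzero when the roots are distinct and nonzero. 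Then a Vandermonde argument on the first $\binom dk$ terms shows that no proper divisor of $\chi_{q,k}$ annihilates the sequence.

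The genericity exceptional set is the union of the discriminant locus and the coincidence loci $w_I=w_J$. This is a proper algebraic set, because it misses an explicit root configuration, and it pulls back to a proper algebraic set in coefficient space. If the explicit form of $C_I$ is uncertain, I would instead cite the companion paper's generic minimality for this final step.
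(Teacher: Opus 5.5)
Your argument is correct, and it takes a genuinely different route from the paper. The paper gives no independent argument. It cites the exterior-power (row--column) construction of the companion paper, in which $D_n(q,k)$ is realized as a scalar matrix coefficient of powers of a transfer similar to $(-1)^kq_d\bigwedge^kC_q$. In that setting annihilation comes from Cayley--Hamilton for an explicit transfer matrix, the word ``Equivalently'' in the statement refers to exactly this realization, and generic minimality is imported from the companion paper.

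You instead work at the scalar level from Widom's closed formula:
\begin{itemize}
\item annihilation on the simple-root locus, extended by Zariski density;
\item the characteristic-polynomial identity by a separate eigenvalue count for the compound;
\item minimality by the Vandermonde argument, once the modes are distinct and the amplitudes nonzero.
\end{itemize}
Your amplitude is exactly Widom's, $C_I=\prod_{i\in I}\rho_i^{\,d-k}\prod_{i\in I,\,j\notin I}(\rho_i-\rho_j)^{-1}$, with no sign ambiguity, so the fallback citation is unnecessary. The exceptional set is simply the zero set of $\operatorname{disc}(\chi_{q,k})$, which for $1\le k\le d-1$ already contains the discriminant locus of $q$.

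What your route buys is a self-contained proof with explicit amplitudes and an explicit exceptional locus. What it does not supply is the transfer-matrix realization itself. That realization, not just the scalar recurrence, is the form of this input the paper relies on later: in Lemma~\ref{lem:fixed-boundary-compound}, in the sector filtrations, and in the observability argument of Appendix~\ref{app:boundary-closure}.

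One small slip: your normalization check conflates $D_1=\sum_IC_Iw_I$ with $\sum_Iw_I$. Both equal $q_{d-k}$, but the first equality is Widom's formula at $n=1$, not a consequence of $\sum_IC_I=1$. The sign convention therefore rests on the citation, not on that check.
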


\begin{proof}
This is the determinant specialization of the exterior-power construction in
\cite{AlekseyevKhomovskyRecurrences2026}; the explicit eigenvalues of the
normalized compound matrix are precisely the factors displayed in
\eqref{eq:chi-qk}.
\end{proof}

\begin{remark}[Nongeneric specializations]\label{rem:nongeneric}
Proposition~\ref{prop:background-recurrence} has a canonical and a minimal
part.  The polynomial $\chi_{q,k}$ remains a valid annihilator after every
specialization of the coefficients for which $q_0q_d\ne0$.  At exceptional
parameter values, however, the minimal recurrence polynomial may be a proper
divisor of $\chi_{q,k}$.  This can happen because distinct subset-product
modes coincide, because some modes have zero coefficient in the scalar
determinant sequence, or because a repeated root of $q$ produces a confluent
situation.  In particular, a collision of two subset products alone does not
determine the multiplicity of the corresponding root in the minimal recurrence
polynomial.  All profile identities below concern the canonical annihilator
$\chi_{q,k}$; they do not attempt to remove factors that become redundant
only after a nongeneric specialization.
\end{remark}

The extreme exterior degrees are one-dimensional and satisfy
\begin{equation}\label{eq:extreme-profiles}
 \chi_{q,0}(t)=t-q_d,
 \qquad
 \chi_{q,d}(t)=t-q_0.
\end{equation}
Indeed, the product of all roots is $(-1)^dq_0/q_d$.

\subsection{Complementary-sector duality}\label{subsec:duality}

The graded profile has an intrinsic redundancy: exterior degrees $k$ and
$d-k$ determine one another.  Set
\begin{equation}\label{eq:duality-data}
 N_k\defeq\binom dk,
 \qquad
 A_k\defeq\binom{d-1}{k-1},
 \qquad
 B_k\defeq\binom{d-1}{k},
 \qquad
 K_q\defeq q_0q_d,
\end{equation}
where a binomial coefficient outside its usual range is interpreted as zero.

\begin{theorem}[Complementary-sector duality]\label{thm:sector-duality}
For every $0\le k\le d$,
\begin{equation}\label{eq:sector-duality}
 \chi_{q,d-k}(t)
 =
 (-1)^{N_k}q_0^{-A_k}q_d^{-B_k}\,
 t^{N_k}\chi_{q,k}\!\left(\frac{K_q}{t}\right).
\end{equation}
Thus the canonical annihilators in exterior degrees $k$ and $d-k$ are scaled
reciprocals of one another.  The identity remains valid under nongeneric
specializations for which $q_0q_d\ne0$.
\end{theorem}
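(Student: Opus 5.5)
The plan is to prove the identity directly from the root-product definition \eqref{eq:chi-qk}, working over an algebraic closure, and to pair each $k$-subset $I\subseteq[d]$ with its complement $I^c$, which is a $(d-k)$-subset.

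Write $\mu_I\defeq(-1)^kq_d\prod_{i\in I}\rho_i$ for the mode attached to $I$. Since $\prod_{i=1}^d\rho_i=(-1)^dq_0/q_d$, we have
\[
 \mu_{I^c}=(-1)^{d-k}q_d\prod_{i\notin I}\rho_i=(-1)^{d-k}q_d\cdot\frac{(-1)^dq_0/q_d}{\prod_{i\in I}\rho_i}=\frac{(-1)^kq_0}{\prod_{i\in I}\rho_i}=\frac{q_0q_d}{\mu_I}=\frac{K_q}{\mu_I}.
\]
Here $\mu_I\ne0$ because $q_0\ne0$ forces every root to be nonzero. Thus complementation is a bijection between the degree-$k$ and degree-$(d-k)$ modes, and it sends $\mu\mapsto K_q/\mu$. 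This is the whole conceptual content of the theorem.

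It remains to compute the scaled reciprocal. We have
\[
 t^{N_k}\chi_{q,k}(K_q/t)=\prod_I(K_q-t\mu_I)=\prod_I(-\mu_I)\prod_I\bigl(t-K_q/\mu_I\bigr)=(-1)^{N_k}\Bigl(\prod_I\mu_I\Bigr)\chi_{q,d-k}(t),
\]
using $N_k=\binom dk=\binom d{d-k}$ and the bijection $I\mapsto I^c$.

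So the only step requiring care is evaluating $\prod_I\mu_I$; everything else is mechanical, and that product is where the exponents $A_k$ and $B_k$ come from. Each index $i$ lies in exactly $A_k=\binom{d-1}{k-1}$ of the $k$-subsets, so
\[
 \prod_I\mu_I=(-1)^{kN_k}q_d^{N_k}\bigl((-1)^dq_0/q_d\bigr)^{A_k}=(-1)^{kN_k+dA_k}q_0^{A_k}q_d^{N_k-A_k},
\]
and $N_k-A_k=B_k$ by Pascal's rule.

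The sign also needs checking. Using $kN_k=dA_k$, the exponent is $kN_k+dA_k=2dA_k$, which is even, so the sign is $+1$. Solving for $\chi_{q,d-k}$ then gives exactly \eqref{eq:sector-duality}, with prefactor $(-1)^{N_k}q_0^{-A_k}q_d^{-B_k}$.

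The edge cases $k=0$ and $k=d$ are covered by the binomial conventions (for $k=0$ we have $A_0=0$ and $B_0=1$) and can be checked against \eqref{eq:extreme-profiles}.

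Finally, nothing in the argument uses genericity. Only $q_0q_d\ne0$ and root multiplicities enter, so the identity holds under every admissible specialization; alternatively, both sides are polynomial in the coefficients, so the identity extends by continuity or density.
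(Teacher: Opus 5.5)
Your proposal is correct and follows essentially the same route as the paper. You pair $I$ with $I^c$ to get $\mu_I\mu_{I^c}=K_q$, compute $\prod_I\mu_I=q_0^{A_k}q_d^{B_k}$ with the sign fixed by $kN_k=dA_k$, and then make explicit the reciprocal-polynomial step $t^{N_k}\chi_{q,k}(K_q/t)=(-1)^{N_k}\bigl(\prod_I\mu_I\bigr)\chi_{q,d-k}(t)$, which the paper invokes as standard.
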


\begin{proof}
Write the roots of $q$ as $\rho_1,\ldots,\rho_d$.  For a $k$-element subset
$I\subseteq[d]$, write
\[
 W_I^{(k)}\defeq(-1)^kq_d\prod_{i\in I}\rho_i
\]
for the corresponding root of $\chi_{q,k}$.  Its complementary subset $I^c$
has size $d-k$, and
\begin{align}
 W_I^{(k)}W_{I^c}^{(d-k)}
 &=(-1)^dq_d^2\prod_{i=1}^{d}\rho_i\notag\\
 &=q_0q_d
 =K_q.
 \label{eq:complementary-modes}
\end{align}
Hence complementation sends the root multiset of $\chi_{q,k}$ to the root
multiset obtained by $W\mapsto K_q/W$.

It remains to determine the monic normalization.  The product of all roots of
$\chi_{q,k}$ is
\begin{equation}\label{eq:product-sector-roots}
 \prod_{|I|=k}W_I^{(k)}=q_0^{A_k}q_d^{B_k}.
\end{equation}
Indeed, each $\rho_i$ occurs in $A_k$ subsets, while
$N_k-A_k=B_k$; the sign is positive because
$kN_k=dA_k$.  The standard reciprocal-polynomial identity now gives
\eqref{eq:sector-duality}.
\end{proof}

The self-dual case is especially relevant for balanced Toeplitz bands.

\begin{corollary}[Balanced self-reciprocity]\label{cor:balanced-self-duality}
Let $d=2m$, $k=m$, and
\begin{equation}\label{eq:balanced-duality-data}
 N\defeq\binom{2m}{m},
 \qquad
 K_q\defeq q_0q_{2m}.
\end{equation}
Then $N$ is even and
\begin{equation}\label{eq:balanced-self-reciprocal}
 \chi_{q,m}(t)
 =K_q^{-N/2}t^N\chi_{q,m}\!\left(\frac{K_q}{t}\right).
\end{equation}
Writing
\begin{equation}\label{eq:balanced-coefficients}
 \chi_{q,m}(t)\defeq\sum_{j=0}^{N}c_jt^j,
 \qquad c_N\defeq1,
\end{equation}
we have
\begin{equation}\label{eq:balanced-coefficient-symmetry}
 c_j=K_q^{N/2-j}c_{N-j},
 \qquad 0\le j\le N.
\end{equation}
\end{corollary}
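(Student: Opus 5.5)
We specialize Theorem~\ref{thm:sector-duality} to $d=2m$, $k=m$, so that the two exterior degrees $k$ and $d-k$ coincide. The plan has three steps: check that $N$ is even, simplify the prefactor in \eqref{eq:sector-duality}, and then compare coefficients.

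\textbf{Parity of $N$.} We use the identity $N=\binom{2m}{m}=2\binom{2m-1}{m-1}$ for $m\ge1$; for $m=0$ we have $d=0$, which we exclude or treat separately. This gives $N$ even.

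\textbf{Simplifying the prefactor.} Pascal symmetry gives
\[
A_m=\binom{2m-1}{m-1}=\binom{2m-1}{m}=B_m=N/2.
\]
Since $N$ is even, $(-1)^{N}=1$, and
\[
q_0^{-A_m}q_d^{-B_m}=(q_0q_{2m})^{-N/2}=K_q^{-N/2}.
\]
Substituting these into \eqref{eq:sector-duality} with $d-k=k=m$ gives \eqref{eq:balanced-self-reciprocal} directly.

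\textbf{Coefficient symmetry.} Write $\chi_{q,m}(t)=\sum_j c_jt^j$. Then
\[
t^N\chi_{q,m}(K_q/t)=\sum_j c_jK_q^{j}t^{N-j}.
\]
Matching the coefficient of $t^{N-j}$ on both sides of \eqref{eq:balanced-self-reciprocal} gives
\[
c_{N-j}=K_q^{j-N/2}c_j,
\]
which is \eqref{eq:balanced-coefficient-symmetry} after inverting the power of $K_q$. Here we use that $K_q\ne0$.

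As a consistency check, $j=N$ gives $c_0=K_q^{N/2}$. This agrees with \eqref{eq:product-sector-roots}, because the sign $(-1)^N=1$ and the product of the roots is $q_0^{A_m}q_d^{B_m}=K_q^{N/2}$.

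\textbf{Main obstacle.} There is no real obstacle. The only points needing care are the parity argument for $N$ and the degenerate case $m=0$. In that case $d=0$, which contradicts the standing setup only if $d\ge1$ is implicit; if $m=0$ is allowed, then $N=1$ is odd, so the hypothesis $m\ge1$ should be stated.
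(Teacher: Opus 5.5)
Your proposal is correct and is the paper's proof: note $N=2\binom{2m-1}{m-1}$ and $A_m=B_m=N/2$, substitute into Theorem~\ref{thm:sector-duality}, and compare coefficients of $t^{N-j}$. Your caveat that $m\ge1$ is needed is well taken, because for $m=0$ one gets $N=1$ and the parity claim fails; the paper leaves this assumption implicit.
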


\begin{proof}
For $d=2m$ and $k=m$,
\[
 A_m=B_m=\frac N2,
\]
and $N=2\binom{2m-1}{m-1}$ is even.  Substitution into
Theorem~\ref{thm:sector-duality} gives
\eqref{eq:balanced-self-reciprocal}; comparing coefficients gives
\eqref{eq:balanced-coefficient-symmetry}.
\end{proof}

\begin{corollary}[Half-profile reconstruction]\label{cor:half-profile}
The full graded profile $\cR(q)$ is determined by the components
\[
 \chi_{q,k},
 \qquad
 0\le k\le\left\lfloor\frac d2\right\rfloor.
\]
The degree-weighted size of these independently stored components is
\begin{equation}\label{eq:half-profile-volume}
 H_d\defeq
 \sum_{k=0}^{\lfloor d/2\rfloor}\binom dk
 =
 \begin{cases}
  2^{d-1}, & d\text{ odd},\\[1ex]
  2^{d-1}+\dfrac12\binom d{d/2}, & d\text{ even}.
 \end{cases}
\end{equation}
In particular, $H_d/2^d\to1/2$ as $d\to\infty$.
\end{corollary}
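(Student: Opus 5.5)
The plan has two parts: a reconstruction claim that comes directly from Theorem~\ref{thm:sector-duality}, and a binomial count for $H_d$.

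\emph{Reconstruction.} Fix $k$ with $\lfloor d/2\rfloor<k\le d$ and put $k'\defeq d-k$, so that $0\le k'<\lceil d/2\rceil$, and in particular $k'\le\lfloor d/2\rfloor$. Apply Theorem~\ref{thm:sector-duality} with $k'$ in place of $k$. This expresses $\chi_{q,k}=\chi_{q,d-k'}$ as
\[
 (-1)^{N_{k'}}q_0^{-A_{k'}}q_d^{-B_{k'}}\,t^{N_{k'}}\chi_{q,k'}(K_q/t).
\]
The right-hand side needs only the stored component $\chi_{q,k'}$ and the scalars $q_0$ and $q_d$. These scalars are themselves stored, because $\chi_{q,0}(t)=t-q_d$ by \eqref{eq:extreme-profiles}. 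The remaining question is whether $q_0$ can also be read off. If $d\ge1$, the constant term of $\chi_{q,1}$ gives it; alternatively, one can treat the pair $(q_0,q_d)$ as part of the data. Either way, the case $k=d$ is consistent with \eqref{eq:extreme-profiles}. So every component of $\cR(q)$ is recovered, and this holds under the same nongeneric specializations as Theorem~\ref{thm:sector-duality}.

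\emph{Counting.} Each $\chi_{q,k}$ has degree $\binom dk$, so $H_d=\sum_{k\le\lfloor d/2\rfloor}\binom dk$. The symmetry $\binom dk=\binom d{d-k}$ together with $\sum_k\binom dk=2^d$ gives the two cases.
\begin{itemize}
\item For $d$ odd, the indices split into two halves of equal total weight, so $H_d=2^{d-1}$.
\item For $d$ even, the middle term $\binom d{d/2}$ is counted only once in the full sum. Hence $2H_d=2^d+\binom d{d/2}$, which gives the stated formula.
\end{itemize}

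\emph{Asymptotics.} It remains to show $\binom d{d/2}/2^d\to0$. I would use the elementary bound $\binom{2m}{m}\le 4^m/\sqrt{2m+1}$, which follows by induction on $m$ from the ratio $\binom{2m+2}{m+1}/\binom{2m}{m}=(4m+2)/(m+1)$. Stirling's formula would also do. This gives $H_d/2^d\to1/2$.

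The main obstacle is small: making sure that the scalars $q_0$ and $q_d$ needed in the duality are genuinely available from the stored half-profile. The extreme components in \eqref{eq:extreme-profiles} supply $q_d$ directly, and the $k=1$ component (or the pair $(q_0,q_d)$ as data) supplies $q_0$. Everything else is routine.
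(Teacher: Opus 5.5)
Your proposal is correct and takes the same route as the paper: the paper's proof is just the one-line appeal to Theorem~\ref{thm:sector-duality} for the components above the middle, plus the symmetry $\binom dk=\binom d{d-k}$. You additionally supply the binomial count, the limit $\binom d{d/2}/2^d\to0$, and the scalar bookkeeping.

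One small correction to that bookkeeping. $\chi_{q,1}$ is among the stored components only when $d\ge2$, and then its constant term $(-1)^dq_d^{d-1}q_0$ does yield $q_0$. For $d=1$ the stored half is just $\chi_{q,0}=t-q_1$, which does not determine $\chi_{q,1}=t-q_0$. So in that case your fallback of treating $q_0$ (or $K_q$) as side data is genuinely necessary, a point the paper's proof also passes over.
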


\begin{proof}
Theorem~\ref{thm:sector-duality} reconstructs every component above the middle
from its complementary component below the middle by coefficient reversal and
scaling.  Formula~\eqref{eq:half-profile-volume} is the usual symmetry of the
binomial coefficients about $d/2$.
\end{proof}

\subsection{Composed products}

For monic polynomials $f$ and $g$ of degrees $m$ and $n$, with roots
$\alpha_1,\ldots,\alpha_m$ and $\beta_1,\ldots,\beta_n$, their
\emph{composed product} is defined by
\begin{equation}\label{eq:boxtimes}
 (f\boxtimes g)(t)
 \defeq
 \prod_{i=1}^m\prod_{j=1}^n(t-\alpha_i\beta_j).
\end{equation}
This operation is associative and commutative at the level of root multisets.
It can be computed without adjoining the roots.  Let
$\operatorname{Res}_x(F,G)$ denote the polynomial resultant of $F$ and $G$
with respect to $x$.  Then
\begin{equation}\label{eq:resultant-boxtimes}
 (f\boxtimes g)(t)
 =\operatorname{Res}_x\!\left(f(x),x^n g(t/x)\right).
\end{equation}
The resultant formula is conceptual rather than the only way to compute the
operation.  Bostan, Flajolet, Salvy, and Schost give fast root-free algorithms
for composed products, including a characteristic-zero algorithm of
quasi-linear cost in the degree of the resulting polynomial
\cite{BostanFlajoletSalvySchost2006}.

For a product decomposition $q\defeq q_1q_2$ and a fixed total exterior degree $k$, the
\emph{$i$th graded sector} is the contribution obtained by choosing $i$ roots
from $q_1$ and $k-i$ roots from $q_2$.  The next theorem says
that the full recurrence polynomial is the product of the sector
polynomials.

\begin{theorem}[Graded multiplication law]\label{thm:graded-product}
Let $q_1,q_2\in K[z]$ have nonzero constant and leading coefficients.  Set
$d_1\defeq\deg q_1$, $d_2\defeq\deg q_2$, and $q\defeq q_1q_2$.  Then, for every
$0\le k\le d_1+d_2$,
\begin{equation}\label{eq:graded-product}
 \chi_{q,k}(t)
 =
 \prod_{i=\max(0,k-d_2)}^{\min(d_1,k)}
 \left(
   \chi_{q_1,i}\boxtimes\chi_{q_2,k-i}
 \right)(t).
\end{equation}
In particular,
\begin{equation}\label{eq:degree-vandermonde}
 \deg\chi_{q,k}
 =\sum_{i+j=k}\binom{d_1}{i}\binom{d_2}{j}
 =\binom{d_1+d_2}{k}.
\end{equation}
\end{theorem}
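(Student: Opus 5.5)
The plan is to work directly at the level of root multisets, using the definition \eqref{eq:chi-qk}. Pass to an algebraic closure. Write the roots of $q_1$ as $\alpha_1,\ldots,\alpha_{d_1}$ and those of $q_2$ as $\beta_1,\ldots,\beta_{d_2}$. Then the roots of $q=q_1q_2$, with multiplicity, are the concatenated list $\rho=(\alpha_1,\ldots,\alpha_{d_1},\beta_1,\ldots,\beta_{d_2})$. The leading coefficients satisfy $q_d=q_{1,d_1}q_{2,d_2}$, where $q_{1,d_1}$ and $q_{2,d_2}$ denote the leading coefficients of $q_1$ and $q_2$. Both sides of \eqref{eq:graded-product} are monic, so it suffices to show that their root multisets agree.

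First I would split the index set $[d_1+d_2]$ into the first $d_1$ positions and the last $d_2$ positions. A $k$-subset $I$ then corresponds bijectively to a pair $(I_1,I_2)$, where $I_1\subseteq[d_1]$ has $|I_1|=i$ and $I_2\subseteq[d_2]$ has $|I_2|=k-i$. Here $i$ ranges over $\max(0,k-d_2)\le i\le\min(d_1,k)$. The mode indexed by $I$ factors as
\[
 (-1)^kq_d\prod_{j\in I}\rho_j
 =\Bigl((-1)^iq_{1,d_1}\prod_{j\in I_1}\alpha_j\Bigr)\Bigl((-1)^{k-i}q_{2,d_2}\prod_{j\in I_2}\beta_j\Bigr),
\]
because the signs $(-1)^i(-1)^{k-i}=(-1)^k$ and the leading coefficients both multiply correctly. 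The first factor is a root of $\chi_{q_1,i}$ and the second is a root of $\chi_{q_2,k-i}$. As $(I_1,I_2)$ ranges over all pairs with fixed $i$, these products run over the full multiset of pairwise products of roots of $\chi_{q_1,i}$ and $\chi_{q_2,k-i}$. By \eqref{eq:boxtimes}, that multiset is exactly the root multiset of $\chi_{q_1,i}\boxtimes\chi_{q_2,k-i}$.

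Taking the product over $i$ then matches the factors of $\chi_{q,k}$ one-for-one with multiplicity, which gives \eqref{eq:graded-product}. The degree count \eqref{eq:degree-vandermonde} follows because the composed product has degree $\binom{d_1}{i}\binom{d_2}{k-i}$, and summing these over $i$ is the Vandermonde identity.

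The only step needing care is the normalization, namely that the constants $(-1)^kq_d$ split multiplicatively between the two factors. This is immediate from $q_d=q_{1,d_1}q_{2,d_2}$. I would also remark that no genericity is needed: the argument uses multisets counted with multiplicity, so repeated roots and coincident modes cause no trouble. Finally, since both sides have coefficients symmetric in the roots, the identity descends from the algebraic closure to $K$.
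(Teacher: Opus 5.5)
Your proposal is correct and takes essentially the same approach as the paper. Both split a $k$-subset of the disjointly labelled roots of $q_1q_2$ by how many roots come from each factor, and factor the normalized mode using $(-1)^k=(-1)^i(-1)^{k-i}$ and multiplicativity of the leading coefficients. Both then identify each fixed-$i$ sector with $\chi_{q_1,i}\boxtimes\chi_{q_2,k-i}$ and obtain the degree count from Vandermonde's identity.
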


\begin{proof}
Write the roots of $q_1$ as $\alpha_1,\ldots,\alpha_{d_1}$ and those of
$q_2$ as $\beta_1,\ldots,\beta_{d_2}$.  A recurrence mode of
$\chi_{q,k}$ is obtained by choosing a $k$-element submultiset from the
disjointly labelled union of the $\alpha$- and $\beta$-roots.  Fix such a
choice containing $i$ roots from $q_1$ and $j$ roots from $q_2$, where
$j\defeq k-i$.  The corresponding mode is
\[
 (-1)^k(q_1)_{d_1}(q_2)_{d_2}
 \prod_{\alpha\in I}\alpha
 \prod_{\beta\in J}\beta.
\]
Since $k=i+j$, this factors as
\[
 \left((-1)^i(q_1)_{d_1}\prod_{\alpha\in I}\alpha\right)
 \left((-1)^j(q_2)_{d_2}\prod_{\beta\in J}\beta\right).
\]
Thus the sector with fixed $(i,j)$ has recurrence polynomial
$\chi_{q_1,i}\boxtimes\chi_{q_2,j}$.  The possible values of $i$ partition
all $k$-subsets, proving \eqref{eq:graded-product}.  Taking degrees gives
Vandermonde's identity \eqref{eq:degree-vandermonde}.
\end{proof}

\begin{remark}[Companion-matrix decomposition]\label{rem:transfer-decomp}
With $C_q$ as defined above, view it as a linear map on
$V_q\defeq K^d$.  For the two factors, write $V_1\defeq K^{d_1}$ and
$V_2\defeq K^{d_2}$.  When $q_1$ and $q_2$ are coprime, the Chinese remainder
theorem identifies the space for the product companion matrix with the direct
sum $V_1\oplus V_2$.  Hence
\[
 \bigwedge^k(V_1\oplus V_2)
 \simeq
 \bigoplus_{i+j=k}
 \bigl(\bigwedge^iV_1\otimes\bigwedge^jV_2\bigr).
\]
With the normalization
$(-1)^k q_d\bigwedge^kC_q$ from the companion paper, the scalar factor on
each summand is exactly the product of the scalar factors of the two smaller
induced matrices on the two exterior powers.  Thus
\eqref{eq:graded-product} is the recurrence-polynomial shadow of an
actual block decomposition.  Coprimality is needed
for this direct-sum similarity statement, but not for the root-multiset
identity \eqref{eq:graded-product}.
\end{remark}

\section{Products of finite Toeplitz sections}\label{sec:finite-products}

We now turn to the matrix side.  Consider
\[
 a(z)\defeq\sum_{\nu=-r_a}^{s_a}a_\nu z^\nu,
 \qquad
 b(z)\defeq\sum_{\nu=-r_b}^{s_b}b_\nu z^\nu,
\]
with support intervals $[-r_a,s_a]$ and $[-r_b,s_b]$, respectively; the
four endpoint coefficients are nonzero, so both support intervals are exact.
Set $c(z)\defeq a(z)b(z)$ and
\[
 A_n\defeq\T_n(a),\qquad B_n\defeq\T_n(b),\qquad C_n\defeq\T_n(c).
\]
We call $C_n-A_nB_n$ the \emph{finite-section defect}.  The calculation below
shows that it splits into two corrections supported at opposite corners.

\subsection{Exact corner-defect formula}

For $1\le i,j\le n$,
\begin{align}
 (C_n-A_nB_n)_{ij}
 &={}
 \sum_{k\le0}a_{k-i}b_{j-k}
 +\sum_{k\ge n+1}a_{k-i}b_{j-k}.
 \label{eq:missing-indices}
\end{align}
The first sum is supported in the first $r_a$ rows and first $s_b$ columns;
the second is supported in the last $s_a$ rows and last $r_b$ columns.

Set
\begin{equation}\label{eq:hLhR}
 h_L\defeq\min(r_a,s_b),
 \qquad
 h_R\defeq\min(s_a,r_b).
\end{equation}
For the left corner, set
\[
 (H_a^-)_{i\ell}\defeq a_{-(i+\ell-1)},
 \qquad
 (H_b^+)_{\ell j}\defeq b_{j+\ell-1}.
\]
Here $1\le i\le r_a$, $1\le j\le s_b$, and $1\le\ell\le h_L$;
entries outside the exact support are interpreted as zero.  Then the left
corner block is $H_a^-H_b^+$.  Similarly, after reversing the last rows and
columns, the right corner block is $H_a^+H_b^-$ with
\[
 (H_a^+)_{i\ell}\defeq a_{i+\ell-1},
 \qquad
 (H_b^-)_{\ell j}\defeq b_{-(j+\ell-1)},
\]
for $1\le i\le s_a$, $1\le j\le r_b$, and
$1\le\ell\le h_R$.

\begin{proposition}[Finite-section multiplication]\label{prop:finite-product}
For every $n$,
\begin{equation}\label{eq:finite-product}
 A_nB_n=C_n-L_n-R_n,
\end{equation}
where $L_n$ is supported in the upper-left $r_a\times s_b$ corner and $R_n$
in the lower-right $s_a\times r_b$ corner.  For all sufficiently large $n$,
\begin{equation}\label{eq:corner-ranks}
 \rank L_n=h_L,
 \qquad
 \rank R_n=h_R.
\end{equation}
If
\[
 n\ge\max(r_a+s_a,r_b+s_b),
\]
the two supports are disjoint and
\begin{equation}\label{eq:total-defect-rank}
 \rank(C_n-A_nB_n)=h_L+h_R.
\end{equation}
\end{proposition}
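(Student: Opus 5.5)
I will start from the entrywise identity \eqref{eq:missing-indices}. Since $c_{j-i}=\sum_{k\in\mathbb Z}a_{k-i}b_{j-k}$ and $(A_nB_n)_{ij}$ keeps only $1\le k\le n$, the defect $C_n-A_nB_n$ is the sum over $k\le0$ and $k\ge n+1$. I define $L_n$ as the first sum and $R_n$ as the second.

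For the supports, I will locate where each sum can be nonzero. In the first sum, $a_{k-i}\ne0$ with $k\le0$ forces $k-i\ge-r_a$, so $i\le r_a$. Likewise $b_{j-k}\ne0$ forces $j-k\le s_b$, so $j\le s_b$. The second sum is handled symmetrically and gives $n-i+1\le s_a$ and $n-j+1\le r_b$.

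Next I factor each corner block. Substituting $\ell=1-k\ge1$ gives $(L_n)_{ij}=\sum_{\ell\ge1}a_{-(i+\ell-1)}b_{j+\ell-1}$. The factor $a_{-(i+\ell-1)}$ vanishes unless $\ell\le r_a$, and $b_{j+\ell-1}$ vanishes unless $\ell\le s_b$. So the sum truncates at $\ell\le h_L$, and the nonzero block is exactly $H_a^-H_b^+$. This block does not depend on $n$ once $n\ge\max(r_a,s_b)$. Reversing the last rows and columns and setting $\ell=k-n$ shows in the same way that $R_n$ equals $H_a^+H_b^-$.

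The rank claim is the main step. I will show that both factors of $H_a^-H_b^+$ have full rank $h_L$; the rank of the product is then $h_L$, because a full-column-rank left factor and a full-row-rank right factor preserve rank. Suppose $h_L=r_a\le s_b$. Then $H_a^-$ is the $r_a\times r_a$ matrix with entries $a_{-(i+\ell-1)}$. It is a Hankel matrix whose anti-diagonal entries all equal $a_{-r_a}\ne0$, and it is zero below that anti-diagonal, since $i+\ell-1>r_a$ gives zero. Its determinant is therefore $\pm a_{-r_a}^{r_a}\ne0$. For $H_b^+$ ($r_a\times s_b$), I take the last $r_a$ columns $j=s_b-r_a+1,\dots,s_b$. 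Entry $(\ell,j)$ is $b_{j+\ell-1}$, which is $b_{s_b}\ne0$ on the anti-diagonal $j+\ell-1=s_b$ and zero beyond it. This gives a nonsingular $r_a\times r_a$ minor, so the rank is $r_a$. The case $h_L=s_b$ is symmetric: $H_b^+$ is square triangular-Hankel with anti-diagonal $b_{s_b}$, and $H_a^-$ contains a nonsingular anti-triangular minor built on $a_{-r_a}$. The right corner is the same argument using $a_{s_a}$ and $b_{-r_b}$. This establishes \eqref{eq:corner-ranks} for $n\ge\max(r_a,s_a,r_b,s_b)$. It is the only place where the nonvanishing of the endpoint coefficients is used.

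Finally, for \eqref{eq:total-defect-rank}: if $n\ge r_a+s_a$, the row sets $\{1,\dots,r_a\}$ and $\{n-s_a+1,\dots,n\}$ are disjoint. So $L_n$ and $R_n$ occupy disjoint rows, which makes their row spaces' supports complementary and the ranks additive. The column condition $n\ge r_b+s_b$ gives the same conclusion through columns, so either inequality suffices and the stated hypothesis is more than enough. Hence $\rank(C_n-A_nB_n)=\rank L_n+\rank R_n=h_L+h_R$.
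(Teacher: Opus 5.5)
Your setup, the truncation of the corner sums at $\ell\le h_L$, and the rank computation for each corner are the paper's argument, and they are correct. The rank computation uses anti-triangular $h_L\times h_L$ minors with anti-diagonal entries $a_{-r_a}$ and $b_{s_b}$ on the left, and $a_{s_a}$ and $b_{-r_b}$ on the right.

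The flaw is in the final step. Disjoint row sets $\{1,\dots,r_a\}$ and $\{n-s_a+1,\dots,n\}$ do not by themselves make the ranks add. When $L_n$ and $R_n$ occupy disjoint rows, you get $\rank(L_n+R_n)=\dim(\operatorname{row}L_n+\operatorname{row}R_n)$. This equals $\rank L_n+\rank R_n$ only if the two row spaces meet trivially. The rows of $L_n$ live on columns $1,\dots,s_b$ and those of $R_n$ on columns $n-r_b+1,\dots,n$, so trivial intersection is automatic only if also $n\ge r_b+s_b$.

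Your claim that ``either inequality suffices'' is therefore false. Take $a(z)=z^{-1}+a_0+z$, $b(z)=z^{-2}+b_0+z^2$ and $n=3$. Then $n\ge r_a+s_a=2$ and $n\ge\max(r_a,s_a,r_b,s_b)$. Here $L_3$ and $R_3$ are the matrix units at positions $(1,2)$ and $(3,2)$, each of rank $1=h_L=h_R$. Yet $C_3-A_3B_3$ has rank $1$, not $h_L+h_R=2$. Exchanging the roles, with $a(z)=z^{-2}+a_0+z^2$, $b(z)=z^{-1}+b_0+z$ and $n=3$, gives matrix units at $(2,1)$ and $(2,3)$ and refutes the column-only version.

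The repair uses the hypothesis exactly as stated. The condition $n\ge\max(r_a+s_a,r_b+s_b)$ makes both the rows \emph{and} the columns of the two corners disjoint. After permuting rows and columns, $L_n+R_n$ is then block diagonal with blocks $H_a^-H_b^+$ and $H_a^+H_b^-$, and the ranks add. This is how the paper's ``when the two corner supports are disjoint, their ranks add'' must be read. As the example shows, disjointness of the supports merely as sets of entries would not be enough.
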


\begin{proof}
Equation \eqref{eq:finite-product} is \eqref{eq:missing-indices}.  With
$k=1-\ell$ in the first sum, one gets
\[
 \sum_{\ell\ge1}a_{-(i+\ell-1)}b_{j+\ell-1},
\]
which is precisely the factorization $H_a^-H_b^+$.  Both factors have rank
$h_L$: suitable terminal row and column selections produce anti-triangular
$h_L\times h_L$ minors with anti-diagonal entries $a_{-r_a}$ and $b_{s_b}$.
Thus their product has rank $h_L$.  The right corner is identical after
reversing the last rows and columns, giving rank $h_R$.  When the two corner
supports are disjoint, their ranks add.
\end{proof}

\begin{remark}[Relation to Widom's product identity]
Equation~\eqref{eq:finite-product} is the finite-band specialization of
Widom's finite-section product identity, which expresses the discrepancy
between the product of two finite Toeplitz sections and the Toeplitz section
of the product symbol by two Hankel boundary terms \cite{Widom1976}; see also
\cite{BottcherSilbermann2006,Virtanen2022}.  Thus the existence of two
boundary corrections is classical.  Proposition~\ref{prop:finite-product}
records the exact rectangular supports and, for exact finite bands, their
ranks $h_L$ and $h_R$, because precisely this rank bookkeeping is what enters
the defect-index realization of the recurrence sectors.  Related algebras
of Toeplitz matrix sequences have been studied in \cite{SerraCapizzano2001}.
\end{remark}

\subsection{Defect-order filtration and graded sectors}\label{subsec:defect-filtration}

The corner-rank calculation also gives a direct matrix realization of the
graded sectors when the two factors are one-sided.  This realization differs
from recentering: instead of changing the Laurent placement of the factors, we
interpolate between their exact finite product and the clean Toeplitz section
and sort the determinant by the number of defect directions used.

We first record two technical tools.  The first says that coefficients created
by fixed corner corrections remain in the ordinary compound recurrence space;
the second separates its exterior-degree sectors by scaling one factor.

\begin{lemma}[Same-corner boundary coefficients and the compound transfer]\label{lem:fixed-boundary-compound}
Let
\[
 g(z)\defeq\sum_{\nu=-m}^{p}g_\nu z^\nu,
 \qquad g_{-m}g_p\ne0,
 \qquad q_g(z)\defeq z^m g(z),
\]
and put
\[
 \mathcal T_g\defeq(-1)^p g_p\,\bigwedge^p C_{q_g}.
\]
Suppose that each $E^{(\alpha)}_n$ is supported, for all sufficiently large
$n$, either in a fixed upper-left corner rectangle or in a fixed lower-right
corner rectangle, with entries independent of $n$ in the corresponding local
corner coordinates.  Then every coefficient of
\[
 \det\!\left(\T_n(g)+x_1E^{(1)}_n+\cdots+x_ME^{(M)}_n\right)
\]
is, up to a fixed shift of the size parameter, a scalar matrix coefficient of
a power of $\mathcal T_g$ over the rational function field in the band
coefficients.  Consequently each such coefficient sequence is annihilated by
$\operatorname{char}(\mathcal T_g)$, and the annihilation identity persists
under coefficient specialization.
\end{lemma}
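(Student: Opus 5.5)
We expand the determinant multilinearly and then read each resulting minor through a transfer-matrix description of the band.

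First, the determinant is multilinear in the rows. Write each corner correction as a fixed local block. Expanding in $x_1,\ldots,x_M$, every coefficient becomes a finite signed sum of determinants of the following type: $\T_n(g)$ with a fixed set of rows and columns near the upper-left corner and near the lower-right corner replaced by (or bordered with) fixed $n$-independent data. Equivalently, by a Laplace expansion along the first $R_1$ rows and the last $R_2$ rows, where $R_1,R_2$ bound the corner supports, each coefficient is a finite $K$-linear combination, with $n$-independent coefficients, of products of the form
\[
(\text{fixed corner minor})\cdot \det \T_n(g)[I,J],
\]
with a fixed Laplace sign. Here $\T_n(g)[I,J]$ is the minor obtained by deleting a fixed set of rows and columns in the upper-left window and a fixed set in the lower-right window. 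Since the windows are fixed in local coordinates, the pairs $(I,J)$ range over a finite set independent of $n$, once $n$ exceeds the sum of the window sizes.

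Second, we identify each such same-corner minor as a matrix coefficient of $\mathcal T_g^{\,n-c}$. This is the step I would import from the row-column state-module machinery of \cite{AlekseyevKhomovskyRecurrences2026}. There, $\det\T_n(g)$ is computed by a transfer process: one sweeps the band column by column, and the state records which of the $p$ upper-diagonal positions are currently "occupied," giving a $p$-subset of $m+p$ slots. This state space is $\bigwedge^p K^{m+p}$, and the one-step transfer is $\mathcal T_g=(-1)^pg_p\bigwedge^pC_{q_g}$ in a suitable basis. A minor with rows and columns deleted only within fixed windows at the two ends has the same bulk sweep. Only the initial state vector is altered, determined by the upper-left deletion pattern, and likewise the terminal covector, determined by the lower-right pattern. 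Both are still $p$-vectors, because same-corner deletions remove equally many rows and columns from each window; this balance is needed for the minor to be square and banded-compatible. Hence
\[
\det\T_n(g)[I,J]=u_{J}^{\top}\mathcal T_g^{\,n-c_{I,J}}v_{I},
\]
with $u,v$ and the shift $c_{I,J}$ independent of $n$, and with entries rational in the band coefficients, the denominators being powers of $g_p$ from the normalization of $C_{q_g}$. Cayley--Hamilton then gives annihilation by $\operatorname{char}(\mathcal T_g)$. The identity is polynomial (after clearing powers of $g_p$, and $g_p\ne0$), so it persists under specialization. A fixed shift common to a given coefficient is obtained by taking the maximum of the $c_{I,J}$ and absorbing the differences into $u$ and $v$.

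The main obstacle is the second step: justifying that a minor with boundary deletions equals an ordinary matrix coefficient of the \emph{same} compound $\mathcal T_g$ rather than a different-degree exterior power. The point to check is the balance just mentioned. Within one corner, the number of deleted rows equals the number of deleted columns, because the correction $E^{(\alpha)}_n$ lives in a single corner block and the Laplace terms pair rows and columns of that block. This keeps the exterior degree at $p$. I would verify it by writing the bulk part as a product of local column steps acting on boundary states, as in the companion paper. A cross-corner imbalance would shift the exterior degree, which is exactly the Laurent recentering excluded by the hypothesis. Once this is settled, the rest is bookkeeping.
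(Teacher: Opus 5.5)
Your proposal is correct and follows essentially the paper's route. You expand multilinearly into complementary minors of $\T_n(g)$ whose row and column deletions balance separately at each corner. You then identify these minors as matrix coefficients of powers of $\mathcal T_g$ via the companion paper's row-column transfer, which is similar to $\mathcal T_g$ over the rational function field. Cayley--Hamilton and persistence of a polynomial identity finish the argument, exactly as in the paper.

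There are two small slips, neither of which affects the argument:
\begin{itemize}
\item Per-corner balance is not what makes the minor square. Total balance suffices for that, as the cross-corner tridiagonal example in the appendix shows. Its real role is the one you state at the end: preventing a Laurent recentering.
\item Your claim that the only denominators are powers of $g_p$ is unsupported. It is also unnecessary, because the annihilation identity involves only $F_n$ and the coefficients of $\operatorname{char}(\mathcal T_g)$.
\end{itemize}
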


\begin{proof}
Determinant multilinearity expresses each coefficient as a fixed linear
combination of complementary Toeplitz minors.  Selecting $k$ entries from an
upper-left correction deletes $k$ rows and $k$ columns at the left boundary;
selecting $\ell$ entries from a lower-right correction deletes $\ell$ rows
and $\ell$ columns at the right boundary.  Thus every nonzero complementary
minor has zero row-column displacement separately at the two ends.  This
same-corner balance is the point that fails for arbitrary ``bounded boundary''
perturbations.

For such a balanced minor, finitely many Laplace steps consume the fixed left
boundary geometry without changing the Laurent placement of the remaining
Toeplitz bulk.  What remains is a fixed linear combination of normalized
right-boundary states of the standard row-column transfer of the companion
paper \cite{AlekseyevKhomovskyRecurrences2026}; Appendix~\ref{app:boundary-closure}
spells out this membership step and its limitation.  Hence the coefficient
sequence is a scalar matrix coefficient of the standard row-column transfer.
Over the rational function field, the companion paper identifies that transfer
with the normalized compound transfer $\mathcal T_g$ up to similarity.
Therefore the coefficient is a scalar matrix coefficient of
$\mathcal T_g^{\,n-n_0}$ for a fixed $n_0$.  Cayley--Hamilton gives the
annihilation statement.  Since that recurrence identity is polynomial in the
band and corner coefficients, it persists under specialization.
\end{proof}

For an indeterminate $u$, scale only the first factor by
\begin{equation}\label{eq:weight-dilation}
 a^{\langle u\rangle}(z)
 \defeq
 \sum_{\nu=-r_a}^{s_a}u^{s_a-\nu}a_\nu z^\nu
 =u^{s_a}a(z/u).
\end{equation}
Its polynomial core satisfies
\begin{equation}\label{eq:weight-core-dilation}
 q_a^{\langle u\rangle}(z)
 =u^{d_a}q_a(z/u),
 \qquad d_a\defeq r_a+s_a,
\end{equation}
so every root of $q_a$ is multiplied by $u$, while the leading coefficient
$a_{s_a}$ is unchanged.

For a nonzero rational function $f(u)$, write $\nu_0(f)$ for its order at
$u=0$ and
\[
 \deg_\infty f\defeq-\operatorname{ord}_{u=\infty}f.
\]
For a polynomial, $\deg_\infty$ is its ordinary degree.

\begin{lemma}[Exterior-degree weight separation]\label{lem:weight-separation}
Assume that $q_a$ and $q_b$ lie on the Zariski-open locus where they are
coprime, have simple roots, and all recurrence modes in the relevant exterior
degree are distinct.  Put $S\defeq s_a+s_b$, and under the splitting
$C_{q_aq_b}\sim C_{q_a}\oplus C_{q_b}$ write
\begin{equation}\label{eq:weight-sector-space}
 W_\delta
 \defeq
 \bigwedge^{s_a+\delta}V_a
 \otimes
 \bigwedge^{s_b-\delta}V_b.
\end{equation}
Let $X_n(u)$ be a scalar matrix coefficient of the normalized $S$th compound
transfer for $q_a^{\langle u\rangle}q_b$, decomposed into its components on
the spaces $W_\delta$.  Suppose that, for constants independent of $n$,
\begin{equation}\label{eq:weight-slope-bounds}
 \nu_0(X_n)\ge \alpha n-O(1),
 \qquad
 \deg_\infty X_n\le \beta n+O(1).
\end{equation}
Then a nonzero sector component can occur only for
\begin{equation}\label{eq:weight-window}
 \alpha\le s_a+\delta\le\beta.
\end{equation}
\end{lemma}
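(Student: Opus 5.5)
Since the sector components $W_\delta$ are the generalized eigenspaces of the normalized transfer for $q_a^{\langle u\rangle}q_b$, the plan is to write $X_n(u)$ explicitly as an exponential polynomial in $n$ whose modes carry explicit powers of $u$, and then read off the growth rates in $u$. On the stated Zariski-open locus, $q_a^{\langle u\rangle}$ and $q_b$ stay coprime with simple roots for generic $u$. Hence the normalized compound transfer is diagonalizable over $\overline{K(u)}$, and its eigenvectors are indexed by pairs $(I,J)$ with $I$ a subset of the roots of $q_a$, $J$ a subset of the roots of $q_b$, and $|I|+|J|=S$. By the dilation \eqref{eq:weight-core-dilation} and the computation in the proof of Theorem~\ref{thm:graded-product}, the eigenvalue on $(I,J)$ is
\[
 \lambda_{I,J}(u)=u^{|I|}\lambda_{I,J}(1),
\]
because every root of $q_a$ is multiplied by $u$ and the leading coefficient is unchanged. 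The sector $W_\delta$ is the span of the $(I,J)$ with $|I|=s_a+\delta$, so all its eigenvalues equal $u^{s_a+\delta}$ times a nonzero $u$-free constant.

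The main step is to write
\[
 X_n(u)=\sum_{I,J}c_{I,J}(u)\,\lambda_{I,J}(1)^n\,u^{|I|n},
\]
where the coefficients $c_{I,J}(u)$ come from pairing fixed vectors with eigenprojections. These coefficients are independent of $n$ and algebraic in $u$. Working over Puiseux series in $u$ at $0$ and at $\infty$, each $c_{I,J}$ has a finite valuation $v_0(I,J)$ at $0$ and a finite degree $v_\infty(I,J)$ at $\infty$, bounded independently of $n$. Group the terms by $m=|I|$. For fixed $m$, the bases $\lambda_{I,J}(1)$ are pairwise distinct by the distinct-modes hypothesis, so the functions $n\mapsto \lambda_{I,J}(1)^n$ are linearly independent. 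Consequently, if some sector component with $|I|=m$ is nonzero, then for infinitely many $n$ its contribution has order at $u=0$ equal to $mn+O(1)$ and degree at $u=\infty$ equal to $mn+O(1)$.

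To finish, compare slopes. Let $m_{\min}$ and $m_{\max}$ be the smallest and largest values of $|I|$ with a nonzero sector component. Terms with different $m$ have orders at $u=0$ separated by multiples of $n$, so for large $n$ they cannot cancel against the leading term. Therefore
\[
 \nu_0(X_n)=m_{\min}n+O(1)\quad\text{and}\quad \deg_\infty X_n=m_{\max}n+O(1)
\]
along a suitable infinite set of $n$. Combining with \eqref{eq:weight-slope-bounds} and letting $n\to\infty$ gives $\alpha\le m_{\min}$ and $m_{\max}\le\beta$. Since every nonzero component satisfies $m_{\min}\le m=s_a+\delta\le m_{\max}$, this is exactly \eqref{eq:weight-window}.

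I expect the main obstacle to be ruling out cancellation. Within a fixed $m$-group, several modes could in principle cancel their leading $u$-powers for particular $n$. I would handle this with the distinctness of the $\lambda_{I,J}(1)$ and a Vandermonde argument: the leading coefficient in $u$ of the $m$-group is $\sum_{(I,J)} c^{\mathrm{lead}}_{I,J}\lambda_{I,J}(1)^n$. This sum is a nonzero exponential polynomial in $n$, so it is nonzero for at least one $n$ in every window of consecutive $n$ of fixed length. That is enough for the slope comparison, because the bounds \eqref{eq:weight-slope-bounds} are assumed for all $n$.
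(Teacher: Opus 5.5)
Your proposal is correct and follows essentially the same argument as the paper: factor $u^{(s_a+\delta)n}$ out of each sector component, note that the remaining factor has orders at $0$ and $\infty$ bounded independently of $n$, and let the extreme weights dominate for large $n$. The only difference is granularity. You diagonalize and run a Vandermonde argument on the individual modes; in your cancellation step you should take the coefficient at the common top, respectively bottom, $u$-degree within each group. The paper instead observes that each rescaled sector component ranges in a fixed finite-dimensional space of rational functions, which gives the same bounds without needing diagonalizability.
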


\begin{proof}
The dilation \eqref{eq:weight-core-dilation} multiplies every recurrence mode
in $W_\delta$ by $u^{w_\delta}$, where
$w_\delta\defeq s_a+\delta$.  Hence, defining
\[
 Y_{\delta,n}(u)\defeq u^{-w_\delta n}X_{\delta,n}(u),
\]
the nonzero functions $Y_{\delta,n}$ range in a fixed
finite-dimensional space of rational functions.  Their orders at $0$ and
$\infty$ are therefore bounded independently of $n$.

Let $w_{\max}$ be the largest weight of a nonzero sector.  For infinitely many
$n$ its contribution is nonzero, and for all sufficiently large such $n$ no
smaller weight can cancel its highest $u$-degree.  Thus
$\deg_\infty X_n=w_{\max}n+O(1)$, so
$w_{\max}\le\beta$ by \eqref{eq:weight-slope-bounds}.  The same argument at
$u=0$ for the smallest nonzero weight gives $w_{\min}\ge\alpha$, proving
\eqref{eq:weight-window}.
\end{proof}

Let
\begin{equation}\label{eq:one-sided-factors}
 a(z)\defeq\sum_{i=0}^{r}a_{-i}z^{-i},
 \qquad
 b(z)\defeq\sum_{j=0}^{s}b_jz^j,
 \qquad
 a_{-r}a_0b_0b_s\ne0,
\end{equation}
with $r,s\ge1$.  Set
\[
 q_a(z)\defeq z^ra(z),
 \qquad q_b(z)\defeq b(z),
 \qquad c(z)\defeq a(z)b(z),
 \qquad h\defeq\min(r,s),
\]
and write
\[
 A_n\defeq\T_n(a),\qquad
 B_n\defeq\T_n(b),\qquad
 C_n\defeq\T_n(c),\qquad
 E_n\defeq C_n-A_nB_n.
\]
Here $A_n$ is lower triangular, $B_n$ is upper triangular, and
Proposition~\ref{prop:finite-product} leaves only the upper-left defect, of
rank $h$ for all sufficiently large $n$.  Introduce an indeterminate $x$ and
expand
\begin{equation}\label{eq:defect-interpolation}
 \mathcal F_n(x)
 \defeq
 \det(A_nB_n+xE_n)
 =\sum_{k=0}^{h}x^kF_{k,n}.
\end{equation}
Thus
\begin{equation}\label{eq:defect-interpolation-endpoints}
 F_{0,n}=(a_0b_0)^n,
 \qquad
 \sum_{k=0}^{h}F_{k,n}=D_n(c).
\end{equation}
For $0\le j\le h$ define the sector polynomial
\begin{equation}\label{eq:defect-sector-polynomial}
 \Psi_j(t)
 \defeq
 \bigl(\chi_{q_a,j}\boxtimes\chi_{q_b,s-j}\bigr)(t),
\end{equation}
and for $0\le k\le h$ put
\begin{equation}\label{eq:defect-cumulative-polynomial}
 \Phi_k(t)\defeq\prod_{j=0}^{k}\Psi_j(t).
\end{equation}
Theorem~\ref{thm:graded-product} gives
$\Phi_h=\chi_{q_aq_b,s}$.  Let $L$ denote the forward shift,
$(Lx)_n\defeq x_{n+1}$.

\begin{proposition}[Exterior realization of the defect filtration]
\label{prop:defect-exterior-filtration}
Assume temporarily that $q_a$ and $q_b$ are coprime, have simple roots,
and have pairwise distinct recurrence modes in the relevant exterior degree;
work over a splitting field.  Let $V_a\defeq K^r$ and $V_b\defeq K^s$,
with companion operators $C_a\defeq C_{q_a}$ and $C_b\defeq C_{q_b}$.  For
$0\le j\le h$ put
\begin{equation}\label{eq:defect-sector-space}
 W_j
 \defeq
 \bigwedge^jV_a\otimes\bigwedge^{s-j}V_b,
 \qquad
 \mathcal W_{\le k}\defeq\bigoplus_{j=0}^{k}W_j.
\end{equation}
Under the Chinese-remainder identification
$C_{q_aq_b}\sim C_a\oplus C_b$, the normalized exterior-power transfer
\begin{equation}\label{eq:defect-full-transfer}
 \mathcal T
 \defeq
 (-1)^sa_0b_s\,\bigwedge^s(C_a\oplus C_b)
\end{equation}
preserves every $\mathcal W_{\le k}$.  Its restriction to $W_j$ is similar to
\begin{equation}\label{eq:defect-sector-transfer}
 \mathcal S_j
 \defeq
 a_0b_0\,
 \bigl(\bigwedge^jC_a\bigr)
 \otimes
 \bigl(\bigwedge^jC_b^{-T}\bigr),
\end{equation}
and therefore has characteristic polynomial $\Psi_j$.

For every $0\le k\le h$, in the stable range of $n$, the sequence
$(F_{k,n})_n$ is a scalar matrix coefficient of the restricted transfer
$\mathcal T|_{\mathcal W_{\le k}}$.  Equivalently, it is a sum of scalar
matrix coefficients, one from each $W_j$ with $0\le j\le k$.  Thus the
standard filtration
\begin{equation}\label{eq:defect-standard-filtration}
 0\subseteq W_0\subseteq
 W_0\oplus W_1\subseteq\cdots\subseteq
 \bigoplus_{j=0}^{h}W_j
 =\bigwedge^s(V_a\oplus V_b)
\end{equation}
is the exterior-algebra realization of the defect-order filtration.
\end{proposition}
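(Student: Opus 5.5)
The plan has two parts. The statement about $\mathcal T$ is pure multilinear algebra. The statement about $F_{k,n}$ combines Lemma~\ref{lem:fixed-boundary-compound} with the weight separation of Lemma~\ref{lem:weight-separation}.

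\emph{Transfer and sector operators.} For $c=ab$ the semibandwidths are $r$ and $s$, the core is $q_aq_b$ of degree $r+s$, and the relevant exterior degree is $s$. The leading coefficient of $q_aq_b$ is $a_0b_s$, so \eqref{eq:defect-full-transfer} is the normalized transfer of Proposition~\ref{prop:background-recurrence}.
\begin{enumerate}[label=(\roman*)]
\item Since $C_a\oplus C_b$ preserves $V_a$ and $V_b$ separately, $\bigwedge^s(C_a\oplus C_b)$ is block diagonal with respect to $\bigwedge^s(V_a\oplus V_b)=\bigoplus_j W_j$. Hence every $W_j$, and a fortiori every $\mathcal W_{\le k}$, is invariant.
\item On $W_j$ the restriction is $(-1)^sa_0b_s\,\bigwedge^jC_a\otimes\bigwedge^{s-j}C_b$.
\item Use the Hodge-type pairing $\bigwedge^{s-j}V_b\cong(\bigwedge^jV_b)^*\otimes\bigwedge^sV_b$. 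It intertwines $\bigwedge^{s-j}C_b$ with $\det(C_b)\,(\bigwedge^jC_b)^{-T}$.
\item Since $\det C_b=(-1)^sb_0/b_s$, the scalar becomes $a_0b_0$. This gives the similarity to $\mathcal S_j$.
\end{enumerate}
For the characteristic polynomial, I will compare eigenvalues. Those of $\mathcal S_j$ are $a_0b_0\prod_{I}\alpha/\prod_J\beta$ with $|I|=|J|=j$. Using $\prod_{J^c}\beta=(-1)^sb_0/(b_s\prod_J\beta)$, they match the products of the roots $(-1)^ja_0\prod_I\alpha$ of $\chi_{q_a,j}$ and $(-1)^{s-j}b_s\prod_{J^c}\beta$ of $\chi_{q_b,s-j}$. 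The sign is $(-1)^{j+(s-j)+s}=1$. So the characteristic polynomial is $\Psi_j$.

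\emph{Coefficient sequences.} The matrix $A_nB_n+xE_n$ equals $\T_n(c)+(x-1)E_n$, and $E_n$ is a fixed upper-left corner correction. Lemma~\ref{lem:fixed-boundary-compound}, applied with indeterminate $x-1$ and re-expanded in $x$, shows that each $F_{k,n}$ is, in the stable range, a scalar matrix coefficient of a power of $\mathcal T$. Decomposing along $\bigoplus_jW_j$ on the generic locus, $F_{k,n}$ is a sum of scalar matrix coefficients $X_{j,n}$, one from each $W_j$. It remains to show that only $j\le k$ occurs.

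For this I replace $a$ by $a^{\langle u\rangle}$ from \eqref{eq:weight-dilation}. Here $s_a=0$, so $a_{-i}\mapsto u^ia_{-i}$ and the sector $W_j$ has weight $w=j$. The lower bound $\nu_0\ge0$ is automatic, because everything is polynomial in $u$. For the upper bound I write
\[
 \mathcal F_n(x)=\det A_n\det B_n\,\det\bigl(I+xB_n^{-1}A_n^{-1}E_n\bigr).
\]
Here $\det A_n\det B_n=(a_0b_0)^n$ does not depend on $u$. Then $F_{k,n}=(a_0b_0)^n\,e_k(B_n^{-1}A_n^{-1}E_n)$, and $e_k$ is a sum of principal $k\times k$ minors. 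Because $E_n=H_a^-H_b^+$ has rank $h$ and lives in the first $r$ rows, each such minor is a bounded combination of $k\times k$ minors built from fixed columns of $A_n^{-1}$ and $B_n^{-1}$.

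The entries of $A_n^{-1}$ are Taylor coefficients of $1/a^{\langle u\rangle}$. The coefficient of $z^{-m}$ is homogeneous of degree $m$ in $u$, and $m\le n$. The matrix $B_n^{-1}$ is $u$-free. Hence each minor has $u$-degree at most $kn+O(1)$, giving $\deg_\infty F_{k,n}(u)\le kn+O(1)$. Lemma~\ref{lem:weight-separation} with $\alpha=0$ and $\beta=k$ then kills every component with $j>k$. Specializing $u=1$ keeps each $X_{j,n}$ a matrix coefficient on $W_j$, since the sector decomposition is defined over $K(u)$. This identifies $F_{k,n}$ with a coefficient of $\mathcal T|_{\mathcal W_{\le k}}$, which is exactly the filtration \eqref{eq:defect-standard-filtration}.

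\emph{Main obstacle.} The hard step is making the degree bound $\deg_\infty F_{k,n}\le kn+O(1)$ rigorous. The $e_k$ expansion involves long columns of $A_n^{-1}$, so I must check that the boundary terms contributed by $H_a^-$ and $H_b^+$ add only $O(1)$ to the degree. I must also check that the $O(1)$ is uniform in $n$.

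A secondary point is to justify the sector-wise decomposition of $F_{k,n}$ over $K(u)$. This requires that the coprimality, simple-root and distinct-mode hypotheses persist for the dilated pair $(q_a^{\langle u\rangle},q_b)$ at generic $u$. That holds because they are Zariski-open conditions, which are nonempty at $u=1$.
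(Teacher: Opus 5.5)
Your proposal is correct. Its architecture matches the paper's: the same block-diagonal and Hodge argument for the sector transfers (you also check the eigenvalues directly rather than citing the graded multiplication law), the same appeal to Lemma~\ref{lem:fixed-boundary-compound}, and the same use of Lemma~\ref{lem:weight-separation} with window $0\le j\le k$. The genuine difference is how you obtain $\deg_\infty F^{\langle u\rangle}_{k,n}\le kn+O(1)$. The paper uses a weighted Cauchy--Binet expansion over the enlarged intermediate index set $J^-\cup I_n$ and reads the $u$-exponent $\Sigma H-\Sigma P_-$ off row and column scaling of bi-infinite Toeplitz minors. You instead factor out $\det(A_nB_n)=(a_0b_0)^n$ and bound $e_k\bigl((A_nB_n)^{-1}E_n\bigr)$, using that $A_n^{-1}$ is the triangular section of $1/a^{\langle u\rangle}$. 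The step you flag as the main obstacle is harmless. Because $E_n$ vanishes outside its first $s$ columns, $e_k$ only sees principal minors of the fixed $s\times s$ block $(B_n^{-1}A_n^{-1})[\{1,\ldots,s\},\{1,\ldots,r\}]\,H_a^-H_b^+$. Its entries have $u$-degree at most $(n-1)+r$, since $A_n^{-1}$ contributes at most $n-1$ and $H_a^-$ at most $r$. This gives the bound $k(n-1+r)$, uniform in $n$. Your route is more elementary and makes $\nu_0\ge0$ trivial. However, it relies on triangularity of $A_n$ and $B_n$: invertibility for every $n$, a $u$-free determinant, and a Toeplitz inverse given by Taylor coefficients. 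All of this is special to the one-sided case. The paper's weighted Cauchy--Binet carries over verbatim to Proposition~\ref{prop:two-boundary-exterior-filtration}. There the inverse has no such description, and the nontrivial lower bound $(s_a-\ell)n$ comes directly from the $-\ell n$ term in the exponent.
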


\begin{proof}
The usual exterior-power decomposition gives
\[
 \bigwedge^s(V_a\oplus V_b)
 \simeq
 \bigoplus_{j=0}^{h}
 \left(\bigwedge^jV_a\otimes\bigwedge^{s-j}V_b\right).
\]
For the $V_b$ factor, Hodge duality identifies
\begin{equation}\label{eq:defect-hodge}
 \bigwedge^{s-j}C_b
 \sim
 (\det C_b)\,\bigwedge^jC_b^{-T}.
\end{equation}
Since $\det C_b=(-1)^sb_0/b_s$, multiplying by the normalization in
\eqref{eq:defect-full-transfer} gives precisely
\eqref{eq:defect-sector-transfer}.  Hence
$\operatorname{char}(\mathcal T|_{W_j})=\Psi_j$ by the graded multiplication
law.

It remains to locate the scalar coefficient $F_k$ in this decomposition.
Since
\[
 A_nB_n+xE_n=C_n-(1-x)E_n
\]
and $E_n$ is supported in a fixed corner window, determinant multilinearity
expresses $F_{k,n}$ as a fixed linear combination of boundary minors of
$C_n$.  Lemma~\ref{lem:fixed-boundary-compound} therefore places $F_k$ in the
full $s$th compound recurrence space
$\bigoplus_{j=0}^{h}W_j$.

We now use the dilation \eqref{eq:weight-dilation}.  In the present
one-sided situation $s_a=0$, so
\[
 a^{\langle u\rangle}(z)
 =\sum_{i=0}^{r}u^i a_{-i}z^{-i}.
\]
Let
\[
 I_n\defeq\{1,\ldots,n\},
 \qquad
 J^-\defeq\{1-h,\ldots,0\}.
\]
Weighted Cauchy--Binet for
$A_nB_n+xE_n$ gives
\begin{equation}\label{eq:one-sided-weighted-CB}
 F_{k,n}^{\langle u\rangle}
 =
 \sum_{\substack{K\subseteq J^-\cup I_n,\ |K|=n\\
                   |K\cap J^-|=k}}
 \det\mathcal T(a^{\langle u\rangle})[I_n,K]\,
 \det\mathcal T(b)[K,I_n].
\end{equation}
For every term,
\begin{equation}\label{eq:one-sided-minor-weight}
 \det\mathcal T(a^{\langle u\rangle})[I_n,K]
 =u^{\Sigma I_n-\Sigma K}
  \det\mathcal T(a)[I_n,K],
\end{equation}
where $\Sigma K$ denotes the sum of the elements of $K$.  Put
$P_-\defeq K\cap J^-$ and $H\defeq I_n\setminus K$.  Then
$|P_-|=|H|=k$ and
\[
 \Sigma I_n-\Sigma K=\Sigma H-\Sigma P_-.
\]
The set $P_-$ ranges in a fixed finite interval, while
$\Sigma H$ is between $O(1)$ and $kn+O(1)$.  Hence
\begin{equation}\label{eq:one-sided-weight-bounds}
 \nu_0(F_{k,n}^{\langle u\rangle})\ge-O(1),
 \qquad
 \deg_\infty(F_{k,n}^{\langle u\rangle})\le kn+O(1).
\end{equation}
By Lemma~\ref{lem:weight-separation}, a nonzero sector component can
therefore have only exterior-degree weight $j$ with $0\le j\le k$.  Thus
$F_{k,n}$ is a scalar matrix coefficient of
$\mathcal T|_{\mathcal W_{\le k}}$, as claimed.
\end{proof}

\begin{theorem}[Defect-order filtration]\label{thm:defect-order-filtration}
For every $0\le k\le h$, the polynomial $\Phi_k$ annihilates the sequence
$(F_{k,n})_n$.  For generic coefficients of the two factors, $\Phi_k$ is the
minimal recurrence polynomial of this sequence.  Consequently its generic
minimal recurrence order is
\begin{equation}\label{eq:defect-order-degree}
 \deg\Phi_k
 =\sum_{j=0}^{k}\binom rj\binom sj.
\end{equation}
Moreover, for $k\ge1$, the filtered sequence
\begin{equation}\label{eq:defect-associated-graded-sequence}
 \Phi_{k-1}(L)F_k
\end{equation}
has generic minimal recurrence polynomial $\Psi_k$.  Thus the successive
associated graded pieces of the defect-order filtration are exactly the graded
recurrence sectors.
\end{theorem}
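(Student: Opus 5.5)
The proof splits into an annihilation part and a genericity part; Proposition~\ref{prop:defect-exterior-filtration} already carries most of the annihilation part.

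\emph{Annihilation.} On the generic locus (coprime cores, simple roots, distinct modes), Proposition~\ref{prop:defect-exterior-filtration} shows that $F_{k,n}$ is a scalar matrix coefficient of $\mathcal T|_{\mathcal W_{\le k}}$ in the stable range. Its characteristic polynomial is $\prod_{j\le k}\Psi_j=\Phi_k$, so Cayley--Hamilton gives $\Phi_k(L)F_k=0$ for all large $n$. The coefficients of $\Phi_k$ are polynomial in the band coefficients, and so, by determinant multilinearity, are the terms $F_{k,n}$. The identity $\Phi_k(L)F_k=0$ therefore holds on a Zariski-dense set and extends to every specialization with $a_{-r}a_0b_0b_s\ne0$. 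The stable-range restriction only costs a fixed shift, which is handled by the same density argument.

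\emph{Reduction of the associated graded statement.} On the generic locus all the roots of $\Phi_k$ are simple. Write $F_{k,n}=\sum_{j\le k}\sum_{\lambda}c_{j,\lambda}(k)\lambda^n$, where $\lambda$ runs over the roots of $\Psi_j$. Then $\Phi_{k-1}(L)F_k$ kills every sector $j<k$ and leaves only the sector-$k$ terms, each multiplied by the nonzero factor $\Phi_{k-1}(\lambda)$. Hence $\Phi_{k-1}(L)F_k$ is annihilated by $\Psi_k$. Both minimality claims now reduce to a single statement: generically, every coefficient $c_{j,\lambda}(k)$ with $j\le k$ is nonzero.

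\emph{Main obstacle: nonvanishing of all mode coefficients.} Each $c_{j,\lambda}(k)$ is an algebraic function of the coefficients on a finite cover, and these coefficients are permuted by the monodromy of the roots. So it should suffice to exhibit, for each $j\le k$, one specialization where some mode coefficient of sector $j$ is nonzero. A vanishing locus is then proper, and the Galois orbit carries nonvanishing to every mode in $W_j$, which is irreducible under monodromy for the full symmetric group on each factor's roots.

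To detect sector $j$ I would use the weight $u$ from \eqref{eq:weight-dilation}. Sector $j$ contributes with weight exactly $u^{jn}$, so it suffices to show that the weighted Cauchy--Binet sum \eqref{eq:one-sided-weighted-CB} has a nonzero $u^{jn+O(1)}$ component for large $n$. In sector terms, the coefficient of top $u$-degree $kn$ isolates $W_k$. It comes from sets $H$ with $k$ elements near the bottom boundary $n$, paired with $P_-=J^-\cap K$ of size $k$. Its leading part factors into a boundary minor built from $a_{-r},\dots$ at both ends times a product of Toeplitz minors of $b$. Computing that leading term explicitly, as a product of a nonzero $k\times k$ anti-triangular minor (as in the rank argument of Proposition~\ref{prop:finite-product}) and the $(\,\cdot\,)^n$ bulk, shows that the $W_k$ component is nonzero. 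Lower sectors $j<k$ would be handled the same way after extracting the $u^{jn}$ slice, or more simply by induction on $k$ combined with the fact that the sector-$j$ parts of $F_k$ and $F_j$ are related by a fixed nonzero boundary factor. If the direct leading-term computation proves messy, the fallback is to specialize to a convenient degenerate-but-admissible example, such as $a=1+\epsilon z^{-1}+\dots$, and check nonvanishing to first order in $\epsilon$.

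With nonvanishing established, $\Phi_k$ is the minimal polynomial of $F_k$, with degree $\sum_{j\le k}\binom rj\binom sj$ by Theorem~\ref{thm:graded-product}, and $\Psi_k$ is the minimal polynomial of $\Phi_{k-1}(L)F_k$.
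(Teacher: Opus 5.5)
Your annihilation step is correct and agrees with the paper. So is your reduction of both minimality claims to the generic nonvanishing of every amplitude $c_{j,\lambda}(k)$ with $j\le k$, and the permutation-symmetry step reducing this to one nonzero amplitude per sector is sound. The gap is the nonvanishing itself.

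The dilation $u$ separates only the two extreme sectors. For large $n$, the coefficients of bounded $u$-degree come only from $W_0$, and those of degree $kn-O(1)$ come only from $W_k$. For $0<j<k$ there is no $u^{jn}$ slice. A sector component has the form $u^{jn}Y_{j,n}(u)$ with $Y_{j,n}$ genuinely rational in $u$, because the amplitudes depend on the dilated roots. Lower sectors therefore spill into all higher $u$-degrees.

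Already for $r=s=1$ you can see this. With $\mu_1=a_0b_0$ and $\mu_2=a_{-1}b_1$, one has $F^{\langle u\rangle}_{1,n}=\sum_{m=1}^{n}\mu_1^{n-m}(u\mu_2)^m$. Its $W_0$-part, $\frac{u\mu_2}{\mu_1-u\mu_2}\,\mu_1^n$, has nonzero Taylor coefficients in every degree $m\ge1$.

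Your inductive alternative would need the amplitude vectors $(c_{j,\lambda}(k))_\lambda$ and $(c_{j,\lambda}(j))_\lambda$ on the $\binom rj\binom sj$-dimensional space $W_j$ to be proportional with a nonzero factor. Nothing in your argument supplies this. The $\epsilon$-fallback is not concrete enough to check.

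Even the top-sector computation is off. The extremal Cauchy--Binet term is a product of two column-shifted minors of $\mathcal T(a)$ and $\mathcal T(b)$, not an anti-triangular block times a pure $n$th power. For $h=k$ it is exactly $D_n(z^ka)\,D_n(z^{-k}b)$, which carries $\binom rk\binom sk$ modes.

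The missing idea is a degeneration that hands all sectors at once to the generic minimality of a clean Toeplitz determinant.

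\begin{enumerate}
\item In root coordinates, fix a mode $\lambda_{I,J}$ with $|I|=|J|=j\le k$. Choose $R\supseteq I$ and $S\supseteq J$ of size $k$, and send the roots of $a$ outside $R$ and the roots of $b$ outside $S$ to $0$.
\item The supports shrink to $[-k,0]$ and $[0,k]$, and the defect becomes a full-rank $k\times k$ corner block. The $x^k$ coefficient must select all of it, so $F^*_{k,n}=(a^*_{-k}b^*_k)^kD_{n-k}(a^*b^*)$.
\item By Proposition~\ref{prop:background-recurrence} and Theorem~\ref{thm:graded-product}, the minimal polynomial $\Phi^*_k$ of this sequence has every nonzero specialized mode, from all sectors $0,\dots,k$, as a simple root.
\item If $\lambda$ were generically absent, $\Phi_k/(t-\lambda)$ would annihilate $F_k$. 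It would specialize to $t^M\Phi^*_k/(t-\lambda^*)$ with $\lambda^*\ne0$, which cannot annihilate $F^*_k$.
\end{enumerate}

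This argument treats each mode individually, so your monodromy step is not even needed.
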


\begin{proof}
On the coprime simple-root locus,
Proposition~\ref{prop:defect-exterior-filtration} and Cayley--Hamilton give
\begin{equation}\label{eq:defect-filtration-annihilation}
 \Phi_k(L)F_k=0,
\end{equation}
because
\[
\operatorname{char}(\mathcal T|_{\mathcal W_{\le k}})
 =\prod_{j=0}^{k}\Psi_j
 =\Phi_k.
\]
The recurrence identity is algebraic in the coefficients of the two factors,
so it extends from this Zariski-dense locus to every exact one-sided pair in
\eqref{eq:one-sided-factors}.

It remains to prove generic sharpness.  Introduce independent root coordinates
\begin{equation}\label{eq:defect-root-coordinates}
 a(z)\defeq a_0\prod_{\alpha=1}^{r}(1-\rho_\alpha z^{-1}),
 \qquad
 b(z)\defeq b_0\prod_{\beta=1}^{s}(1-\tau_\beta z).
\end{equation}
For $|I|=|J|=j$, the corresponding mode of $\Psi_j$ is
\begin{equation}\label{eq:defect-mode-base}
 \lambda_{I,J}
 \defeq
 a_0b_0
 \prod_{\alpha\in I}\rho_\alpha
 \prod_{\beta\in J}\tau_\beta.
\end{equation}
Indeed, the degree-$j$ $q_a$ mode is
$(-1)^ja_0\prod_{\alpha\in I}\rho_\alpha$, while indexing a
degree-$(s-j)$ $q_b$ mode by the complement of $J$ gives
$(-1)^jb_0\prod_{\beta\in J}\tau_\beta$.  In the independent root
coordinates all modes \eqref{eq:defect-mode-base} are distinct monomials.

Fix one such mode with $j\le k$ and write
\begin{equation}\label{eq:defect-fixed-mode}
 \lambda\defeq\lambda_{I,J}.
\end{equation}
Suppose that this mode were absent generically from $F_k$.  Since $\Phi_k$ is
squarefree over the rational function field in the root coordinates, the
proper divisor
\begin{equation}\label{eq:defect-mode-removed}
 P_\lambda(t)\defeq\frac{\Phi_k(t)}{t-\lambda}
\end{equation}
would also annihilate $F_k$.

For $k=0$ there is nothing to prove, so assume $k\ge1$.  Choose sets
$R\subseteq[r]$ and $S\subseteq[s]$ of cardinality $k$ with $I\subseteq R$
and $J\subseteq S$, and specialize
\begin{equation}\label{eq:defect-degree-reduction}
 \rho_\alpha=0\quad(\alpha\notin R),
 \qquad
 \tau_\beta=0\quad(\beta\notin S).
\end{equation}
The actual supports of the specialized factors shrink to $[-k,0]$ and
$[0,k]$; denote the resulting symbols by $a^*$ and $b^*$ and put
$c^*\defeq a^*b^*$.  Their defect is supported in the upper-left
$k\times k$ corner and has rank $k$.  The coefficient of $x^k$ in
\eqref{eq:defect-interpolation} therefore selects the whole defect block and
leaves the trailing clean Toeplitz section.  Since the two Hankel factors in the proof of
Proposition~\ref{prop:finite-product} are anti-triangular at full size,
\begin{equation}\label{eq:defect-top-layer-specialization}
 F^*_{k,n}
 =(a^*_{-k}b^*_k)^kD_{n-k}(c^*),
 \qquad n\ge k.
\end{equation}
The scalar in front is nonzero.

The remaining $2k$ roots are generic.  Proposition
\ref{prop:background-recurrence} and Theorem~\ref{thm:graded-product} therefore
show that the minimal recurrence polynomial of $D_{n-k}(c^*)$, and hence of
$F^*_{k,n}$, has the nonzero roots
\begin{equation}\label{eq:defect-specialized-modes}
 a_0b_0
 \prod_{\alpha\in I'}\rho_\alpha
 \prod_{\beta\in J'}\tau_\beta,
 \qquad
 I'\subseteq R,\quad J'\subseteq S,
 \quad |I'|=|J'|\le k.
\end{equation}
Call this minimal polynomial $\Phi_k^*(t)$.  Under the specialization
\eqref{eq:defect-degree-reduction}, every mode of $\Phi_k$ that uses a root
outside $R$ or $S$ becomes zero, while the nonzero modes are exactly
\eqref{eq:defect-specialized-modes}.  Hence for some $M\ge0$,
\begin{equation}\label{eq:defect-specialized-removed}
 P_\lambda^*(t)
 =t^M\frac{\Phi_k^*(t)}{t-\lambda}.
\end{equation}
Since $\Phi_k^*(0)\ne0$, the polynomial $\Phi_k^*$ does not divide
$P_\lambda^*$.  Thus $P_\lambda^*$ cannot annihilate $F_k^*$, contradicting
the specialization of the assumed identity $P_\lambda(L)F_k=0$.  Every mode
of $\Phi_k$ therefore occurs generically in $F_k$, proving that $\Phi_k$ is
the generic minimal recurrence polynomial.  Taking degrees gives
\eqref{eq:defect-order-degree}.

Finally, the filtration in Proposition~\ref{prop:defect-exterior-filtration}
has associated graded piece
\begin{equation}\label{eq:defect-associated-graded-space}
 \mathcal W_{\le k}/\mathcal W_{\le k-1}
 \simeq W_k,
\end{equation}
whose characteristic polynomial is $\Psi_k$.  Applying
$\Phi_{k-1}(L)$ to $F_k$ kills all lower-sector matrix coefficients.  Generic
sharpness shows that every mode in $W_k$ occurs and that none is killed by
$\Phi_{k-1}$.  Hence the filtered sequence has generic minimal recurrence
polynomial $\Psi_k$.
\end{proof}

\begin{remark}[Compound-map form of the graded pieces]
\label{rem:defect-compound-map}
Hodge duality also gives the intrinsic identification
\[
 W_j
 \simeq
 \operatorname{Hom}\!\left(\bigwedge^jV_b,\bigwedge^jV_a\right)
 \otimes\det(V_b).
\]
Thus the $j$th associated graded piece may be viewed as the space of $j$th
compound maps across the two one-sided factors.  In this language, defect
order at most $k$ retains compound orders $0,\ldots,k$, while passing to the
associated graded quotient isolates compound order $k$.
\end{remark}

\begin{remark}[Balanced defect orders]\label{rem:balanced-defect-orders}
When $r=s=h$, Theorem~\ref{thm:defect-order-filtration} gives
\[
 \operatorname{ord}_{\rm gen}(F_k)
 =\sum_{j=0}^{k}\binom hj^2.
\]
For $h=2$ these orders are $1,5,6$, while for $h=3$ they are
$1,10,19,20$.  At $k=h$, Vandermonde's identity gives the full generic order
$\binom{2h}{h}$.  Thus defect order defines a filtration of the full recurrence
space whose associated graded dimensions are the sector dimensions
$\binom hk^2$.
\end{remark}

\section{One-diagonal growth and the Pascal rule}\label{sec:one-diagonal}

The general corner-defect formula simplifies sharply when one factor is
bidiagonal.  This case is useful when the lower or upper semibandwidth is
increased one diagonal at a time.  Let
\[
 a(z)\defeq\sum_{\nu=-r}^{s}a_\nu z^\nu,
 \qquad
 q(z)\defeq z^r a(z),
 \qquad
 d\defeq r+s,
\]
with exact support $[-r,s]$.  We extend the coefficient sequence by
$a_\nu\defeq0$ outside this support.  Throughout the two-sector discussion
below we assume $r,s\ge1$; at an outer edge the nonexistent adjacent sector is
simply absent.  Take $\alpha,\beta\in K$ with $\alpha\beta\ne0$ and put
\begin{equation}\label{eq:bidiagonal-symbols}
 b_+(z)\defeq\alpha+\beta z,
 \qquad
 c_+(z)\defeq a(z)b_+(z),
 \qquad
 b_-(z)\defeq\alpha+\beta z^{-1},
 \qquad
 c_-(z)\defeq a(z)b_-(z).
\end{equation}
Thus $c_+$ has semibandwidths $(r,s+1)$, whereas $c_-$ has
semibandwidths $(r+1,s)$.  Let $e_1,\ldots,e_n$ denote the standard coordinate
vectors of $K^n$.

\begin{proposition}[Rank-one defect for one-diagonal growth]\label{prop:one-diagonal-defect}
For $n\ge1$, set
\[
 u_n\defeq(a_{-1},a_{-2},\ldots,a_{-n})^T,
 \qquad
 v_n\defeq(a_n,a_{n-1},\ldots,a_1)^T.
\]
Then
\begin{align}
 \T_n(a)\T_n(b_+)
 &=\T_n(c_+)-\beta u_ne_1^T,
 \label{eq:upper-rank-one-defect}\\
 \T_n(a)\T_n(b_-)
 &=\T_n(c_-)-\beta v_ne_n^T.
 \label{eq:lower-rank-one-defect}
\end{align}
Hence each finite-section defect has rank at most one; it has rank one when
the corresponding vector is nonzero.
\end{proposition}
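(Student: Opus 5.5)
Both identities are the bidiagonal specialization of the exact entrywise formula \eqref{eq:missing-indices}, so the plan is a direct computation of $(\T_n(c_\pm)-\T_n(a)\T_n(b_\pm))_{ij}$.

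For $b_+=\alpha+\beta z$ we have $b_0=\alpha$, $b_1=\beta$, and all other coefficients vanish. In \eqref{eq:missing-indices} the summand $a_{k-i}b_{j-k}$ is nonzero only for $j-k\in\{0,1\}$. In the first sum we need $k\le0$, which with $j\ge1$ forces $k=0$ and $j=1$, contributing $a_{-i}b_1=\beta a_{-i}$. In the second sum we need $k\ge n+1$, but $j-k\in\{0,1\}$ gives $k\le j\le n$, so this sum is empty. Hence the defect $C_n-A_nB_n$ equals $\beta$ times the matrix whose first column is $(a_{-1},\dots,a_{-n})^T=u_n$ and whose other entries are zero. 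That is $\beta u_ne_1^T$, which is \eqref{eq:upper-rank-one-defect}. This agrees with Proposition~\ref{prop:finite-product}: here $h_R=\min(s,0)=0$, and the left corner has a single column, since $s_b=1$.

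For $b_-=\alpha+\beta z^{-1}$, the nonzero coefficients are $b_0=\alpha$ and $b_{-1}=\beta$, so $j-k\in\{0,-1\}$, that is, $k\in\{j,j+1\}$. The first sum needs $k\le0$, which is impossible for $j\ge1$. The second sum needs $k\ge n+1$, which forces $k=n+1$ and $j=n$, contributing $a_{n+1-i}b_{-1}=\beta a_{n+1-i}$. The vector $(a_{n+1-i})_{i=1}^n=(a_n,\dots,a_1)^T$ is exactly $v_n$, which gives \eqref{eq:lower-rank-one-defect}.

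For the rank claim, an outer product $\beta w e^T$ with $\beta\ne0$ has rank one exactly when $w\ne0$, and rank zero otherwise. No genericity or large-$n$ hypothesis is needed.

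The only point requiring care is the index bookkeeping: getting the sign convention $a_{j-i}$ right, and checking that the vector orientations in $u_n$ and $v_n$ match the entry order of the nonzero column. Entries of $a$ outside its support are taken to be zero, as stated, so $u_n$ and $v_n$ are simply truncated for large $n$.
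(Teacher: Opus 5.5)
Your proposal is correct and is essentially the paper's argument. The paper computes the product entries directly as $\alpha a_{j-i}+\beta a_{j-i-1}$ and notes that the second term is missing only in the first column (last column in the lower case). Your specialization of \eqref{eq:missing-indices} isolates exactly that missing term, with the correct orientations of $u_n$ and $v_n$.
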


\begin{proof}
For the upper factor, the $(i,j)$ entry of the product is
\[
 \alpha a_{j-i}+\beta a_{j-i-1}
\]
when $j>1$, which is exactly the $(i,j)$ entry of $\T_n(c_+)$.  In the first
column the second term is missing, leaving the correction
$\beta(a_{-1},\ldots,a_{-n})^T e_1^T$.  The lower formula is obtained in the
same way from the last column.
\end{proof}

For a monic polynomial
\[
 f(t)\defeq\prod_{\nu=1}^m(t-\lambda_\nu)
\]
and a scalar $\gamma\in K$, write
\begin{equation}\label{eq:root-scaling}
 (\mathsf S_\gamma f)(t)
 \defeq
 \prod_{\nu=1}^m(t-\gamma\lambda_\nu).
\end{equation}
Thus $\mathsf S_\gamma$ scales every recurrence mode by $\gamma$.

\begin{theorem}[One-diagonal Pascal rule]\label{thm:one-diagonal-pascal}
The canonical recurrence polynomials for the two enlarged bands satisfy
\begin{align}
 \chi_{q(\alpha+\beta z),\,s+1}
 &=
 (\mathsf S_\alpha\chi_{q,s})
 (\mathsf S_\beta\chi_{q,s+1}),
 \label{eq:upper-pascal}\\
 \chi_{q(\beta+\alpha z),\,s}
 &=
 (\mathsf S_\alpha\chi_{q,s})
 (\mathsf S_\beta\chi_{q,s-1}).
 \label{eq:lower-pascal}
\end{align}
Consequently, their degrees split according to Pascal's identities
\begin{align}
 \binom{d+1}{s+1}
 &=\binom ds+\binom d{s+1},
 \label{eq:upper-pascal-degree}\\
 \binom{d+1}{s}
 &=\binom ds+\binom d{s-1}.
 \label{eq:lower-pascal-degree}
\end{align}
\end{theorem}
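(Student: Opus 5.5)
This follows directly from the graded multiplication law, Theorem~\ref{thm:graded-product}, applied with $q_1\defeq q$ (degree $d$) and $q_2$ a linear polynomial. The first step is to record the graded profile of a linear core. For $q_2(z)=\alpha+\beta z$ we have $d_2=1$, leading coefficient $\beta$ and constant term $\alpha$. By \eqref{eq:extreme-profiles}, $\chi_{q_2,0}(t)=t-\beta$ and $\chi_{q_2,1}(t)=t-\alpha$. For the reversed linear core $\beta+\alpha z$ the roles are swapped: $\chi_{\cdot,0}(t)=t-\alpha$ and $\chi_{\cdot,1}(t)=t-\beta$.

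The second step is to observe that a composed product with a degree-one monic polynomial $t-\gamma$ is root scaling. By \eqref{eq:boxtimes},
\[
 f\boxtimes(t-\gamma)=\mathsf S_\gamma f.
\]

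For \eqref{eq:upper-pascal}, apply \eqref{eq:graded-product} with $k=s+1$, $d_1=d$ and $d_2=1$. The index $i$ ranges over $\{s,s+1\}$; both values are admissible because $s+1\le d$ when $r\ge1$. This gives
\[
 \chi_{q(\alpha+\beta z),s+1}
 =(\chi_{q,s}\boxtimes\chi_{q_2,1})(\chi_{q,s+1}\boxtimes\chi_{q_2,0})
 =(\mathsf S_\alpha\chi_{q,s})(\mathsf S_\beta\chi_{q,s+1}).
\]
For \eqref{eq:lower-pascal}, take $q_2=\beta+\alpha z$ and $k=s$, so that $i\in\{s-1,s\}$ and $s\ge1$. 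This gives
\[
 (\chi_{q,s}\boxtimes\chi_{q_2,0})(\chi_{q,s-1}\boxtimes\chi_{q_2,1})
 =(\mathsf S_\alpha\chi_{q,s})(\mathsf S_\beta\chi_{q,s-1}).
\]

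It remains to check that these exterior degrees are the ones attached to the enlarged bands in \eqref{eq:bidiagonal-symbols}. The core of $c_+$ is $z^r a(z)(\alpha+\beta z)=q(z)(\alpha+\beta z)$, with upper semibandwidth $s+1$; the upper semibandwidth is the exterior degree in \eqref{eq:standard-symbol}. For $c_-$, the core is $z^{r+1}a(z)(\alpha+\beta z^{-1})=q(z)(\beta+\alpha z)$, with upper semibandwidth $s$. This bookkeeping, especially the swap of $\alpha$ and $\beta$ in the lower case, is the only point requiring care; everything else is formal.

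The degree identities \eqref{eq:upper-pascal-degree} and \eqref{eq:lower-pascal-degree} follow by taking degrees on both sides, since $\mathsf S_\gamma$ preserves degree and $\deg\chi_{q,k}=\binom dk$. They are also the case $d_2=1$ of \eqref{eq:degree-vandermonde}. At the outer edges $r=0$ or $s=0$, the binomial convention from \eqref{eq:duality-data} makes the missing sector's factor the empty product $1$.
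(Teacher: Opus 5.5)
Your proposal is correct and follows the paper's proof essentially verbatim. Like the paper, you read off the two-entry profile of the linear core from \eqref{eq:extreme-profiles}, apply Theorem~\ref{thm:graded-product} at exterior degree $s+1$ (resp.\ $s$ for the reversed core $\beta+\alpha z$), and take degrees; your remark that $f\boxtimes(t-\gamma)=\mathsf S_\gamma f$ and your check of the core of $c_-$ only make explicit steps the paper leaves implicit.
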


\begin{proof}
The polynomial core of $c_+$ is $q(z)(\alpha+\beta z)$.  For the linear core
$\ell_+(z)\defeq\alpha+\beta z$, its two profile entries are
\[
 \chi_{\ell_+,0}(t)=t-\beta,
 \qquad
 \chi_{\ell_+,1}(t)=t-\alpha.
\]
Theorem~\ref{thm:graded-product} at total exterior degree $s+1$ therefore has
exactly two sectors, giving \eqref{eq:upper-pascal}.  For $c_-$ the new
polynomial core is $q(z)(\beta+\alpha z)$; applying the same argument at
exterior degree $s$ gives \eqref{eq:lower-pascal}.  Taking degrees yields
\eqref{eq:upper-pascal-degree}--\eqref{eq:lower-pascal-degree}.
\end{proof}

The factor $\mathsf S_\alpha\chi_{q,s}$ in both formulas is inherited from the
old determinant sequence.  The other factor is the new adjacent sector.  The
matrix product selects these sectors asymmetrically.  Indeed,
\begin{equation}\label{eq:bidiagonal-determinant-product}
 \det\!\bigl(\T_n(a)\T_n(b_\pm)\bigr)
 =\alpha^n D_n(a),
\end{equation}
because both bidiagonal factors have determinant $\alpha^n$.  In particular,
with the natural unit-diagonal normalization $\alpha\defeq1$, the determinant
of the distorted enlarged-band matrix is exactly the old determinant
sequence $D_n(a)$.  Thus the apparent mismatch of recurrence orders is not a
delay before a larger recurrence begins: the special boundary data suppress
the adjacent sector, so the larger canonical annihilator is nonminimal for
this distorted sequence.

A direct cofactor expansion also shows why this rank-one simplification does
not, in general, close on the clean determinant sequence alone.  Writing
$C_n^+\defeq\T_n(c_+)$, one has
\begin{equation}\label{eq:one-column-cofactor-expansion}
 \det\!\bigl(\T_n(a)\T_n(b_+)\bigr)
 =D_n(c_+)
 -\beta\sum_{j=1}^{\min(r,n)}
 a_{-j}(-1)^{j+1}
 \det C_n^+[\widehat j\mid\widehat 1],
\end{equation}
where $C_n^+[\widehat j\mid\widehat 1]$ denotes the matrix obtained by
deleting row $j$ and column $1$.  The term $j=1$ is the principal Toeplitz
determinant $D_{n-1}(c_+)$, but for $j>1$ the remaining matrix is a
nonprincipal Toeplitz minor.  Thus only the extreme case $r=1$ closes
immediately on clean Toeplitz determinants of different sizes; in general,
additional boundary-minor states are unavoidable.

The missing sector can nevertheless be located directly at matrix level.  Let
\[
 \mathcal T(a)\defeq(a_{j-i})_{i,j\in\mathbb Z},
 \qquad
 I_n\defeq\{1,\ldots,n\},
\]
and, for $0\le j\le n$, put
\[
 K_{n,j}\defeq\{0,1,\ldots,n\}\setminus\{j\},
 \qquad
 M_{n,j}(a)\defeq
 \det\mathcal T(a)[I_n,K_{n,j}].
\]

\begin{proposition}[Cauchy--Binet chain]\label{prop:cauchy-binet-chain}
For upper one-diagonal growth,
\begin{equation}\label{eq:cauchy-binet-chain}
 D_n(c_+)
 =
 \sum_{j=0}^n
 \alpha^{\,n-j}\beta^j M_{n,j}(a).
\end{equation}
The two endpoint minors are
\begin{equation}\label{eq:cauchy-binet-endpoints}
 M_{n,0}(a)=D_n(a),
 \qquad
 M_{n,n}(a)=D_n(za).
\end{equation}
The second endpoint is the standard determinant sequence for the adjacent
profile component $(q,s+1)$.
\end{proposition}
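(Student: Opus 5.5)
The plan is to factor the clean enlarged-band section through a rectangular product of bi-infinite Toeplitz blocks and then expand with Cauchy--Binet. Because the symbol factors exactly as $c_+=a\,b_+$, the bi-infinite matrices satisfy $\mathcal T(c_+)=\mathcal T(a)\mathcal T(b_+)$. The $(k,j)$ entry of $\mathcal T(b_+)$ is nonzero only for $j-k\in\{0,1\}$. Hence for $i,j\in I_n$ the sum over $k$ runs only over $k\in\{j-1,j\}$, that is, over $k\in\{0,1,\ldots,n\}$. This gives the exact factorization
\[
 \T_n(c_+)=\mathcal T(a)[I_n,\{0,\ldots,n\}]\;\mathcal T(b_+)[\{0,\ldots,n\},I_n],
\]
an $n\times(n+1)$ matrix times an $(n+1)\times n$ matrix. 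This is the same mechanism as in \eqref{eq:one-sided-weighted-CB}, with the single extra index $0$ absorbing the missing first-column term seen in Proposition~\ref{prop:one-diagonal-defect}.

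Next, I apply Cauchy--Binet. The $n$-subsets of $\{0,\ldots,n\}$ are exactly the sets $K_{n,j}$ for $0\le j\le n$, so
\[
 D_n(c_+)=\sum_{j=0}^n M_{n,j}(a)\,\det\mathcal T(b_+)[K_{n,j},I_n].
\]
It remains to evaluate the bidiagonal minor. Row $k$ of $\mathcal T(b_+)$, restricted to columns $I_n$, has $\alpha$ in column $k$ (when $k\ge1$) and $\beta$ in column $k+1$ (when $k+1\le n$). Taking rows $0,\ldots,j-1$ and $j+1,\ldots,n$, the resulting matrix is block triangular. The rows $0,\ldots,j-1$ against columns $1,\ldots,j$ form a lower-bidiagonal block with diagonal $\beta$. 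The rows $j+1,\ldots,n$ against columns $j+1,\ldots,n$ form an upper-bidiagonal block with diagonal $\alpha$. The off-block entries lie on one side only, so the minor is triangular up to the block ordering. Its determinant is $\beta^j\alpha^{n-j}$, and since the rows are taken in increasing order no sign arises. Substituting this gives \eqref{eq:cauchy-binet-chain}.

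For the endpoints, $K_{n,0}=I_n$ gives $M_{n,0}(a)=\det\T_n(a)=D_n(a)$ directly. For $K_{n,n}=\{0,\ldots,n-1\}$, the entry in row $i$ and column $k=j-1$ is $a_{j-1-i}$, which is the $(i,j)$ entry of $\T_n(za)$ because $(za)_\nu=a_{\nu-1}$. Hence $M_{n,n}(a)=D_n(za)$. Finally, $za$ has support $[-(r-1),s+1]$ and polynomial core $z^{r-1}\cdot za=z^ra=q$. By \eqref{eq:standard-symbol} it equals $a_{q,s+1}$, so $D_n(za)=D_n(q,s+1)$, the adjacent profile component.

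The main obstacle is bookkeeping rather than depth. Two points need care: first, confirming that the row ranges used in the bi-infinite product are exactly $\{0,\ldots,n\}$ with no boundary loss; second, getting the orientation of the bidiagonal minor right, so that the $\beta^j\alpha^{n-j}$ weighting appears without a stray sign. I would check the latter on $n=1,2$ before writing the general block-triangular argument.
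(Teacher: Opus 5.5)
Your proposal is correct and follows the paper's proof essentially step for step: the bi-infinite factorization $\mathcal T(c_+)=\mathcal T(a)\mathcal T(b_+)$, restriction of the intermediate indices to $\{0,\ldots,n\}$, Cauchy--Binet over the sets $K_{n,j}$, the bidiagonal minor $\alpha^{n-j}\beta^j$, and identification of the endpoint $M_{n,n}(a)$ through the column shift. Your check that $za=a_{q,s+1}$ supplies a detail the paper leaves implicit. One small sharpening: $\mathcal T(b_+)[K_{n,j},I_n]$ is actually block diagonal, since both off-diagonal blocks vanish, so the determinant factorization with no sign is immediate.
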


\begin{proof}
On bi-infinite matrices one has
$\mathcal T(c_+)=\mathcal T(a)\mathcal T(b_+)$.  In the Cauchy--Binet expansion
of its minor with row and column set $I_n$, the bidiagonal factor allows only
$n$-element intermediate sets contained in $\{0,\ldots,n\}$.  They are
exactly the sets $K_{n,j}$.  The corresponding bidiagonal minor has determinant
$\alpha^{n-j}\beta^j$, giving \eqref{eq:cauchy-binet-chain}.  The first endpoint
uses the original columns $1,\ldots,n$, while the last uses $0,\ldots,n-1$,
which is precisely the Toeplitz section of $za$.
\end{proof}

\begin{remark}[Boundary-minor recurrence closure]\label{rem:boundary-minor-closure}
Two different boundary mechanisms occur here.  The cofactors in
\eqref{eq:one-column-cofactor-expansion} delete a row and a column at the
same boundary.  They therefore have zero row-column displacement and belong,
after normalization, to the standard row-column state module of the companion
paper \cite{AlekseyevKhomovskyRecurrences2026}.  Appendix~\ref{app:boundary-closure}
proves this standard-state membership for the same-corner minors generated by
finite-section multiplication and then uses generic observability to eliminate
them in favor of consecutive clean determinants.

The Cauchy--Binet chain in Proposition~\ref{prop:cauchy-binet-chain} also
shows why one must not make the corresponding assertion for an arbitrary
bounded boundary geometry.  Its endpoint $M_{n,n}(a)=D_n(za)$ has undergone a
Laurent recentering and belongs to the adjacent profile rather than to the
standard state module for $a$.  No generic clean-shift reduction to $D_n(a)$
is asserted for such cross-boundary displacement.  The defect filtrations in
the main text use only the same-corner coefficient sequences covered by
Lemma~\ref{lem:fixed-boundary-compound}.
\end{remark}

The whole rank-one correction contains intermediate boundary minors as well as
the adjacent endpoint, so it need not itself satisfy only the adjacent-sector
recurrence.  The adjacent sector emerges after the inherited one is filtered
out.  We write $\operatorname{adj}(M)$ for the classical adjugate of a square
matrix $M$.

\begin{proposition}[Filtering the rank-one defect]\label{prop:defect-filter}
Let $L$ denote the forward shift on sequences,
\[
 (Lx)_n\defeq x_{n+1}.
\]
For upper one-diagonal growth set
\[
 X_n\defeq\alpha^nD_n(a),
 \qquad
 Y_n\defeq D_n(c_+),
 \qquad
 H_n\defeq e_1^T\operatorname{adj}(\T_n(c_+))u_n,
\]
and
\[
 P(t)\defeq(\mathsf S_\alpha\chi_{q,s})(t),
 \qquad
 Q(t)\defeq(\mathsf S_\beta\chi_{q,s+1})(t).
\]
Then
\begin{equation}\label{eq:defect-decomposition}
 Y_n=X_n+\beta H_n
\end{equation}
and
\begin{equation}\label{eq:defect-filter}
 Q(L)P(L)H=0.
\end{equation}
Equivalently, $P(L)H$ is annihilated by the adjacent-sector polynomial $Q$.
For generic coefficients and parameters, $Q$ is its minimal recurrence
polynomial.
\end{proposition}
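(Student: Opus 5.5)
The plan has three parts: derive the decomposition \eqref{eq:defect-decomposition} from the rank-one formula, get the annihilation \eqref{eq:defect-filter} from the graded machinery already developed, and prove generic sharpness by specialization.

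\textbf{Decomposition.} By \eqref{eq:upper-rank-one-defect},
\[
\T_n(a)\T_n(b_+)=\T_n(c_+)-\beta u_ne_1^T.
\]
Since $\det\T_n(b_+)=\alpha^n$, the left side has determinant $X_n$. The matrix determinant lemma for a rank-one update, $\det(M-\beta uv^T)=\det M-\beta v^T\operatorname{adj}(M)u$, then gives
\[
X_n=Y_n-\beta H_n,
\]
which is \eqref{eq:defect-decomposition}. This identity is polynomial in the entries, so it holds even when $\T_n(c_+)$ is singular.

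\textbf{Annihilation.} First, $H_n=(Y_n-X_n)/\beta$.

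\begin{itemize}
\item By Proposition~\ref{prop:background-recurrence}, $Y$ is annihilated by $\chi_{q(\alpha+\beta z),s+1}$. By Theorem~\ref{thm:one-diagonal-pascal}, this polynomial equals $P\cdot Q$.
\item $X_n=\alpha^nD_n(a)$, and $D_n(a)$ is annihilated by $\chi_{q,s}$. Multiplying a sequence by $\alpha^n$ scales its modes by $\alpha$, so $X$ is annihilated by $\mathsf S_\alpha\chi_{q,s}=P$.
\end{itemize}

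Hence $P(L)Q(L)H=0$, which is \eqref{eq:defect-filter}, since shift polynomials commute. Equivalently, $P(L)H=\beta^{-1}P(L)Y$, because $P(L)X=0$.

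Alternatively one could invoke Lemma~\ref{lem:fixed-boundary-compound} with the single same-corner correction $u_ne_1^T$, but the elementary route above suffices.

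\textbf{Generic sharpness.} Since $P(L)H=\beta^{-1}P(L)Y$, it suffices to show two things:

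\begin{itemize}
\item $Y$ generically has minimal polynomial exactly $PQ$, with all roots distinct;
\item $P$ and $Q$ are coprime generically.
\end{itemize}

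Then $P(L)Y$ has minimal polynomial exactly $Q$, because applying $P(L)$ removes exactly the modes of $P$ and no others. The first point is Proposition~\ref{prop:background-recurrence} applied to the enlarged core $q(z)(\alpha+\beta z)$, whose coefficients are generic as $a,\alpha,\beta$ vary. Indeed, the map from (roots of $q$, leading coefficient, root $-\alpha/\beta$) to coefficients is dominant, so a generic point of the parameter space gives a generic enlarged core.

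For coprimality of $P$ and $Q$, the roots are
\[
\alpha(-1)^sq_d\prod_{I}\rho_i,\ |I|=s,
\qquad\text{and}\qquad
\beta(-1)^{s+1}q_d\prod_{J}\rho_j,\ |J|=s+1.
\]
These are distinct monomials in independent root coordinates $\rho_i,\alpha,\beta$, so no root of $P$ coincides with a root of $Q$ off a proper Zariski-closed set.

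\textbf{Main obstacle.} The delicate step is making the genericity transfer rigorous. The annihilation identity and minimality are statements over the coefficient field, whereas distinctness of the modes is checked in root coordinates. I would handle this exactly as in the proof of Theorem~\ref{thm:defect-order-filtration}:

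\begin{enumerate}
\item Suppose the proper divisor $Q/(t-\lambda)$ annihilated $P(L)H$ identically in the parameters.
\item Then $P\cdot Q/(t-\lambda)$ would annihilate $Y$ generically.
\item This contradicts the generic minimality of $\chi_{q(\alpha+\beta z),s+1}$ given by Proposition~\ref{prop:background-recurrence}.
\end{enumerate}

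Because that proposition is already stated for generic coefficients, no separate specialization argument should be needed.
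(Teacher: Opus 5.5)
Your proposal is correct and follows the paper's own proof. The decomposition comes from the rank-one determinant identity. The annihilation \eqref{eq:defect-filter} comes from $P$ annihilating $X$ and, via the Pascal rule, $PQ$ annihilating $Y$. Sharpness comes from the generic minimality of $PQ$ for $Y$.

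Your version makes explicit what the paper states in one line: $P(L)H=\beta^{-1}P(L)Y$, together with the dominance of $(q,\alpha,\beta)\mapsto q(z)(\alpha+\beta z)$. One simplification: once $PQ$ is the minimal polynomial of $Y$, the annihilator of $P(L)Y$ is $\{f: PQ\mid fP\}=(Q)$. So your separate coprimality check of $P$ and $Q$ is harmless but not needed.
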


\begin{proof}
Equation~\eqref{eq:defect-decomposition} is the rank-one determinant identity
applied to \eqref{eq:upper-rank-one-defect}.  The sequence $X$ is annihilated
by $P$, whereas Theorem~\ref{thm:one-diagonal-pascal} shows that $Y$ is
annihilated by $PQ$.  Applying $P(L)$ to
$Y-X=\beta H$ and then applying $Q(L)$ gives
\eqref{eq:defect-filter}.  Generically the two sector mode sets are disjoint
and all adjacent-sector amplitudes survive, giving the final statement.
\end{proof}

\begin{corollary}[One-sided extreme cases]\label{cor:one-sided-growth}
The upper-growth identity simplifies, for $r=1$, to
\begin{equation}\label{eq:r1-growth}
 D_n(c_+)-\beta a_{-1}D_{n-1}(c_+)
 =\alpha^nD_n(a),
 \qquad n\ge1,
\end{equation}
and the adjacent-sector factor is $t-\beta a_{-1}$.  Symmetrically, for
$s=1$,
\begin{equation}\label{eq:s1-growth}
 D_n(c_-)-\beta a_sD_{n-1}(c_-)
 =\alpha^nD_n(a),
 \qquad n\ge1,
\end{equation}
and the adjacent-sector factor is $t-\beta a_s$.
\end{corollary}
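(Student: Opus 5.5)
The plan is to specialize the rank-one defect identities of Proposition~\ref{prop:one-diagonal-defect} to the extreme cases and expand the resulting determinant by multilinearity in a single column. The recurrence-sector statement should then follow from the extreme-profile formulas \eqref{eq:extreme-profiles} combined with Theorem~\ref{thm:one-diagonal-pascal}.

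For $r=1$, the convention $a_\nu=0$ outside $[-1,s]$ gives $u_n=(a_{-1},0,\ldots,0)^T=a_{-1}e_1$. Equation \eqref{eq:upper-rank-one-defect} then reads
\[
 \T_n(a)\T_n(b_+)=\T_n(c_+)-\beta a_{-1}e_1e_1^T,
\]
so the correction touches only the $(1,1)$ entry. By linearity of the determinant in the first column, $\det(\T_n(c_+)-\beta a_{-1}e_1e_1^T)$ equals $D_n(c_+)$ minus $\beta a_{-1}$ times the $(1,1)$ cofactor. That cofactor is the principal minor obtained by deleting row $1$ and column $1$. This is again a leading Toeplitz section of $c_+$, namely $D_{n-1}(c_+)$, with the convention $D_0\defeq1$ covering $n=1$. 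In other words, only the $j=1$ term of \eqref{eq:one-column-cofactor-expansion} survives. Comparing with \eqref{eq:bidiagonal-determinant-product} gives \eqref{eq:r1-growth}.

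For the sector factor I will use $Q=\mathsf S_\beta\chi_{q,s+1}$ as in Proposition~\ref{prop:defect-filter}. Here $d=r+s=s+1$, so $\chi_{q,s+1}=\chi_{q,d}=t-q_0$ by \eqref{eq:extreme-profiles}. The constant coefficient of $q(z)=za(z)$ is $q_0=a_{-1}$, hence $Q(t)=t-\beta a_{-1}$. As a consistency check, \eqref{eq:r1-growth} says $(L-\beta a_{-1})Y$ is a shifted copy of $\alpha^nD_n(a)$, which is annihilated by $P=\mathsf S_\alpha\chi_{q,s}$. This matches the factorization $PQ$ of \eqref{eq:upper-pascal}.

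The case $s=1$ is symmetric. Now $v_n=(a_n,\ldots,a_1)^T=a_1e_n$, so \eqref{eq:lower-rank-one-defect} becomes a correction $\beta a_1e_ne_n^T$ at the $(n,n)$ entry. Expanding along the last column, the relevant cofactor deletes row $n$ and column $n$, which again leaves the leading section $\T_{n-1}(c_-)$. This gives \eqref{eq:s1-growth} with $a_s=a_1$. The new sector is $\mathsf S_\beta\chi_{q,s-1}=\mathsf S_\beta\chi_{q,0}=t-\beta q_d$ from \eqref{eq:lower-pascal}, and $q_d=a_s$ by \eqref{eq:extreme-profiles}.

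I expect no real obstacle here. The only points needing care are the support bookkeeping that collapses $u_n$ and $v_n$ to single entries, and checking that the cofactor left after deleting the boundary row and column is a principal, hence clean, Toeplitz section.
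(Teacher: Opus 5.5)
Your proposal is correct and follows essentially the same route as the paper. You specialize $u_n=a_{-1}e_1$ (respectively $v_n=a_se_n$), so the rank-one correction sits on a diagonal corner and its cofactor is the clean section $D_{n-1}(c_\pm)$. You then read off the new sector $\mathsf S_\beta\chi_{q,d}=t-\beta a_{-1}$ (respectively $\mathsf S_\beta\chi_{q,0}=t-\beta a_s$) from \eqref{eq:extreme-profiles}. The only difference is that the paper phrases the first step through the adjugate term $H_n$ of \eqref{eq:defect-decomposition} rather than column multilinearity, which is the same computation.
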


\begin{proof}
For $r=1$, one has $u_n=a_{-1}e_1$, so the rank-one cofactor in
\eqref{eq:defect-decomposition} is $a_{-1}D_{n-1}(c_+)$.  Moreover,
$s+1=d$, hence $\chi_{q,s+1}(t)=t-a_{-1}$ by
\eqref{eq:extreme-profiles}.  The lower case follows by symmetry from
$v_n=a_se_n$ and $\chi_{q,0}(t)=t-a_s$.
\end{proof}

The size of the adjacent sector in upper growth is
$\binom d{s+1}=\binom d{r-1}$, whereas in lower growth it is
$\binom d{s-1}$.  Thus repeated upper growth is especially simple while the
lower semibandwidth remains $1$, and repeated lower growth is especially
simple while the upper semibandwidth remains $1$.

\begin{remark}[Zero diagonal]
The choice $\alpha=0$ is qualitatively different.  For example,
$b_+(z)=\beta z$ is singular as a finite section, and, when $r\ge1$, the
support changes from $[-r,s]$ to $[-r+1,s+1]$: one lower diagonal is traded for
one upper diagonal rather than increasing the total bandwidth.  Its polynomial
core is only rescaled, since
\[
 z^{r-1}(\beta z a(z))=\beta q(z).
\]
Thus a zero diagonal produces an extreme recentering, not genuine
one-diagonal growth.
\end{remark}

\section{Recentring and defect indices}\label{sec:defect-indices}

The same product $c=ab$ can be represented by factors shifted in
opposite directions relative to the main diagonal.  For an integer $\delta$,
put
\begin{equation}\label{eq:recenter}
 a^{(\delta)}(z)\defeq z^\delta a(z),
 \qquad
 b^{(\delta)}(z)\defeq z^{-\delta}b(z).
\end{equation}
Then $a^{(\delta)}b^{(\delta)}=c$.  We call this opposite shift of the two
factors a \emph{recentring}, because it changes which coefficients of the
factors lie on the main diagonal without changing their product.  Writing each ordered pair as (lower semibandwidth, upper semibandwidth), the
shifted factors have
\[
 a^{(\delta)}:\ (r_a-\delta,s_a+\delta),
 \qquad
 b^{(\delta)}:\ (r_b+\delta,s_b-\delta).
\]
Hence both are genuine finite Laurent bands exactly when
\begin{equation}\label{eq:delta-range}
 -h_R\le\delta\le h_L.
\end{equation}

\begin{proposition}[Boundary-rank ladder]\label{prop:rank-ladder}
For every admissible $\delta$, the two corner defects in
\[
 \T_n(a^{(\delta)})\T_n(b^{(\delta)})-\T_n(c)
\]
have, for sufficiently large $n$, ranks
\begin{equation}\label{eq:rank-ladder}
 h_L(\delta)\defeq h_L-\delta,
 \qquad
 h_R(\delta)\defeq h_R+\delta.
\end{equation}
In particular,
\[
 h_L(\delta)+h_R(\delta)=h_L+h_R,
\]
so increasing $\delta$ by one transfers one unit of defect rank from the
left corner to the right corner.
\end{proposition}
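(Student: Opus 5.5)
The plan is to reduce everything to Proposition~\ref{prop:finite-product}, applied to the recentred pair $a^{(\delta)},b^{(\delta)}$ instead of $a,b$. The product symbol is unchanged, so $\T_n(a^{(\delta)}b^{(\delta)})=\T_n(c)$. Proposition~\ref{prop:finite-product} then gives, for large $n$, two corner defects with ranks
\[
 \min\bigl(r_{a^{(\delta)}},s_{b^{(\delta)}}\bigr)
 \quad\text{and}\quad
 \min\bigl(s_{a^{(\delta)}},r_{b^{(\delta)}}\bigr).
\]
Its hypotheses hold because shifting a Laurent polynomial by $z^{\pm\delta}$ keeps the endpoint coefficients nonzero. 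Admissibility \eqref{eq:delta-range} guarantees that all four semibandwidths stay nonnegative. So the first step is simply to substitute the shifted semibandwidths
\[
 a^{(\delta)}:\ (r_a-\delta,s_a+\delta),
 \qquad
 b^{(\delta)}:\ (r_b+\delta,s_b-\delta).
\]

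The second step is to evaluate the two minima:
\[
 \min(r_a-\delta,\,s_b-\delta)=\min(r_a,s_b)-\delta=h_L-\delta,
\]
\[
 \min(s_a+\delta,\,r_b+\delta)=\min(s_a,r_b)+\delta=h_R+\delta.
\]
This is immediate, since a common additive shift commutes with $\min$. It gives \eqref{eq:rank-ladder}. Summing the two ranks gives $h_L(\delta)+h_R(\delta)=h_L+h_R$, and comparing $\delta$ with $\delta+1$ gives the transfer of exactly one unit of rank from the left corner to the right corner.

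The only point needing care is the boundary of the admissible range. There one semibandwidth can become zero, for example $r_a-\delta=0$ at $\delta=h_L=r_a$. The corresponding corner defect is then empty, and Proposition~\ref{prop:finite-product} should be read with that corner's rank equal to $0=\min(0,\cdot)$. I would check directly from \eqref{eq:missing-indices} that the left sum vanishes when the relevant semibandwidth is zero, since then no $a_{k-i}$ with $k\le0<i$ lies in the support. This agrees with the formula, so the endpoints need no separate treatment.

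The "sufficiently large $n$" condition is inherited from Proposition~\ref{prop:finite-product} applied to the shifted bands. In particular it suffices to take $n\ge\max(r_a+s_a,r_b+s_b)$, because the total bandwidths are unchanged by recentring.
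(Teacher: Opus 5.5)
Your proposal is correct and follows the paper's proof: apply Proposition~\ref{prop:finite-product} to the recentred pair $a^{(\delta)},b^{(\delta)}$ with its shifted semibandwidths, and note that a common additive shift commutes with $\min$, giving $h_L-\delta$ and $h_R+\delta$. Your additional checks are consistent with the paper's one-line argument and slightly more careful than it: you verify that admissibility keeps all four shifted semibandwidths nonnegative, that the zero-rank endpoint corners are empty, and that $n\ge\max(r_a+s_a,r_b+s_b)$ suffices.
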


\begin{proof}
Apply \eqref{eq:hLhR} to the shifted semibandwidths:
\[
 \min(r_a-\delta,s_b-\delta)=h_L-\delta,
\]
\[
 \min(s_a+\delta,r_b+\delta)=h_R+\delta.
\]
\end{proof}

Proposition~\ref{prop:rank-ladder} motivates the term \emph{defect index}
for $\delta$: changing $\delta$ by one transfers one unit of defect rank from
one boundary to the other while keeping the total defect rank fixed.

The same integer $\delta$ also indexes the exterior sectors.  Set
\[
 q_a(z)\defeq z^{r_a}a(z),
 \qquad
 q_b(z)\defeq z^{r_b}b(z),
\]
with degrees $d_a\defeq r_a+s_a$ and $d_b\defeq r_b+s_b$.  The polynomial cores do not
change under recentering:
\[
 z^{r_a-\delta}a^{(\delta)}(z)=q_a(z),
 \qquad
 z^{r_b+\delta}b^{(\delta)}(z)=q_b(z).
\]
Only their exterior degrees change:
\begin{equation}\label{eq:shifted-degrees}
 k_a\defeq s_a+\delta,
 \qquad
 k_b\defeq s_b-\delta.
\end{equation}
Their sum is the upper semibandwidth
\[
 S\defeq s_a+s_b
\]
of the product symbol $c$.

\begin{theorem}[Defect-index factorization]\label{thm:defect-index}
For every admissible integer $\delta$ in \eqref{eq:delta-range}, set
\begin{equation}\label{eq:psi-delta}
 \Psi_\delta(t)
 \defeq
 \chi_{q_a,s_a+\delta}\boxtimes
 \chi_{q_b,s_b-\delta}.
\end{equation}
Then the sequence
\begin{equation}\label{eq:sector-sequence}
 Z_n^{(\delta)}
 \defeq
 \det\T_n(z^\delta a)\,
 \det\T_n(z^{-\delta}b)
\end{equation}
is annihilated by $\Psi_\delta$, and the full recurrence polynomial of the
unperturbed Toeplitz determinant sequence $\det\T_n(c)$ satisfies
\begin{equation}\label{eq:defect-product}
 \chi_{q_aq_b,S}(t)
 =
 \prod_{\delta=-h_R}^{h_L}\Psi_\delta(t).
\end{equation}
Generically, each $\Psi_\delta$ is the minimal recurrence polynomial of
$Z_n^{(\delta)}$.
\end{theorem}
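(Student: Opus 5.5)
The plan has three parts: the annihilation statement, the product formula \eqref{eq:defect-product}, and generic minimality. All three reduce to Proposition~\ref{prop:background-recurrence} and Theorem~\ref{thm:graded-product} once we record the invariance of cores under recentering, $z^{r_a-\delta}a^{(\delta)}=q_a$ and $z^{r_b+\delta}b^{(\delta)}=q_b$. This invariance means $a^{(\delta)}=a_{q_a,k_a}$ with $k_a=s_a+\delta$ in the notation \eqref{eq:standard-symbol}, and similarly $b^{(\delta)}=a_{q_b,k_b}$ with $k_b=s_b-\delta$. The admissibility range \eqref{eq:delta-range} is exactly $0\le k_a\le d_a$ and $0\le k_b\le d_b$; this is where we use $h_L=\min(r_a,s_b)$ and $h_R=\min(s_a,r_b)$.

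\emph{Annihilation.} By Proposition~\ref{prop:background-recurrence}, $\det\T_n(a^{(\delta)})=D_n(q_a,k_a)$ is annihilated by $\chi_{q_a,k_a}$, and $\det\T_n(b^{(\delta)})=D_n(q_b,k_b)$ by $\chi_{q_b,k_b}$. Over a splitting field, a sequence annihilated by $f$ is a finite sum of terms $p(n)\lambda^n$ with $\lambda$ a root of $f$ and $\deg p$ less than the multiplicity. The termwise product of two such sequences is a sum of terms $p(n)p'(n)(\lambda\mu)^n$. Equivalently, the product sequence is a matrix coefficient of $X\otimes Y$, where $X$ and $Y$ are companion matrices of the two annihilators. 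Since the characteristic polynomial of $X\otimes Y$ is $f\boxtimes g$ (eigenvalues $\lambda_i\mu_j$), Cayley--Hamilton gives $\Psi_\delta(L)Z^{(\delta)}=0$. I prefer this tensor-product argument because it handles repeated modes without any bookkeeping of multiplicities. The identity is polynomial in the coefficients and therefore holds after every specialization.

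\emph{Product formula.} Apply Theorem~\ref{thm:graded-product} with $q_1=q_a$, $q_2=q_b$ and $k=S$. The sum runs over $i$ from $\max(0,S-d_b)$ to $\min(d_a,S)$. Substituting $i=s_a+\delta$ gives $S-d_b=s_a-r_b$, so the lower limit becomes $\delta\ge\max(-s_a,-r_b)=-h_R$. Likewise $\min(d_a,S)=s_a+\min(r_a,s_b)$, so the upper limit becomes $\delta\le h_L$. The $i$th factor is $\chi_{q_a,s_a+\delta}\boxtimes\chi_{q_b,s_b-\delta}=\Psi_\delta$, which yields \eqref{eq:defect-product}.

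\emph{Generic minimality.} This is the only real content, and I expect it to be the main obstacle. The step that needs care is showing that no mode $\lambda_I\mu_J$ of $\Psi_\delta$ is lost through cancellation in the product. Work in independent root coordinates for $q_a$ and $q_b$, as in the proof of Theorem~\ref{thm:defect-order-filtration}. By the generic part of Proposition~\ref{prop:background-recurrence}, each factor sequence has simple, distinct modes, each with a nonzero amplitude, so $D_n(q_a,k_a)=\sum_I c_I\lambda_I^n$ and $D_n(q_b,k_b)=\sum_J c'_J\mu_J^n$ with all $c_I,c'_J\ne0$. The product equals $\sum_{I,J}c_Ic'_J(\lambda_I\mu_J)^n$. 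The products $\lambda_I\mu_J$ are distinct monomials in the independent root coordinates, because the roots of $q_a$ and $q_b$ are disjoint variables, so they remain distinct on a Zariski-open set. Hence every mode appears in $Z^{(\delta)}$ with nonzero amplitude $c_Ic'_J$. The minimal polynomial is therefore $\prod(t-\lambda_I\mu_J)=\Psi_\delta$, of degree $\binom{d_a}{k_a}\binom{d_b}{k_b}$. The edge cases $k_a\in\{0,d_a\}$ and $k_b\in\{0,d_b\}$ are covered by \eqref{eq:extreme-profiles}: there the corresponding factor is a single geometric sequence and simply rescales the other.
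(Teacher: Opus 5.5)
Your proposal is correct and follows the paper's proof. It identifies the recentered factors with the standard problems $(q_a,s_a+\delta)$ and $(q_b,s_b-\delta)$, passes from termwise products of modes to $\Psi_\delta$, and reindexes Theorem~\ref{thm:graded-product} at $k=S$ via $i=s_a+\delta$. Your tensor-product/Cayley--Hamilton form of the annihilation step and your independent-root argument for generic minimality (nonzero factor amplitudes plus distinct product monomials $\lambda_I\mu_J$) add rigor where the paper's proof only asserts the pairwise-product basis and never spells out the minimality claim.
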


\begin{proof}
By \eqref{eq:standard-symbol} and \eqref{eq:shifted-degrees}, the two factors
in \eqref{eq:sector-sequence} are precisely the standard Toeplitz determinant
problems for $(q_a,s_a+\delta)$ and $(q_b,s_b-\delta)$.  Their generic Widom mode sets are therefore encoded by the recurrence polynomials
$\chi_{q_a,s_a+\delta}$ and $\chi_{q_b,s_b-\delta}$.  Termwise products of
the modes have pairwise-product bases, so $Z_n^{(\delta)}$ is annihilated by
\eqref{eq:psi-delta}.  Finally, Theorem~\ref{thm:graded-product} with
$k=S$ has sectors $i=s_a+\delta$, and the admissible range of $i$ is exactly
\eqref{eq:delta-range}.  This gives \eqref{eq:defect-product}.
\end{proof}

\begin{remark}
The theorem changes the interpretation of the ``missing'' sectors.  They do
not arise most naturally as individual terms in a cofactor expansion of one
fixed product $\T_n(a)\T_n(b)$.  Instead, each sector is realized by a
different finite-section product in the recentered family
\[
 c=(z^\delta a)(z^{-\delta}b).
\]
All members have the same Toeplitz bulk but different allocations of the
fixed total boundary-defect rank.
\end{remark}

\subsection{Two-corner interval filtration}\label{subsec:two-corner-filtration}

The one-sided filtration above starts from an exact finite product with a
single corner defect.  For two-sided factors, both defects are present, and
the two defect orders move through the same sector ladder from opposite
directions.  This gives a bivariate refinement that connects the fixed-product
picture directly to the recentering sectors of Theorem~\ref{thm:defect-index}.

Retain the notation of Proposition~\ref{prop:finite-product} and put
\[
 d_a\defeq r_a+s_a,
 \qquad
 d_b\defeq r_b+s_b,
 \qquad
 S\defeq s_a+s_b.
\]
For sufficiently large $n$, introduce independent variables $x,y$ and expand
\begin{equation}\label{eq:bivariate-defect-interpolation}
 \mathcal F_n(x,y)
 \defeq
 \det(A_nB_n+xL_n+yR_n)
 =
 \sum_{k=0}^{h_L}\sum_{\ell=0}^{h_R}
 x^k y^\ell F_{k,\ell,n}.
\end{equation}
The degree bounds follow from the corner ranks.  At the two endpoints,
\begin{equation}\label{eq:bivariate-defect-endpoints}
 F_{0,0,n}=\det A_n\det B_n,
 \qquad
 \sum_{k=0}^{h_L}\sum_{\ell=0}^{h_R}F_{k,\ell,n}=D_n(c).
\end{equation}
Thus $k$ records left-corner defect order and $\ell$ records right-corner
defect order.

For an admissible defect index $-h_R\le\delta\le h_L$, retain the sector
polynomial $\Psi_\delta$ from \eqref{eq:psi-delta}.  For
$0\le k\le h_L$ and $0\le\ell\le h_R$, set
\begin{equation}\label{eq:bivariate-cumulative-polynomial}
 \Phi_{k,\ell}(t)
 \defeq
 \prod_{\delta=-\ell}^{k}\Psi_\delta(t).
\end{equation}

\begin{proposition}[Two-boundary exterior filtration]
\label{prop:two-boundary-exterior-filtration}
Assume temporarily that $q_a$ and $q_b$ are coprime, have simple roots,
and have pairwise distinct recurrence modes in exterior degree $S$; work over
a splitting field.  Let $V_a\defeq K^{d_a}$ and
$V_b\defeq K^{d_b}$ carry the companion operators $C_{q_a}$ and $C_{q_b}$.
For every admissible $\delta$, put
\begin{equation}\label{eq:bivariate-sector-space}
 W_\delta
 \defeq
 \bigwedge^{s_a+\delta}V_a
 \otimes
 \bigwedge^{s_b-\delta}V_b,
\end{equation}
and define the interval subspace
\begin{equation}\label{eq:bivariate-interval-space}
 \mathcal W_{k,\ell}
 \defeq
 \bigoplus_{\delta=-\ell}^{k}W_\delta.
\end{equation}
Under the Chinese-remainder identification
$C_{q_aq_b}\sim C_{q_a}\oplus C_{q_b}$, the normalized transfer
\begin{equation}\label{eq:bivariate-full-transfer}
 \mathcal T
 \defeq
 (-1)^S a_{s_a}b_{s_b}\,
 \bigwedge^S(C_{q_a}\oplus C_{q_b})
\end{equation}
preserves every $\mathcal W_{k,\ell}$, and
\begin{equation}\label{eq:bivariate-sector-charpoly}
 \operatorname{char}(\mathcal T|_{W_\delta})=\Psi_\delta.
\end{equation}
In the stable range of $n$, the sequence $(F_{k,\ell,n})_n$ is a scalar
matrix coefficient of $\mathcal T|_{\mathcal W_{k,\ell}}$.
Consequently $\Phi_{k,\ell}$ annihilates this sequence.
\end{proposition}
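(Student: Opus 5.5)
The plan follows the proof of Proposition~\ref{prop:defect-exterior-filtration}, now with two independent weightings, one per corner. The argument has three steps.

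\emph{Step 1: the transfer preserves each interval subspace.} The exterior-power decomposition gives
\[
 \bigwedge^S(V_a\oplus V_b)\simeq\bigoplus_\delta W_\delta,
\]
and $\bigwedge^S(C_{q_a}\oplus C_{q_b})$ acts block-diagonally on this sum. So $\mathcal T$ preserves each $W_\delta$, and hence every direct sum $\mathcal W_{k,\ell}$. On $W_\delta$ the restriction is
\[
 (-1)^{s_a+\delta}a_{s_a}\bigwedge^{s_a+\delta}C_{q_a}
 \;\otimes\;
 (-1)^{s_b-\delta}b_{s_b}\bigwedge^{s_b-\delta}C_{q_b},
\]
because the signs and leading coefficients split exactly as in the proof of Theorem~\ref{thm:graded-product}. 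Its eigenvalues are therefore the pairwise products of the modes of $\chi_{q_a,s_a+\delta}$ and $\chi_{q_b,s_b-\delta}$, which gives \eqref{eq:bivariate-sector-charpoly}.

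\emph{Step 2: membership in the full recurrence space.} Write
\[
 A_nB_n+xL_n+yR_n=C_n-(1-x)L_n-(1-y)R_n .
\]
Both $L_n$ and $R_n$ are fixed same-corner corrections by Proposition~\ref{prop:finite-product}. Lemma~\ref{lem:fixed-boundary-compound}, applied with $g=c$ and two correction variables, shows that each $F_{k,\ell,n}$ is a scalar matrix coefficient of $\mathcal T$, and therefore lies in $\bigoplus_\delta W_\delta$. What remains is to show that only the sectors with $-\ell\le\delta\le k$ occur.

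\emph{Step 3: localization to $\delta\in[-\ell,k]$ by dilation.} I dilate $a$ by $a^{\langle u\rangle}$ as in \eqref{eq:weight-dilation}. This multiplies every mode of $W_\delta$ by $u^{s_a+\delta}$. The same dilation holds for $L_n$ and $R_n$, since they are built from coefficients of $a^{\langle u\rangle}$ and $b$ and are exactly the defects of $a^{\langle u\rangle}$ and $b$. I then expand with weighted Cauchy--Binet on the bi-infinite factorization $\mathcal T(c)=\mathcal T(a)\mathcal T(b)$. The intermediate column set is
\[
 K\subseteq J^-\cup I_n\cup J^+,\qquad J^-=\{1-h_L,\dots,0\},\quad J^+=\{n+1,\dots,n+h_R\},
\]
with $|K\cap J^-|=k$ and $|K\cap J^+|=\ell$. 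This selection of $k$ left and $\ell$ right boundary columns corresponds to the coefficient of $x^ky^\ell$; the Cauchy--Binet terms with $K\cap J^\pm$ of the given sizes realize the coefficient extraction. By \eqref{eq:one-sided-minor-weight}, the $a$-minor carries the factor
\[
 u^{s_a n'}\,u^{\Sigma I_n-\Sigma K},
\]
where the factor $u^{s_a}$ per row comes from \eqref{eq:weight-dilation}, with the normalization bookkeeping fixed by a shift. Put $H=I_n\setminus K$, so that $|H|=k+\ell$. Then
\[
 \Sigma I_n-\Sigma K=\Sigma H-\Sigma P_- -\Sigma P_+ .
\]
Here $\Sigma P_-=O(1)$, $\Sigma P_+=\ell n+O(1)$, and
\[
 \Sigma H\in\bigl[O(1),\,(k+\ell)n+O(1)\bigr].
\]
Hence the exponent lies in $[-\ell n-O(1),\,kn+O(1)]$, after adding $s_an$. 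This is exactly the slope condition \eqref{eq:weight-slope-bounds} with $\alpha=s_a-\ell$ and $\beta=s_a+k$. Lemma~\ref{lem:weight-separation} then restricts the nonzero components of $F_{k,\ell,n}$ to $s_a-\ell\le s_a+\delta\le s_a+k$, which places the sequence in $\mathcal W_{k,\ell}$. Cayley--Hamilton applied to $\mathcal T|_{\mathcal W_{k,\ell}}$, whose characteristic polynomial is $\Phi_{k,\ell}$, gives the annihilation.

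The main obstacle is the exponent bookkeeping in Step 3. The difficulty is confirming that the $x^ky^\ell$ coefficient corresponds precisely to the Cauchy--Binet terms with $|K\cap J^-|=k$ and $|K\cap J^+|=\ell$, and that the dilation normalization of the transfer matches the global factor $u^{s_an}$. I would check both against the one-sided case ($s_a=0$, $\ell=0$) and the recentered endpoints $\delta=k$ and $\delta=-\ell$ before trusting the general window.
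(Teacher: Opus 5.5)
Your proposal is correct and follows the paper's proof step for step. The three steps are the same: the block-diagonal sector decomposition gives the characteristic polynomials $\Psi_\delta$, and Lemma~\ref{lem:fixed-boundary-compound} places the sequence in the full compound space. The localization is the weighted Cauchy--Binet expansion over $J^-\cup I_n\cup J^+$ after dilating $a$, which yields the slopes $\alpha=s_a-\ell$ and $\beta=s_a+k$ for Lemma~\ref{lem:weight-separation}.

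The normalization you were unsure about needs no shift. Each entry satisfies $a^{\langle u\rangle}_{j-i}=u^{s_a+i-j}a_{j-i}$, so the $a$-minor carries exactly $u^{ns_a+\Sigma I_n-\Sigma K}$, as in \eqref{eq:bivariate-minor-weight}.
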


\begin{proof}
The exterior-power decomposition is
\[
 \bigwedge^S(V_a\oplus V_b)
 \simeq
 \bigoplus_{\delta=-h_R}^{h_L}
 \left(
  \bigwedge^{s_a+\delta}V_a
  \otimes
  \bigwedge^{s_b-\delta}V_b
 \right),
\]
and the normalization in \eqref{eq:bivariate-full-transfer} is the one from
Proposition~\ref{prop:background-recurrence}.  The graded multiplication law
therefore gives \eqref{eq:bivariate-sector-charpoly}.

We first place $F_{k,\ell}$ in the full compound space.  From
\eqref{eq:finite-product},
\[
 A_nB_n+xL_n+yR_n
 =C_n-(1-x)L_n-(1-y)R_n.
\]
The two corrections are supported in fixed boundary windows, so determinant
multilinearity expresses every coefficient $F_{k,\ell,n}$ as a fixed linear
combination of boundary minors of $C_n$.  By
Lemma~\ref{lem:fixed-boundary-compound}, $F_{k,\ell}$ is therefore a scalar
matrix coefficient of the full transfer \eqref{eq:bivariate-full-transfer}.
It remains only to determine which sector blocks can occur.

Let
\[
 I_n\defeq\{1,\ldots,n\},
 \qquad
 J_n^-\defeq\{1-h_L,\ldots,0\},
 \qquad
 J_n^+\defeq\{n+1,\ldots,n+h_R\}.
\]
The missing-index formula may be weighted before Cauchy--Binet: give an
intermediate index weight $x$ on $J_n^-$, weight $1$ on $I_n$, and weight
$y$ on $J_n^+$.  Hence, after the dilation
\eqref{eq:weight-dilation},
\begin{equation}\label{eq:bivariate-weighted-CB}
 F_{k,\ell,n}^{\langle u\rangle}
 =
 \sum_{\substack{K\subseteq J_n^-\cup I_n\cup J_n^+,\ |K|=n\\
                   |K\cap J_n^-|=k,\ |K\cap J_n^+|=\ell}}
 \det\mathcal T(a^{\langle u\rangle})[I_n,K]\,
 \det\mathcal T(b)[K,I_n].
\end{equation}
For every term, row and column factorization gives
\begin{equation}\label{eq:bivariate-minor-weight}
 \det\mathcal T(a^{\langle u\rangle})[I_n,K]
 =u^{ns_a+\Sigma I_n-\Sigma K}
  \det\mathcal T(a)[I_n,K].
\end{equation}
Put
\[
 P_-\defeq K\cap J_n^-,
 \qquad
 P_+\defeq K\cap J_n^+,
 \qquad
 H\defeq I_n\setminus K.
\]
Then $|P_-|=k$, $|P_+|=\ell$, and $|H|=k+\ell$.  Put
$Q_+\defeq P_+-n\subseteq\{1,\ldots,h_R\}$.  A direct sum calculation
gives
\begin{equation}\label{eq:bivariate-weight-exponent}
 \Sigma I_n-\Sigma K
 =-\ell n+\Sigma H-\Sigma P_- -\Sigma Q_+.
\end{equation}
The last two sums are bounded independently of $n$, while
\[
 \Sigma H=O(1)
 \quad\text{at its lower extreme},
 \qquad
 \Sigma H=(k+\ell)n+O(1)
 \quad\text{at its upper extreme}.
\]
Therefore
\begin{equation}\label{eq:bivariate-weight-bounds}
 \nu_0(F_{k,\ell,n}^{\langle u\rangle})
 \ge (s_a-\ell)n-O(1),
 \qquad
 \deg_\infty(F_{k,\ell,n}^{\langle u\rangle})
 \le (s_a+k)n+O(1).
\end{equation}

On $W_\delta$, the exterior-degree weight is $s_a+\delta$.  Applying
Lemma~\ref{lem:weight-separation} to
\eqref{eq:bivariate-weight-bounds} shows that a nonzero component must satisfy
\[
 s_a-\ell\le s_a+\delta\le s_a+k,
\]
or equivalently $-\ell\le\delta\le k$.  Thus
$F_{k,\ell,n}$ is a scalar matrix coefficient of
$\mathcal T|_{\mathcal W_{k,\ell}}$.  Cayley--Hamilton then gives the
annihilator \eqref{eq:bivariate-cumulative-polynomial}.
\end{proof}

\begin{lemma}[Support-reducing root degeneration]
\label{lem:bivariate-root-degeneration}
Use the root coordinates
\[
 q_a(z)=a_{s_a}\prod_{\alpha=1}^{d_a}(z-\rho_\alpha),
 \qquad
 q_b(z)=b_{s_b}\prod_{\beta=1}^{d_b}(z-\sigma_\beta).
\]
Fix $0\le k\le h_L$, $0\le\ell\le h_R$, and a sector index
$-\ell\le\delta\le k$.  Let
\[
 |I|=s_a+\delta,
 \qquad
 |J|=s_b-\delta,
\]
and choose
\begin{align*}
 Z_a&\subseteq I^c, & |Z_a|&=r_a-k,
 &M_a&\subseteq I, & |M_a|&=s_a-\ell,\\
 Z_b&\subseteq J^c, & |Z_b|&=r_b-\ell,
 &M_b&\subseteq J, & |M_b|&=s_b-k.
\end{align*}
Define deformed cores by
\begin{align}
 q_{a,\varepsilon}(z)
 &\defeq
 \varepsilon^{s_a-\ell}a_{s_a}
 \prod_{\alpha\in Z_a}(z-\varepsilon\rho_\alpha)
 \prod_{\alpha\in M_a}(z-\varepsilon^{-1}\rho_\alpha)
 \prod_{\alpha\notin Z_a\cup M_a}(z-\rho_\alpha),
 \label{eq:qa-eps}\\
 q_{b,\varepsilon}(z)
 &\defeq
 \varepsilon^{s_b-k}b_{s_b}
 \prod_{\beta\in Z_b}(z-\varepsilon\sigma_\beta)
 \prod_{\beta\in M_b}(z-\varepsilon^{-1}\sigma_\beta)
 \prod_{\beta\notin Z_b\cup M_b}(z-\sigma_\beta).
 \label{eq:qb-eps}
\end{align}
Then the Laurent symbols
$a_\varepsilon(z)\defeq z^{-r_a}q_{a,\varepsilon}(z)$ and
$b_\varepsilon(z)\defeq z^{-r_b}q_{b,\varepsilon}(z)$ have coefficientwise
limits $a^*,b^*$ with exact supports
\begin{equation}\label{eq:bivariate-reduced-supports}
 a^*:[-k,\ell],
 \qquad
 b^*:[-\ell,k].
\end{equation}
Moreover, a degree-$(s_a+\delta')$ mode of $q_{a,\varepsilon}$ indexed by
$I'$ has $\varepsilon$-valuation
\begin{equation}\label{eq:a-mode-valuation}
 (s_a-\ell)-|I'\cap M_a|+|I'\cap Z_a|,
\end{equation}
which is zero exactly when
$M_a\subseteq I'$ and $I'\cap Z_a=\varnothing$.  The analogous statement for
$q_{b,\varepsilon}$ has valuation
\begin{equation}\label{eq:b-mode-valuation}
 (s_b-k)-|J'\cap M_b|+|J'\cap Z_b|.
\end{equation}
Consequently the nonzero surviving product modes, as
$-\ell\le\delta'\le k$, are exactly the modes of the reduced pair
$a^*,b^*$, with exterior degrees $\ell+\delta'$ and $k-\delta'$, respectively.
\end{lemma}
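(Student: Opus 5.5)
The argument is a direct bookkeeping computation on the deformed cores; no earlier recurrence result is needed, only Vieta's formulas and valuations of products of roots.

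\emph{Step 1: the supports of the limits.} Expand $q_{a,\varepsilon}$ by elementary symmetric functions. The coefficient of $z^{d_a-m}$ is $(-1)^m$ times the leading factor $\varepsilon^{s_a-\ell}a_{s_a}$ times $e_m$ of the deformed roots. An $m$-subset $T$ of indices contributes a monomial of $\varepsilon$-valuation
\[
 (s_a-\ell)-|T\cap M_a|+|T\cap Z_a|.
\]
The least valuation among $m$-subsets is obtained by taking as many $M_a$ roots as possible and avoiding $Z_a$ as long as possible. So for $m\le s_a-\ell$ the least valuation is $s_a-\ell-m\ge0$; it is zero only at $m=s_a-\ell$, where the unique minimizer $T=M_a$ gives a nonzero leading term.

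For $s_a-\ell\le m\le d_a-(r_a-k)$, the subsets $T\supseteq M_a$ with $T\cap Z_a=\varnothing$ give valuation $0$. For larger $m$ the valuation is positive. At $m=d_a-(r_a-k)$ the zero-valuation subset is unique, namely $T=Z_a^c$, so that coefficient also has a nonzero limit.

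Thus the limit core has nonzero coefficients exactly in the $z$-degrees $d_a-m$ for $s_a-\ell\le m\le d_a-r_a+k$, that is, $z^{r_a-k},\ldots,z^{r_a+\ell}$. After multiplying by $z^{-r_a}$ the support of $a^*$ is exactly $[-k,\ell]$. The count works because $d_a-r_a+k-(s_a-\ell)=k+\ell$ and the sets $M_a$, $Z_a$ are disjoint, since $M_a\subseteq I$ and $Z_a\subseteq I^c$.

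The same computation for $b$, with $|M_b|=s_b-k$ and $|Z_b|=r_b-\ell$, gives the support $[-\ell,k]$. Once the two endpoint coefficients are identified as single monomials, the limits are automatically coefficientwise.

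\emph{Step 2: the valuation of a mode.} A degree-$(s_a+\delta')$ mode indexed by $I'$ is $(-1)^{|I'|}$ times the leading coefficient times $\prod_{I'}$ of the deformed roots. Its valuation is therefore literally $(s_a-\ell)-|I'\cap M_a|+|I'\cap Z_a|$. Since $|I'\cap M_a|\le|M_a|=s_a-\ell$, the valuation is $\ge0$, with equality iff $M_a\subseteq I'$ and $I'\cap Z_a=\varnothing$. The $b$ case is identical.

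\emph{Step 3: the surviving modes.} A product mode has nonzero limit iff both factor modes have valuation $0$; any positive valuation sends it to $0$, and none diverges.

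For the $a$-factor, write $I'=M_a\sqcup I''$ with $I''$ contained in the $k+\ell$ roots outside $Z_a\cup M_a$. These are exactly the roots of the reduced core $q_{a^*}$, whose degree is $k+\ell$ and whose leading coefficient is $a_{s_a}\prod_{M_a}(-\rho)$ by Step 1. Then $|I''|=s_a+\delta'-(s_a-\ell)=\ell+\delta'$. The limit of $(-1)^{|I'|}\varepsilon^{s_a-\ell}a_{s_a}\prod_{M_a}\varepsilon^{-1}\rho\prod_{I''}\rho$ equals $(-1)^{|I''|}\,\mathrm{lc}(q_{a^*})\prod_{I''}\rho$. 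This is precisely the degree-$(\ell+\delta')$ mode of $a^*$ in the sense of \eqref{eq:chi-qk}; the sign $(-1)^{|M_a|}$ is absorbed into the leading coefficient.

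Likewise the $b$-factor has degree $s_b-\delta'-(s_b-k)=k-\delta'$. As $\delta'$ ranges over $[-\ell,k]$ these degrees range over all of $0,\ldots,k+\ell$. Conversely, every reduced mode arises from a unique $(I',J')$, so the correspondence is bijective on surviving modes.

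\emph{Main obstacle.} The delicate step is Step 1: checking that the endpoint coefficients are single nonvanishing monomials and that no coefficient has negative valuation. Both reduce to the disjointness $M_a\cap Z_a=\varnothing$ together with the exact cardinalities. Steps 2 and 3 are then sign and normalization bookkeeping, which is matched against \eqref{eq:chi-qk} through the leading-coefficient identification.
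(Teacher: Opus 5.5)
Your proof is correct. Steps 2 and 3 are essentially the paper's own argument: the valuation is the prefactor exponent minus one per selected $M_a$ root plus one per selected $Z_a$ root. Stripping off the mandatory set $M_a$ leaves $\ell+\delta'$ (resp.\ $k-\delta'$) roots of the reduced cores. The factor $(-1)^{|M_a|}\prod_{M_a}\rho_\alpha$ is absorbed into the reduced leading coefficient.

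The difference is Step 1. You expand $q_{a,\varepsilon}$ in elementary symmetric functions and bound the valuation of every coefficient. The paper instead pairs one power of $\varepsilon$ with each $M_a$ factor: $\varepsilon(z-\varepsilon^{-1}\rho_\alpha)=\varepsilon z-\rho_\alpha\to-\rho_\alpha$, while $z-\varepsilon\rho_\alpha\to z$ for $\alpha\in Z_a$. The limit is then the explicit product
\[
 q_a^*(z)=a_{s_a}(-1)^{s_a-\ell}\Bigl(\prod_{\alpha\in M_a}\rho_\alpha\Bigr)z^{r_a-k}\prod_{\alpha\notin Z_a\cup M_a}(z-\rho_\alpha).
\]
This one line gives the exact support, the leading coefficient, and the root set of the reduced core at once. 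What you flag as the main obstacle therefore disappears.

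Your coefficient-by-coefficient route works, but it needs two small repairs.
\begin{itemize}
\item In Step 3 you assert ``by Step 1'' that the roots of the reduced core are exactly the $\rho_\alpha$ with $\alpha\notin Z_a\cup M_a$. Your Step 1 as written only identifies which endpoint coefficients survive. You should add that the valuation-zero part of the $z^{d_a-m}$ coefficient is $(-1)^m a_{s_a}\prod_{M_a}\rho_\alpha\,e_{m-|M_a|}(\rho_\alpha:\alpha\notin Z_a\cup M_a)$. This is Vieta's formula for the product displayed above.
\item Your claim that the limit core has nonzero coefficients in \emph{all} degrees $r_a-k,\ldots,r_a+\ell$ is too strong. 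Interior coefficients are, up to sign, $a_{s_a}\prod_{M_a}\rho_\alpha\, e_j(\cdot)$ of the remaining roots, and these can vanish at special root values. Exact support only requires the two endpoint coefficients to be nonzero. You do show this correctly: they are single monomials, nonzero because all $\rho_\alpha\ne0$ (as $q_a(0)\ne0$).
\end{itemize}
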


\begin{proof}
In \eqref{eq:qa-eps}, pair one power of the prefactor $\varepsilon$ with
each factor indexed by $M_a$.  Taking $\varepsilon\to0$ gives
\[
 q_a^*(z)
 =a_{s_a}(-1)^{s_a-\ell}
  \left(\prod_{\alpha\in M_a}\rho_\alpha\right)
  z^{r_a-k}
  \prod_{\alpha\notin Z_a\cup M_a}(z-\rho_\alpha).
\]
Its lowest nonzero degree is $r_a-k$ and its highest is $r_a+\ell$;
after multiplication by $z^{-r_a}$ this is exactly the first support in
\eqref{eq:bivariate-reduced-supports}.  The calculation for $q_b^*$ is
identical and gives lowest and highest degrees $r_b-\ell$ and $r_b+k$.

For a subset mode, the leading-coefficient prefactor contributes the first
term in \eqref{eq:a-mode-valuation}; a selected root from $M_a$ contributes
$-1$ to the valuation and a selected root from $Z_a$ contributes $+1$.
Since $|M_a|=s_a-\ell$, the valuation is nonnegative and vanishes exactly
under the stated containment and disjointness conditions.  Removing the
mandatory set $M_a$ from a surviving $(s_a+\delta')$-subset leaves
$\ell+\delta'$ roots of the minimal core of $a^*$; similarly one obtains
$k-\delta'$ roots for $b^*$.  The factor
$(-1)^{s_a-\ell}\prod_{M_a}\rho_\alpha$ in the leading coefficient of
$q_a^*$, and its analogue for $b^*$, show by direct substitution in
\eqref{eq:chi-qk} that the surviving mode values agree exactly, including
signs and normalization.
\end{proof}

\begin{lemma}[Top bidefect coefficient]\label{lem:bivariate-top-layer}
Let $a^*$ and $b^*$ have exact supports $[-k,\ell]$ and $[-\ell,k]$,
respectively, and let $F^*_{k,\ell,n}$ denote the coefficient of $x^ky^\ell$
in their interpolation \eqref{eq:bivariate-defect-interpolation}.  Then, for
$n\ge k+\ell$,
\begin{equation}\label{eq:bivariate-top-layer}
 F^*_{k,\ell,n}
 =
 (a^*_{-k}b^*_k)^k
 (a^*_{\ell}b^*_{-\ell})^\ell
 D_{n-k-\ell}(a^*b^*).
\end{equation}
\end{lemma}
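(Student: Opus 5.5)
The plan is to extract the coefficient of $x^ky^\ell$ by multilinearity in the rows, so that it factors by Laplace expansion into the two corner blocks times a clean Toeplitz determinant in the middle. For the reduced pair we have $r_a=k$, $s_a=\ell$, $r_b=\ell$ and $s_b=k$. Hence $h_L=\min(k,k)=k$ and $h_R=\min(\ell,\ell)=\ell$. By Proposition~\ref{prop:finite-product}:
\begin{itemize}
\item $L_n$ is supported in the upper-left $k\times k$ corner, where it equals $H_{a^*}^-H_{b^*}^+$;
\item $R_n$ is supported in the lower-right $\ell\times\ell$ corner, where, after reversal of the last rows and columns, it equals $H_{a^*}^+H_{b^*}^-$.
\end{itemize}
The hypothesis $n\ge k+\ell$ makes the row sets $\{1,\ldots,k\}$ and $\{n-\ell+1,\ldots,n\}$ disjoint, and likewise the column sets.

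First I use that $\det$ is linear in each row. Row $i\le k$ of $A_nB_n+xL_n+yR_n$ is (row $i$ of $A_nB_n$) $+\,x$(row $i$ of $L_n$), and there is no $y$-part, since $R_n$ lives only in rows $>n-\ell$. Symmetrically, rows $i>n-\ell$ carry only a $y$-part, and the middle rows carry neither. Since $x$ appears only in the first $k$ rows, each at most linearly, the $x^k$ coefficient must take all of those rows from $L_n$; similarly the $y^\ell$ coefficient takes the last $\ell$ rows from $R_n$. These rows vanish outside columns $1,\ldots,k$ and $n-\ell+1,\ldots,n$ respectively. A generalized Laplace expansion along these $k+\ell$ rows therefore leaves a single term:
\[
 F^*_{k,\ell,n}=\det(L\text{-block})\,\det(R\text{-block})\,\det\bigl(A_nB_n\bigr)[M,M],
 \qquad M\defeq\{k+1,\ldots,n-\ell\}.
\]
The Laplace sign is $+1$ because the row and column index sets coincide.

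Second, I identify the middle minor. By \eqref{eq:finite-product}, $(A_nB_n)_{ij}=(C_n)_{ij}$ whenever $i\in M$. Indeed, $L_n$ is confined to rows $\le r_a=k$ and $R_n$ to rows $>n-s_a=n-\ell$. Hence $(A_nB_n)[M,M]=\T_{n-k-\ell}(a^*b^*)$, which gives the factor $D_{n-k-\ell}(a^*b^*)$.

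Third, and this is the only delicate point, I evaluate the corner determinants together with their signs. At full size $k\times k$, the matrix $H_{a^*}^-$ with entries $a^*_{-(i+j-1)}$ vanishes for $i+j-1>k$. It is therefore anti-triangular with anti-diagonal entries $a^*_{-k}$, and its determinant is $(-1)^{k(k-1)/2}(a^*_{-k})^k$. Likewise $\det H_{b^*}^+=(-1)^{k(k-1)/2}(b^*_k)^k$. The signs cancel in the product, giving $(a^*_{-k}b^*_k)^k$.

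For the right block, the reversal of the last $\ell$ rows and columns is the same permutation applied to both rows and columns, so it does not change the determinant. The same anti-triangular argument for $H_{a^*}^+$ and $H_{b^*}^-$ then gives $(a^*_\ell b^*_{-\ell})^\ell$. Combining the three factors yields \eqref{eq:bivariate-top-layer}.
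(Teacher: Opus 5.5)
Your proposal is correct and follows the paper's proof essentially step for step. Multilinearity forces selection of both complete corner blocks; the complementary middle principal block equals the clean section $\T_{n-k-\ell}(a^*b^*)$ with positive Laplace sign; and the anti-triangular Hankel factors give the two corner determinants, with their signs cancelling. You only make explicit a few details the paper leaves implicit: the row-support argument for extracting the $x^ky^\ell$ coefficient, and the fact that reversing the last rows and columns by the same permutation leaves the right-corner determinant unchanged.
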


\begin{proof}
For these reduced supports the left defect is a square $k\times k$ block and
the right defect is a square $\ell\times\ell$ block.  In the coefficient of
$x^ky^\ell$, determinant multilinearity must therefore select both complete
corner blocks.  The remaining rows and columns are the common middle
principal interval $\{k+1,\ldots,n-\ell\}$.  Both defects vanish there, so
the remaining block of $A_n^*B_n^*$ equals the corresponding block of
$T_n(a^*b^*)$, namely $T_{n-k-\ell}(a^*b^*)$.

For the left defect, the two Hankel factors in the proof of
Proposition~\ref{prop:finite-product} are $k\times k$ anti-triangular matrices
with anti-diagonal entries $a^*_{-k}$ and $b^*_k$.  Their anti-triangular
signs cancel, giving determinant $(a^*_{-k}b^*_k)^k$.  The reversed right
corner gives in the same way $(a^*_{\ell}b^*_{-\ell})^\ell$.  Since the two
selected row sets equal the two selected column sets, the complementary
Laplace sign is positive.  Multiplying the three determinants proves
\eqref{eq:bivariate-top-layer}.
\end{proof}

\begin{theorem}[Bivariate defect-order filtration]
\label{thm:bivariate-defect-filtration}
For every $0\le k\le h_L$ and $0\le\ell\le h_R$,
\begin{equation}\label{eq:bivariate-annihilation}
 \Phi_{k,\ell}(L)F_{k,\ell}=0.
\end{equation}
For generic coefficients of $a$ and $b$, $\Phi_{k,\ell}$ is the minimal
recurrence polynomial of $(F_{k,\ell,n})_n$.  Hence the generic minimal
recurrence order is
\begin{equation}\label{eq:bivariate-order}
 \operatorname{ord}_{\rm gen}(F_{k,\ell})
 =
 \sum_{\delta=-\ell}^{k}
 \binom{d_a}{s_a+\delta}
 \binom{d_b}{s_b-\delta}.
\end{equation}
Moreover, for $k\ge1$ the sequence
\begin{equation}\label{eq:bivariate-horizontal-filter}
 \Phi_{k-1,\ell}(L)F_{k,\ell}
\end{equation}
has generic minimal recurrence polynomial $\Psi_k$, while for $\ell\ge1$
\begin{equation}\label{eq:bivariate-vertical-filter}
 \Phi_{k,\ell-1}(L)F_{k,\ell}
\end{equation}
has generic minimal recurrence polynomial $\Psi_{-\ell}$.
\end{theorem}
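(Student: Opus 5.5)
The proof will run parallel to that of Theorem~\ref{thm:defect-order-filtration}, using the two lemmas just established.

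\textbf{Annihilation.} On the coprime, simple-root, distinct-mode locus, Proposition~\ref{prop:two-boundary-exterior-filtration} shows that $F_{k,\ell}$ is a scalar matrix coefficient of $\mathcal T|_{\mathcal W_{k,\ell}}$. The characteristic polynomial of this restriction is $\prod_{\delta=-\ell}^{k}\Psi_\delta=\Phi_{k,\ell}$, so Cayley--Hamilton gives \eqref{eq:bivariate-annihilation} there. The identity $\Phi_{k,\ell}(L)F_{k,\ell}=0$ is polynomial in the coefficients of $a$ and $b$, and the locus is Zariski dense. Hence the identity extends to every exact pair.

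\textbf{Generic sharpness.} Work in the independent root coordinates $\rho_\alpha,\sigma_\beta$ of Lemma~\ref{lem:bivariate-root-degeneration}. There the modes of $\Phi_{k,\ell}$ are distinct monomials, so $\Phi_{k,\ell}$ is squarefree. Suppose some mode $\lambda$, indexed by $(I,J)$ in sector $\delta\in[-\ell,k]$, were generically absent from $F_{k,\ell}$. Then $P_\lambda\defeq\Phi_{k,\ell}/(t-\lambda)$ would annihilate $F_{k,\ell}$.

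To rule this out, choose $Z_a,M_a,Z_b,M_b$ compatible with $(I,J)$ as in Lemma~\ref{lem:bivariate-root-degeneration} and pass to the limit $\varepsilon\to0$.
\begin{itemize}
\item The cardinality constraints are satisfiable because $\delta\ge-\ell$ gives $|I|\ge s_a-\ell$ and $|I^c|=r_a-\delta\ge r_a-k$; the $b$ side is analogous.
\item The coefficients $F_{k,\ell,n}$ are polynomial in the band coefficients, and these have limits as $\varepsilon\to0$. So the identity $P_\lambda(L)F_{k,\ell}=0$ specializes, after clearing the $\varepsilon$-powers in the normalized modes, to the reduced pair $a^*,b^*$.
\item In the limit, $\lambda$ has valuation zero and survives, while modes of positive valuation tend to $0$. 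Thus $P_\lambda^*=t^M\Phi^*/(t-\lambda)$, where $\Phi^*$ is the product of the surviving nonzero modes.
\end{itemize}

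By Lemma~\ref{lem:bivariate-top-layer}, $F^*_{k,\ell,n}$ is a nonzero constant times $D_{n-k-\ell}(a^*b^*)$. The product $a^*b^*$ has support $[-k-\ell,k+\ell]$ and core of degree $2(k+\ell)$ with the remaining roots generic. Proposition~\ref{prop:background-recurrence} together with Theorem~\ref{thm:graded-product} then says its minimal polynomial is exactly $\Phi^*$: the graded decomposition in exterior degree $k+\ell$ of the reduced pair runs over precisely the surviving sectors $\delta'\in[-\ell,k]$ described in the lemma. Since $\Phi^*(0)\ne0$, $\Phi^*$ does not divide $P^*_\lambda$, which is a contradiction. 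Taking degrees then gives \eqref{eq:bivariate-order}.

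\textbf{Main obstacle.} The delicate point is this specialization step. Two things must be checked:
\begin{itemize}
\item Genericity of the reduced core holds: its roots are a subset of the independent $\rho,\sigma$ together with zeros stripped by the support shift. So Proposition~\ref{prop:background-recurrence}'s minimality does apply, possibly on a further Zariski-open set.
\item The limiting minimal-polynomial argument needs care. The specialized annihilator identity holds in the limit, whereas minimality is a property of $F^*$ itself, so no semicontinuity issue arises.
\end{itemize}

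\textbf{Filtered pieces.} For the filtered sequences, $\Phi_{k-1,\ell}(L)$ kills the components of $F_{k,\ell}$ in $W_\delta$ for $-\ell\le\delta\le k-1$. It acts invertibly on the $W_k$ component, because generically the mode sets of distinct sectors are disjoint. Sharpness shows every mode of $\Psi_k$ has nonzero amplitude, so the minimal polynomial is $\Psi_k$. The vertical filter is symmetric, isolating $W_{-\ell}$ and $\Psi_{-\ell}$.
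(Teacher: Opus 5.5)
Your proposal is correct and follows the paper's proof essentially step for step. Annihilation comes from Proposition~\ref{prop:two-boundary-exterior-filtration} plus Cayley--Hamilton and Zariski density. Sharpness comes from removing a single mode, degenerating via Lemma~\ref{lem:bivariate-root-degeneration}, and contradicting the minimality of $\Phi^*$ for the top-bidefect sequence of Lemma~\ref{lem:bivariate-top-layer}. The filtered statements follow by killing the earlier sectors.

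The only imprecision is your phrase about ``clearing the $\varepsilon$-powers''. Nothing needs to be cleared: the valuations in Lemma~\ref{lem:bivariate-root-degeneration} are nonnegative, so the coefficients of $P_\lambda$ already have finite limits (the paper's ``no poles''), and that is exactly what the specialization requires.
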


\begin{proof}
The annihilation statement follows from
Proposition~\ref{prop:two-boundary-exterior-filtration}; as before, the
identity extends from the Zariski-dense distinct-mode locus because it is
algebraic in the coefficients.

For generic sharpness, use the root coordinates from
Lemma~\ref{lem:bivariate-root-degeneration}.  Fix a sector
$-\ell\le\delta\le k$ and a mode indexed by
\[
 I\subseteq[d_a],\qquad |I|=s_a+\delta,
 \qquad
 J\subseteq[d_b],\qquad |J|=s_b-\delta.
\]
Its value is
\begin{equation}\label{eq:bivariate-fixed-mode}
 \lambda
 \defeq
 (-1)^S a_{s_a}b_{s_b}
 \prod_{\alpha\in I}\rho_\alpha
 \prod_{\beta\in J}\sigma_\beta.
\end{equation}
Choose the sets $Z_a,M_a,Z_b,M_b$ from
Lemma~\ref{lem:bivariate-root-degeneration}; the required cardinalities are
possible exactly because $-\ell\le\delta\le k$.  Under the resulting
$\varepsilon$-degeneration, the chosen mode has a finite nonzero limit
$\lambda^*$, while the complete set of nonzero surviving modes is exactly the
mode set of the reduced pair
\[
 a^*:[-k,\ell],
 \qquad
 b^*:[-\ell,k].
\]
Thus the nonzero part of the specialized polynomial $\Phi_{k,\ell}$ is
\begin{equation}\label{eq:bivariate-reduced-polynomial}
 \Phi^*(t)
 \defeq
 \chi_{q_{a^*}q_{b^*},\,k+\ell}(t),
\end{equation}
and all other roots specialize to zero.

By Lemma~\ref{lem:bivariate-top-layer},
\[
 F^*_{k,\ell,n}
 =
 (a^*_{-k}b^*_k)^k
 (a^*_{\ell}b^*_{-\ell})^\ell
 D_{n-k-\ell}(a^*b^*).
\]
The scalar prefactor is nonzero.  Since $a^*b^*$ has exact balanced support
$[-(k+\ell),k+\ell]$, Proposition~\ref{prop:background-recurrence} shows that
$\Phi^*$ is generically the minimal recurrence polynomial of this specialized
sequence.

Suppose that the original mode $\lambda$ were absent generically from
$F_{k,\ell}$.  On the generic simple-mode locus, the proper divisor
\[
 P_\lambda(t)\defeq\frac{\Phi_{k,\ell}(t)}{t-\lambda}
\]
would also annihilate $F_{k,\ell}$.  Lemma~\ref{lem:bivariate-root-degeneration}
shows that the specialization has no poles and, for some $M\ge0$, gives
\begin{equation}\label{eq:bivariate-removed-specialization}
 P_\lambda^*(t)
 =t^M\frac{\Phi^*(t)}{t-\lambda^*}.
\end{equation}
But $\Phi^*(0)\ne0$, so $\Phi^*$ does not divide $P_\lambda^*$.  This
contradicts the minimality of $\Phi^*$ for the top-bidefect sequence.  Every
mode in every sector $-\ell\le\delta\le k$ therefore occurs generically,
proving both generic minimality and \eqref{eq:bivariate-order}.

Finally, Proposition~\ref{prop:two-boundary-exterior-filtration} gives
\[
 \mathcal W_{k,\ell}/\mathcal W_{k-1,\ell}\simeq W_k,
 \qquad
 \mathcal W_{k,\ell}/\mathcal W_{k,\ell-1}\simeq W_{-\ell}.
\]
Applying the preceding cumulative sector factors kills exactly the earlier
blocks, while generic sharpness guarantees that every mode of the newly added
block survives.  This proves the two filtered statements.
\end{proof}

\begin{remark}[Interval rather than a new bidegree]
\label{rem:bivariate-interval}
The bivariate filtration does not create an additional mixed sector.  Instead,
left and right defect orders extend one interval of the existing recentering
ladder from opposite ends:
\[
 \mathcal W_{k,\ell}
 =W_{-\ell}\oplus\cdots\oplus W_k.
\]
For $k,\ell\ge1$,
\begin{equation}\label{eq:no-mixed-bidegree}
 \mathcal W_{k,\ell}
 =
 \mathcal W_{k-1,\ell}+\mathcal W_{k,\ell-1},
\end{equation}
so the corresponding double associated-graded quotient is zero.  The two boundary directions therefore recover the same one-dimensional defect-index
ladder already produced by Laurent recentering.  At the top bidegree,
\[
 \Phi_{h_L,h_R}=\chi_{q_aq_b,S},
\]
so the full interval recovers the entire canonical recurrence polynomial.
When one corner rank is zero, the interval has only one moving endpoint and
the construction reduces, after reindexing, to the one-sided defect-order
filtration of Theorem~\ref{thm:defect-order-filtration}.
\end{remark}

\section{The quadratic-core pentadiagonal prototype}\label{sec:pentadiagonal}

Five-diagonal Toeplitz determinants have long been treated by explicit
formulas, generating functions, and recurrence methods
\cite{MarrVineyard1988,Sweet,JiaYangLi2016}, and factorizations of
five-diagonal matrices through tridiagonal Toeplitz factors have also been
used in numerical linear algebra \cite{DieleLopez1998}.  The purpose of the
present prototype is therefore not to rederive the five-diagonal theory, but
to show how its six generic recurrence modes split canonically into the three
graded sectors $1+4+1$ and how the two extreme sectors arise as boundary
defect indices of recentered factorizations.

Let
\begin{equation}\label{eq:tri-symbols}
 a(z)\defeq uz^{-1}+a_0+vz,
 \qquad
 b(z)\defeq xz^{-1}+b_0+yz.
\end{equation}
Their polynomial cores are the quadratic polynomials
\[
 q_a(z)\defeq u+a_0z+vz^2,
 \qquad
 q_b(z)\defeq x+b_0z+yz^2.
\]
Their product
\[
 c(z)\defeq a(z)b(z)=\sum_{j=-2}^2c_jz^j
\]
has polynomial core
\[
 q_c(z)\defeq z^2c(z)=q_a(z)q_b(z)
\]
and coefficients
\begin{align}
 c_{-2}&=ux,
 &c_{-1}&=ub_0+a_0x,\notag\\
 c_0&=uy+a_0b_0+vx,
 &c_1&=a_0y+vb_0,
 &c_2&=vy.
 \label{eq:penta-coeffs}
\end{align}
Put
\[
 A_n\defeq\T_n(a),\qquad B_n\defeq\T_n(b),\qquad C_n\defeq\T_n(c),
\]
and
\[
 \kappa_L\defeq uy,
 \qquad
 \kappa_R\defeq vx.
\]
Let $E_{ij}$ denote the matrix unit with a single $1$ in position $(i,j)$
and zeros elsewhere.  Proposition~\ref{prop:finite-product} becomes the exact
identity
\begin{equation}\label{eq:tri-product-corners}
 A_nB_n=C_n-\kappa_L E_{11}-\kappa_R E_{nn}.
\end{equation}

Let
\[
 X_n\defeq\det A_n,
 \qquad
 Y_n\defeq\det B_n,
 \qquad
 D_n\defeq\det C_n,
\]
with $D_0\defeq1$ and $D_{-1}\defeq0$.  The location of the two defects makes
this case unusually simple.  In the multilinear expansion of
$\det(C_n-\kappa_LE_{11}-\kappa_RE_{nn})$, one may select neither corner
correction, exactly one of them, or both.  Selecting the correction at
$(1,1)$ deletes the first row and column of the clean Toeplitz section and
leaves $C_{n-1}$; selecting the correction at $(n,n)$ does the same at the
opposite end; selecting both leaves $C_{n-2}$.  Thus all four contributions
remain within the same clean Toeplitz determinant sequence:
\begin{align}
 X_nY_n
 &=D_n-\kappa_LD_{n-1}-\kappa_RD_{n-1}
   +\kappa_L\kappa_RD_{n-2}\notag\\
 &=D_n-(\kappa_L+\kappa_R)D_{n-1}
   +\kappa_L\kappa_R D_{n-2}.
 \label{eq:scalar-filter}
\end{align}
This closure on principal Toeplitz sections is special to defects placed at
diagonal corners.  A deeper one-column correction produces nonprincipal
same-corner Toeplitz minors, as shown by
\eqref{eq:one-column-cofactor-expansion}; Appendix~\ref{app:boundary-closure}
shows that these standard-sector minors can generically be eliminated in
favor of a bounded block of consecutive clean determinant shifts.  By
contrast, the endpoint $M_{n,n}(a)=D_n(za)$ of the Cauchy--Binet chain in
Proposition~\ref{prop:cauchy-binet-chain} is a recentered adjacent-profile
sequence and is deliberately outside that statement.  In the present
pentadiagonal identity the diagonal-corner geometry is already exceptional
enough to give the explicit $D_{n-1}$ and $D_{n-2}$ closure in
\eqref{eq:scalar-filter}.
Equivalently, let $L$ denote the forward-shift operator, defined by
$L D_n\defeq D_{n+1}$.  Then
\begin{equation}\label{eq:shift-filter}
 X_{n+2}Y_{n+2}=(L-\kappa_L)(L-\kappa_R)D_n.
\end{equation}
For two power series, define their Hadamard product coefficientwise by
\[
 \left(\sum_{n\ge0}x_nz^n\right)\odot
 \left(\sum_{n\ge0}y_nz^n\right)
 \defeq
 \sum_{n\ge0}x_ny_nz^n.
\]
At the generating-function level, the scalar identity becomes
\begin{equation}\label{eq:hadamard-gf}
 \sum_{n\ge0}D_nz^n
 =
 \frac{
   \left(\sum_{n\ge0}X_nz^n\right)
   \odot
   \left(\sum_{n\ge0}Y_nz^n\right)
 }{(1-\kappa_L z)(1-\kappa_R z)}.
\end{equation}

The standard order-two determinant recurrences for $A_n$ and $B_n$ have recurrence polynomials
\begin{equation}\label{eq:fg}
 f(t)\defeq t^2-a_0t+uv,
 \qquad
 g(t)\defeq t^2-b_0t+xy.
\end{equation}
Their composed product is
\begin{align}
 (f\boxtimes g)(t)
 ={}&t^4-a_0b_0t^3
 +(a_0^2xy+b_0^2uv-2uvxy)t^2\notag\\
 &-a_0b_0uvxy\,t+(uvxy)^2.
 \label{eq:quartic-sector}
\end{align}
The three sectors $\delta=-1,0,1$ in
Theorem~\ref{thm:defect-index} have dimensions $1,4,1$.  Using
\eqref{eq:extreme-profiles}, they give
\begin{equation}\label{eq:1-4-1}
 \chi_{q_c,2}(t)
 =(t-\kappa_L)\,(f\boxtimes g)(t)\,(t-\kappa_R).
\end{equation}
Thus the familiar generic degree six is decomposed as
\[
 6=1+4+1.
\]

Expanding \eqref{eq:1-4-1} in the product coefficients
\eqref{eq:penta-coeffs} gives
\begin{align}
 \chi_{q_c,2}(t)
 ={}&t^6-c_0t^5
 +(c_{-1}c_1-c_{-2}c_2)t^4\notag\\
 &-(c_{-2}c_1^2+c_{-1}^2c_2-2c_{-2}c_0c_2)t^3\notag\\
 &+c_{-2}c_2(c_{-1}c_1-c_{-2}c_2)t^2\notag\\
 &-c_0(c_{-2}c_2)^2t+(c_{-2}c_2)^3.
 \label{eq:sweet-poly}
\end{align}
This is the recurrence polynomial of Sweet's order-six recurrence
\cite{Sweet}.  Hence the pentadiagonal recurrence can be reconstructed from
two quadratic recurrences, their composed product, and the two one-dimensional
boundary sectors.

\begin{remark}[Exterior interpretation]
Let $V_a\defeq K^2$ and $V_b\defeq K^2$ be the vector spaces on which the
companion matrices of $q_a$ and $q_b$ are viewed as linear maps.  Then
\[
 \bigwedge^2(V_a\oplus V_b)
 \simeq
 \bigwedge^2V_a
 \oplus(V_a\otimes V_b)
 \oplus\bigwedge^2V_b.
\]
The three summands have dimensions $1,4,1$ and correspond exactly to the
three recenterings $\delta=-1,0,1$.
\end{remark}

\section{Arbitrary products and the defect-index polytope}\label{sec:multifactor}

Let
\begin{equation}\label{eq:many-factors}
 a_j(z)\defeq\sum_{\nu=-r_j}^{s_j}a_{j,\nu}z^\nu,
 \qquad j=1,\ldots,m,
\end{equation}
where $[-r_j,s_j]$ is the exact support interval of $a_j$.  Put
\[
 d_j\defeq r_j+s_j,
 \qquad
 q_j(z)\defeq z^{r_j}a_j(z),
\]
and
\[
 c(z)\defeq\prod_{j=1}^m a_j(z),
 \qquad
 Q(z)\defeq\prod_{j=1}^m q_j(z).
\]
Let
\[
 R\defeq\sum_{j=1}^m r_j,
 \qquad
 S\defeq\sum_{j=1}^m s_j,
 \qquad
 D\defeq R+S=\sum_{j=1}^m d_j.
\]
Then $Q(z)=z^Rc(z)$, and the Toeplitz determinant sequence
$\det\T_n(c)$ corresponds to exterior degree $S$.

\subsection{Multi-index sectors}

Choose integers $\delta_1,\ldots,\delta_m$ and recenter
\[
 \widetilde a_j(z)\defeq z^{\delta_j}a_j(z).
\]
The product of the recentered symbols remains $c$ exactly when
\begin{equation}\label{eq:delta-sum-zero}
 \sum_{j=1}^m\delta_j=0.
\end{equation}
The $j$th shifted factor remains a finite Laurent band exactly when
\begin{equation}\label{eq:delta-local-bounds}
 -s_j\le\delta_j\le r_j.
\end{equation}
Set
\begin{equation}\label{eq:k-from-delta}
 k_j\defeq s_j+\delta_j.
\end{equation}
Then \eqref{eq:delta-sum-zero}--\eqref{eq:delta-local-bounds} become
\begin{equation}\label{eq:multi-index-sector}
 0\le k_j\le d_j,
 \qquad
 \sum_{j=1}^m k_j=S.
\end{equation}
Thus admissible recenterings are in bijection with the finite set
\begin{equation}\label{eq:KS}
 \cK_S
 \defeq
 \left\{
 (k_1,\ldots,k_m)\in\mathbb Z^m:
 0\le k_j\le d_j,
 \ \sum_jk_j=S
 \right\}.
\end{equation}
Moreover,
\begin{equation}\label{eq:standard-factor-multi}
 z^{\delta_j}a_j(z)=z^{k_j-d_j}q_j(z),
\end{equation}
so each recentered factor is exactly the standard Laurent representative of
the degree-$k_j$ profile of $q_j$.

\begin{theorem}[Multi-factor recurrence decomposition]\label{thm:multi-factor}
Take $\mathbf k\in\cK_S$, with components $k_1,\ldots,k_m$, and set
\begin{equation}\label{eq:Psi-k}
 \Psi_{\mathbf k}(t)
 \defeq
 \chi_{q_1,k_1}\boxtimes\cdots\boxtimes\chi_{q_m,k_m}.
\end{equation}
Then
\begin{equation}\label{eq:multi-factor-product}
 \chi_{Q,S}(t)
 =\prod_{\mathbf k\in\cK_S}\Psi_{\mathbf k}(t).
\end{equation}
The determinant product associated with the recentering $\mathbf k$,
\begin{equation}\label{eq:multi-sector-sequence}
 Z_n^{(\mathbf k)}
 \defeq\prod_{j=1}^m
 \det\T_n\!\left(z^{k_j-d_j}q_j\right),
\end{equation}
is annihilated by $\Psi_{\mathbf k}$ and generically has this minimal
recurrence polynomial.
\end{theorem}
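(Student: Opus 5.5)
The plan has three parts: extend the graded multiplication law to $m$ factors, prove the annihilation statement factor by factor, and then prove generic sharpness by exhibiting the required modes in independent root coordinates.

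\emph{Step 1: the product formula.} I would prove \eqref{eq:multi-factor-product} by induction on $m$ using Theorem~\ref{thm:graded-product}. Alternatively, and more directly, I would repeat its root-multiset argument. Label the roots of $Q$ as the disjoint union of the root lists of $q_1,\ldots,q_m$. A $S$-subset then decomposes uniquely into subsets $I_j$ of the $j$th list, with sizes $k_j\defeq|I_j|$ satisfying $0\le k_j\le d_j$ and $\sum_j k_j=S$. These are exactly the conditions defining $\cK_S$. Since $S=\sum_jk_j$ and the leading coefficient of $Q$ is $\prod_j(q_j)_{d_j}$, the mode factors as
\[
 (-1)^S\Bigl(\prod_j(q_j)_{d_j}\Bigr)\prod_j\prod_{I_j}\rho
 =\prod_j\Bigl((-1)^{k_j}(q_j)_{d_j}\prod_{I_j}\rho\Bigr).
\]
This is a product of one degree-$k_j$ mode of each $q_j$. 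Grouping subsets by $\mathbf k$ gives $\Psi_{\mathbf k}$, using associativity of $\boxtimes$ on root multisets. The bijection between admissible recenterings and $\cK_S$ was already recorded in \eqref{eq:delta-sum-zero}--\eqref{eq:KS}.

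\emph{Step 2: annihilation.} By \eqref{eq:standard-factor-multi} and Proposition~\ref{prop:background-recurrence}, each factor $\det\T_n(z^{k_j-d_j}q_j)$ is annihilated by $\chi_{q_j,k_j}$. I would first work generically, over a splitting field, where $\chi_{q_j,k_j}$ has simple roots. There each factor is a linear combination of $\lambda^n$ over its modes $\lambda$, so the termwise product is a linear combination of $(\lambda_1\cdots\lambda_m)^n$. Every such exponential is killed by $\Psi_{\mathbf k}(L)$. The identity $\Psi_{\mathbf k}(L)Z^{(\mathbf k)}=0$ is polynomial in the coefficients, so it extends to all specializations with nonzero endpoint coefficients, as in the proof of Theorem~\ref{thm:defect-order-filtration}. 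For a root-free argument I could instead use that the Kronecker product of the normalized compound transfers has characteristic polynomial the composed product.

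\emph{Step 3: generic minimality.} Introduce independent root coordinates for all $q_j$. Each $\det\T_n(z^{k_j-d_j}q_j)$ then generically has all $\binom{d_j}{k_j}$ modes with nonzero amplitude and pairwise distinct modes, by the minimality part of Proposition~\ref{prop:background-recurrence}. The product modes $\prod_j\lambda_{j,I_j}$ are distinct monomials in the independent root coordinates, because the root variables of different factors are disjoint. Hence the expansion of $Z_n^{(\mathbf k)}$ in distinct exponentials has every coefficient equal to a product of nonzero amplitudes, so no product mode is missing. It follows that the minimal polynomial is all of $\Psi_{\mathbf k}$, with degree $\prod_j\binom{d_j}{k_j}$.

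The main obstacle is Step 3. One must check that nonvanishing of the individual amplitudes and distinctness of the product modes hold simultaneously on a nonempty Zariski-open set. I would handle this by noting that each condition is the nonvanishing of a nonzero polynomial, or rational function, in the independent coordinates; nonvanishing of the amplitudes comes from the minimality in Proposition~\ref{prop:background-recurrence}. A finite intersection of nonempty Zariski-open sets is nonempty, which gives the common generic locus.
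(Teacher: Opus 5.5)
Your proposal is correct and follows the paper's route. The paper obtains \eqref{eq:multi-factor-product} by partitioning an $S$-subset of the roots of $Q$ by occupation numbers (equivalently, iterating Theorem~\ref{thm:graded-product}), and gets annihilation of $Z_n^{(\mathbf k)}$ from the termwise product of modes used in Theorem~\ref{thm:defect-index}; your Step~3 (distinct monomial product modes in independent root coordinates, with nonzero amplitudes inherited factorwise from Proposition~\ref{prop:background-recurrence}) spells out the generic-minimality claim that the paper only asserts by reference to that argument.
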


\begin{proof}
Iterate Theorem~\ref{thm:graded-product}.  Equivalently, partition an
$S$-subset of the roots of $Q$ according to how many selected roots come
from each labelled factor $q_j$.  The resulting occupation numbers are
precisely the points of $\cK_S$, and the modes in a fixed sector are the
products encoded by \eqref{eq:Psi-k}.  Equation \eqref{eq:multi-sector-sequence}
follows from the same termwise-product argument used in
Theorem~\ref{thm:defect-index}.
\end{proof}

The dimension of a sector is
\begin{equation}\label{eq:sector-degree}
 \deg\Psi_{\mathbf k}
 =\prod_{j=1}^m\binom{d_j}{k_j}.
\end{equation}
Summing over all sectors gives the multivariate Vandermonde identity
\begin{equation}\label{eq:multi-vandermonde}
 \sum_{\mathbf k\in\cK_S}
 \prod_{j=1}^m\binom{d_j}{k_j}
 =\binom{D}{S}.
\end{equation}
By contrast, the number of sectors is obtained by coefficient extraction.
Write $[x^S]F(x)$ for the coefficient of $x^S$ in $F(x)$.  Then
\begin{equation}\label{eq:number-sectors}
 |\cK_S|
 =[x^S]\prod_{j=1}^m(1+x+\cdots+x^{d_j}).
\end{equation}

\subsection{Interface defect indices}

The shifts $\delta_j$ satisfy one linear relation, so it is natural to use
$m-1$ cumulative coordinates
\begin{equation}\label{eq:eta-def}
 \eta_j\defeq\delta_1+\cdots+\delta_j,
 \qquad j=1,\ldots,m-1,
\end{equation}
with
\[
 \eta_0\defeq0,\qquad \eta_m\defeq0.
\]
Write $\boldsymbol\eta\defeq(\eta_1,\ldots,\eta_{m-1})$.  Then
\begin{equation}\label{eq:delta-eta}
 \delta_j=\eta_j-\eta_{j-1},
\end{equation}
and
\begin{equation}\label{eq:partial-product-shift}
 \prod_{\ell=1}^j\widetilde a_\ell(z)
 =z^{\eta_j}\prod_{\ell=1}^j a_\ell(z).
\end{equation}
Thus $\eta_j$ is the net Laurent shift transported across the cut between
factors $j$ and $j+1$.  We call $\eta_j$ the \emph{interface defect index} at that
cut.

The local band constraints become
\begin{equation}\label{eq:defect-constraints}
 -s_j\le\eta_j-\eta_{j-1}\le r_j,
 \qquad j=1,\ldots,m,
\end{equation}
Hence the admissible defect indices are the integer points of the bounded
\emph{defect-index polytope}, namely the region cut out by the linear inequalities
\begin{equation}\label{eq:defect-polytope}
 \cP
 \defeq\left\{
 (\eta_1,\ldots,\eta_{m-1})\in\mathbb R^{m-1}:
 -s_j\le\eta_j-\eta_{j-1}\le r_j,\ 1\le j\le m
 \right\}.
\end{equation}
The affine map
\begin{equation}\label{eq:k-eta}
 k_j=s_j+\eta_j-\eta_{j-1},
 \qquad j=1,\ldots,m,
\end{equation}
induces a bijection
\begin{equation}\label{eq:polytope-sector-bijection}
 \cP\cap\mathbb Z^{m-1}\longleftrightarrow\cK_S.
\end{equation}

\begin{proposition}[Corner-rank meaning of the defect indices]\label{prop:cut-ranks}
Fix a cut after factor $j$, where $1\le j<m$.  Put
\[
 R_L\defeq\sum_{\ell\le j}r_\ell,
 \quad
 S_L\defeq\sum_{\ell\le j}s_\ell,
 \quad
 R_R\defeq\sum_{\ell>j}r_\ell,
 \quad
 S_R\defeq\sum_{\ell>j}s_\ell,
\]
and
\begin{equation}\label{eq:cut-h}
 h_{L,j}\defeq\min(R_L,S_R),
 \qquad
 h_{R,j}\defeq\min(S_L,R_R).
\end{equation}
For the recentered factorization determined by $\boldsymbol\eta$, the two
corner defects at this cut have ranks
\begin{equation}\label{eq:cut-rank-shift}
 h_{L,j}(\eta_j)\defeq h_{L,j}-\eta_j,
 \qquad
 h_{R,j}(\eta_j)\defeq h_{R,j}+\eta_j.
\end{equation}
Consequently
\begin{equation}\label{eq:eta-marginal}
 -h_{R,j}\le\eta_j\le h_{L,j}.
\end{equation}
\end{proposition}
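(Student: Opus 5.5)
The plan is to reduce Proposition~\ref{prop:cut-ranks} to the two-factor boundary-rank ladder, Proposition~\ref{prop:rank-ladder}, by grouping the factors on either side of the cut.

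\emph{Step 1: group the factors.} Put
\[
 \alpha_j(z)\defeq\prod_{\ell\le j}a_\ell(z),
 \qquad
 \beta_j(z)\defeq\prod_{\ell>j}a_\ell(z).
\]
Products of exact Laurent bands have exact supports, because the extreme coefficients multiply and are nonzero. Hence $\alpha_j$ has exact support $[-R_L,S_L]$ and $\beta_j$ has exact support $[-R_R,S_R]$, and $c=\alpha_j\beta_j$.

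\emph{Step 2: apply the two-factor results.} By Proposition~\ref{prop:finite-product} applied to the pair $(\alpha_j,\beta_j)$, the corner defects of $\T_n(\alpha_j)\T_n(\beta_j)-\T_n(c)$ have ranks
\[
 \min(R_L,S_R)=h_{L,j},
 \qquad
 \min(S_L,R_R)=h_{R,j},
\]
for large $n$. This is how I interpret ``the two corner defects at this cut'': they are the defects of the two-block finite-section product across the cut.

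\emph{Step 3: identify the recentering.} By \eqref{eq:partial-product-shift}, the recentered left block is
\[
 \prod_{\ell\le j}\widetilde a_\ell=z^{\eta_j}\alpha_j .
\]
Since the full product is unchanged, $\sum_\ell\delta_\ell=0$, and so the right block is $\prod_{\ell>j}\widetilde a_\ell=z^{-\eta_j}\beta_j$. This is exactly a two-factor recentering with $\delta=\eta_j$.

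\emph{Step 4: conclude.} Proposition~\ref{prop:rank-ladder} now gives the ranks $h_{L,j}-\eta_j$ and $h_{R,j}+\eta_j$, which is \eqref{eq:cut-rank-shift}. The recentered blocks are genuine finite bands only if their shifted semibandwidths $R_L-\eta_j$, $S_L+\eta_j$, $R_R+\eta_j$, $S_R-\eta_j$ are all nonnegative. Equivalently, the ranks in \eqref{eq:cut-rank-shift} are nonnegative, which is the admissibility condition \eqref{eq:delta-range} for the grouped pair and yields \eqref{eq:eta-marginal}.

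Step 4 needs a check that the per-factor constraints \eqref{eq:defect-constraints} really imply this block-level nonnegativity. Summing the local constraints over $\ell\le j$, using \eqref{eq:delta-eta} with $\eta_0=0$, gives
\[
 -S_L\le\eta_j\le R_L .
\]
Summing over $\ell>j$, using $\eta_m=0$, gives
\[
 -R_R\le -\eta_j\le S_R,
 \qquad\text{that is,}\qquad
 -S_R\le\eta_j\le R_R .
\]
Combining the two,
\[
 -\min(S_L,R_R)\le\eta_j\le\min(R_L,S_R),
\]
which is precisely \eqref{eq:eta-marginal}.

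The main subtlety is therefore not computational. It is to fix the meaning of the corner defects at a cut as those of the grouped two-block product, rather than of the full $m$-fold product of sections. With that reading, the statement is essentially a corollary of Propositions~\ref{prop:finite-product} and~\ref{prop:rank-ladder}.
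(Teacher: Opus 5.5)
Your argument is the paper's proof: you group the factors on each side of the cut and use \eqref{eq:partial-product-shift} with $\sum_\ell\delta_\ell=0$ to get a two-factor recentering by $\delta=\eta_j$ of blocks with exact supports $[-R_L,S_L]$ and $[-R_R,S_R]$, then apply Proposition~\ref{prop:rank-ladder}; your explicit derivation of block admissibility from the local constraints \eqref{eq:defect-constraints} spells out a step the paper leaves implicit. One slip there: summing over $\ell>j$ gives $-S_R\le-\eta_j\le R_R$, i.e.\ $-R_R\le\eta_j\le S_R$ (you interchanged $R_R$ and $S_R$), and it is this corrected bound, not the one you wrote, that combines with $-S_L\le\eta_j\le R_L$ to give your stated conclusion $-\min(S_L,R_R)\le\eta_j\le\min(R_L,S_R)$.
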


\begin{proof}
Across the cut, the product of the first $j$ recentered factors is shifted by
$z^{\eta_j}$ and the product of the remaining factors by $z^{-\eta_j}$, by
\eqref{eq:partial-product-shift}.  Apply Proposition~\ref{prop:rank-ladder}
to these two partial products.
\end{proof}

The marginal intervals \eqref{eq:eta-marginal} alone do not describe the
polytope when $m>2$; the neighboring defect indices are coupled by the local
constraints \eqref{eq:defect-constraints}.  Thus, for more than two factors, the admissible defect indices are coupled by
neighboring inequalities rather than forming independent intervals.

\begin{example}[Three tridiagonal Toeplitz factors]\label{ex:three-tridiagonal}
Let $r_j=s_j=1$ for $j=1,2,3$.  Then $d_j=2$, $S=3$, and the unperturbed product
has generic minimal recurrence order
\[
 \binom63=20.
\]
There are seven multi-index sectors:
\begin{center}
\begin{tabular}{@{}ccc@{}}
\toprule
$\mathbf k$ & $(\eta_1,\eta_2)$ & $\deg\Psi_{\mathbf k}$\\
\midrule
$(0,1,2)$ & $(-1,-1)$ & $2$\\
$(0,2,1)$ & $(-1,0)$ & $2$\\
$(1,0,2)$ & $(0,-1)$ & $2$\\
$(1,1,1)$ & $(0,0)$ & $8$\\
$(1,2,0)$ & $(0,1)$ & $2$\\
$(2,0,1)$ & $(1,0)$ & $2$\\
$(2,1,0)$ & $(1,1)$ & $2$\\
\bottomrule
\end{tabular}
\end{center}
The sector dimensions sum to
\[
 8+6\cdot2=20.
\]
Thus the central zero-index sector accounts for only eight of the twenty
generic modes; the six nonzero-defect-index sectors supply the remaining twelve.
\end{example}

\section{A graded recurrence-profile algebra}\label{sec:algebra}

For a polynomial $q$ of degree $d$ with nonzero constant and leading
coefficients, set its graded recurrence profile
\begin{equation}\label{eq:profile}
 \cR(q)
 \defeq
 (\chi_{q,0},\chi_{q,1},\ldots,\chi_{q,d}).
\end{equation}
For two such profiles $F\defeq(F_0,\ldots,F_m)$ and $G\defeq(G_0,\ldots,G_n)$, set
\begin{equation}\label{eq:star}
 (F\star G)_k
 \defeq
 \prod_{i+j=k}(F_i\boxtimes G_j).
\end{equation}
For polynomial cores $q_1$ and $q_2$, Theorem~\ref{thm:graded-product} is
simply
\begin{equation}\label{eq:profile-hom}
 \cR(q_1q_2)=\cR(q_1)\star\cR(q_2).
\end{equation}
Since $\boxtimes$ is associative and commutative on root multisets, so is
$\star$.  The constant polynomial $1$ has profile $(t-1)$ and acts as the
identity.  Hence $q\mapsto\cR(q)$ is a homomorphism from multiplication of
polynomial cores to this graded recurrence-profile monoid (that is, an
associative multiplication with an identity element).

The construction interpolates between two familiar descriptions.  At one
extreme, if $Q$ is treated as a single irreducible block, the unperturbed Toeplitz
determinant recurrence is one compound object of degree $\binom DS$.  If $Q$
is split into several factors, Theorem~\ref{thm:multi-factor} resolves that
compound object into sectors indexed by defect indices.  If one passes all the way to a complete
linear factorization of $Q$, every $d_j=1$ and every admissible $k_j$ is
$0$ or $1$.  The sectors are then individual $S$-subsets of the roots, so the
factorized description reduces exactly to the individual Widom modes.

This hierarchy also gives a simple root-free recursive construction whenever a
factorization of the polynomial core is supplied.  If
\[
 Q=q_1q_2\cdots q_m,
\]
one may compute the smaller profiles $\cR(q_j)$ and combine them successively
using \eqref{eq:star}, without extracting roots or constructing the full
compound matrix at once.  Appendix~\ref{app:factorized-construction} develops
this observation for a supplied bounded-degree factorization and proves a
soft-linear arithmetic bound in the degree of the expanded output recurrence.

\section{Concluding remarks}\label{sec:conclusion}

Multiplication of banded Toeplitz data has two compatible finite-dimensional
descriptions.  Polynomial cores multiply by a graded convolution of
exterior-degree recurrence profiles, while finite Toeplitz sections multiply
up to two explicit corner defects.  Complementary exterior degrees are paired
by a scaled reciprocal duality, and opposite Laurent recenterings realize the
individual graded sectors by redistributing defect rank between the two
boundaries.

For one-sided products, defect order gives the standard filtration of
$\bigwedge^s(V_a\oplus V_b)$ by $V_a$-exterior degree, with generically sharp
cumulative recurrence polynomials.  For general two-sided factors, the two
corner corrections extend the same ladder from opposite ends: bidegree
$(k,\ell)$ contains exactly $W_{-\ell}\oplus\cdots\oplus W_k$ generically.
This interval description is generically sharp and is intrinsic rather than an
artifact of a particular cofactor expansion.  For several factors, the cumulative defect indices form a lattice
polytope whose integer points are the multi-index exterior sectors.

These results give an exact description of how finite-section boundary effects
enter determinant recurrences under structured Toeplitz multiplication.  They
also yield a root-free recursive construction of the canonical recurrence when
a useful factorization of the polynomial core is supplied.  Appendix~\ref{app:boundary-closure}
uses the companion paper's row-column state module and generic minimality to
eliminate the standard-sector, same-corner minors generated by finite-section
multiplication, while distinguishing them from cross-corner profile shifts and
from the exceptional one- and two-term corner closures.
Appendix~\ref{app:factorized-construction} proves an output-sensitive
field-operation bound for bounded-degree factorized inputs.  Natural extensions
include more general controlled finite corner perturbations, beyond the special
double-banded and moving-diagonal families for which periodicity and recurrence
phenomena have previously been studied
\cite{DuDaFonseca2022,DuDaFonsecaCorrection2024,DuFonsecaMiskolc2023,Shitov2021,Shitov2024}.
Those works concern imposed corner-supported structures, whereas the defects
considered here arise from finite-section multiplication and carry the sector
filtration described above.  Other directions include coefficient-height and
bit-complexity bounds, numerical questions, and further structural consequences
of the multi-factor defect polytope.

\section*{Declarations}
\noindent\textbf{Use of generative AI.}
During the development and preparation of this work, the authors used OpenAI's
ChatGPT for exploratory mathematical reasoning, including checks of algebraic
identities and proof strategies, as well as literature searches and assistance
with organization, drafting, and editing.  The authors independently reviewed
and verified the mathematical arguments, computations, citations, and final
text.  The authors assume responsibility for all content.

\printbibliography

\clearpage
\appendix
\clearpage
\refstepcounter{section}
\section*{Appendix \thesection: Standard-state elimination of multiplication-induced corner defects}
\label{app:boundary-closure}

The finite-section product formulas in the main text produce several kinds of
nonprincipal Toeplitz minors.  The distinction between them matters.  The
corner corrections $L_n$ and $R_n$ in Proposition~\ref{prop:finite-product}
are supported in the upper-left and lower-right corners, respectively.
Whenever determinant multilinearity selects entries from one of these
corrections, it deletes the same number of rows and columns at that same
boundary.  The cofactors in \eqref{eq:one-column-cofactor-expansion} have the
same property.  We call this \emph{same-corner balance}.  These are the
multiplication-induced minors that remain in the standard Laurent profile and
can be reduced to the standard row-column state module of the companion paper.

A bounded boundary geometry need not have this property.  A row deletion at
one end can be balanced by a column deletion at the other end; the resulting
minor then carries a nonzero profile displacement and can belong to a
recentered Toeplitz sequence instead.  Thus there are two logically distinct
steps in the elimination used here:
\begin{enumerate}
 \item same-corner balance and fixed bandwidth place the multiplication minor
       in the standard row-column state module;
 \item generic observability of that standard module expresses the resulting
       state through consecutive principal determinants.
\end{enumerate}
The second step is exactly the observability argument already used in the
companion recurrence paper.  The first is the multiplication-specific point
that we make explicit below.

Fix an exact banded symbol in the notation of the main text,
\[
 a(z)\defeq\sum_{\nu=-r_a}^{s_a}a_\nu z^\nu,
 \qquad a_{-r_a}a_{s_a}\ne0,
 \qquad D_n(a)\defeq\det\T_n(a),
\]
and put $I_n\defeq\{1,\ldots,n\}$.  For a fixed finite set
$S\subset\mathbb Z_{>0}$, write
\[
 n+1-S\defeq\{n+1-s:s\in S\}.
\]

\begin{definition}[Balanced same-corner boundary minor]\label{def:balanced-corner-minor}
A sequence $F_n$ is a balanced same-corner boundary-minor family for $a$ if,
for all sufficiently large $n$, it has the form
\begin{equation}\label{eq:balanced-corner-minor}
 F_n
 \defeq
 \det\mathcal T(a)
 \Bigl[
  I_n\setminus\bigl(R_L\cup(n+1-R_R)\bigr),\,
  I_n\setminus\bigl(C_L\cup(n+1-C_R)\bigr)
 \Bigr],
\end{equation}
where $R_L,C_L,R_R,C_R$ are fixed finite sets and
\begin{equation}\label{eq:balanced-corner-counts}
 |R_L|=|C_L|,
 \qquad
 |R_R|=|C_R|.
\end{equation}
We only use families for which the displayed minor is not identically zero.
The complementary minors created by selecting fixed numbers of entries from
the upper-left and lower-right corrections in
\eqref{eq:finite-product} satisfy \eqref{eq:balanced-corner-counts}
automatically.  The same is true for each cofactor in
\eqref{eq:one-column-cofactor-expansion}.
\end{definition}

\begin{lemma}[Standard-state membership]\label{lem:standard-state-membership}
Let $F_n$ be a balanced same-corner boundary-minor family for $a$.  Then there
exist an integer $\eta$ and a fixed row vector $\gamma^T$, independent of
$n$, such that
\begin{equation}\label{eq:defect-to-rowcolumn-state}
 F_n=\gamma^T\mathbf v_N^{\rm rc},
 \qquad N\defeq n+\eta,
\end{equation}
where $\mathbf v_N^{\rm rc}$ is the standard determinant row-column state
vector for $a$ in the companion recurrence paper
\cite{AlekseyevKhomovskyRecurrences2026}.
\end{lemma}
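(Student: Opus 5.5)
The plan is to peel off the fixed left-boundary geometry by Laplace expansion, leaving only Toeplitz blocks whose lower-right corner carries the balanced right deletion, and then to recognise what remains as a coordinate of the companion paper's row-column state vector $\mathbf v_N^{\rm rc}$. Two cases matter. If $R_L=C_L=\varnothing$, the minor is a leading section of $\mathcal T(a)$ with a balanced deletion pattern near the bottom-right corner. If $R_R=C_R=\varnothing$, it is the mirror image.

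The key structural fact is translation invariance. Because $\mathcal T(a)$ is bi-infinite Toeplitz, the minor depends only on the relative positions of its row and column index sets. The balance condition $|R_L|=|C_L|$ says the Laurent placement of the bulk is unchanged: after the left deletions, row $i$ and column $i$ of the surviving interval still differ by $0$. So no recentering $z^{\pm1}a$ enters, in contrast with the endpoint $M_{n,n}(a)=D_n(za)$ of Proposition~\ref{prop:cauchy-binet-chain}.

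The concrete steps run in the following order.

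First, set $w\defeq\max(R_L\cup C_L)+\max(r_a,s_a)$ and perform a generalized Laplace expansion of $F_n$ along the first $w$ surviving rows. Since the band has width $r_a+s_a$, only column subsets inside a window of size $w+O(1)$ can contribute. Each term is a fixed left minor times a complementary minor. The complementary minor has rows $\{w'+1,\ldots,n\}\setminus(n+1-R_R)$ and columns $\{w''+1,\ldots,n\}\setminus(n+1-C_R)$, with $w'-w''$ bounded and the excess left deletions lying in the window $\{w''+1,\ldots,w''+O(1)\}$.

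Second, bring every term to a common shifted size $N=n+\eta$, with $\eta$ the most negative offset, by translation invariance. Each term then becomes a minor of $\T_N(a)$ that deletes a bounded pattern near its top-left and the balanced pattern $(R_R,C_R)$ near its bottom-right.

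Third, invoke the definition of the standard row-column state in \cite{AlekseyevKhomovskyRecurrences2026}. Its coordinates are exactly the minors of the leading section obtained by deleting a bounded, row-column-balanced set of trailing rows and columns, indexed by pairs of subsets of the last $r_a$ rows and last $s_a$ columns. Deeper trailing deletions reduce to these after a further fixed Laplace expansion along the last $\max(R_R\cup C_R)$ rows, using the same window argument. Collecting the fixed scalar coefficients, which come from the left minors and Laplace signs and are all independent of $n$, gives the vector $\gamma^T$.

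The main obstacle is the first step: making sure the top-left residual patterns are genuinely absorbed, and are not themselves new state coordinates sitting at the wrong end. I would handle this by expanding along enough leading rows that the complementary minor starts with a full clean row block. Its top rows then coincide with those of a principal section of shifted size, and the leftover left pattern is entirely contained in the expanded fixed minor.

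One point needs care: the balance in the complementary term must come out as zero displacement at the right end. That is exactly where $|R_L|=|C_L|$ is used, because the total rows equal the total columns and the left part absorbs equal numbers, so the right part is balanced as well. If the companion state module is indexed by the first rows and columns instead, I would apply the same argument to the reversed (transposed-flip) matrix, which is again Toeplitz for $a$.
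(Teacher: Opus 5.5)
Your outline is the same as the paper's: consume the left window by Laplace expansion, use $|R_L|=|C_L|$ to rule out recentering, translate, and read off right-boundary states from the companion classification, with the mirror image for the one-sided cases. The problem is that the step you flag as the main obstacle is not closed by your fix.

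A generalized Laplace expansion along the first $w$ surviving rows makes the \emph{rows} of each complementary minor clean. Its columns, however, are the surviving columns minus $K$, where $K$ runs over all admissible $w$-subsets of the window. Whenever $K$ omits a column $c$ with $w'+1-r_a\le c\le w'$, the complementary minor has a column to the left of its first row and a compensating hole further right, i.e.\ a new top-left signature. Already for $D_n(a)$ expanded along row $1$ this produces $\det\mathcal T(a)[\{2,\dots,n\},\{1,\dots,n\}\setminus\{j\}]$ with $2\le j\le 1+s_a$. So the left pattern is not ``contained in the expanded fixed minor''; it is pushed one level deeper, and your Step~3 does not apply to those terms.

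Iterating does not help. The paper's proof, which asserts that repeatedly expanding exposed rows or columns consumes the left irregularity, has the same hole: no finite left-window Laplace tree can work in general. Such a tree would give an identity $F_n=\sum_t c_t\det\mathcal T(a)[\{t,\dots,n\}\setminus\{n\},\{t,\dots,n\}\setminus\{n-1\}]$ (for $R_R=\{1\}$, $C_R=\{2\}$) with polynomial coefficients, valid for every symbol supported in $[-2,2]$. Now take $a=z^{-2}+z^2$ with $R_L=\{1\}$, $C_L=\{2\}$, so that $\mathcal T(a)$ splits into odd and even parity blocks. Every term on the right vanishes, since deleting a row and a column of opposite parity leaves non-square parity blocks. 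Yet $F_n=-1$ for all even $n\ge4$: both parity blocks are unitriangular and the interleaving sign is $-1$. Expanding the exposed lines there just reproduces the same signature four indices further in.

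The missing ingredient is therefore linear-algebraic rather than combinatorial. In this example the conclusion of the lemma still holds, $F_n=-(D_n(a)+D_{n-2}(a))$ because $D_n(a)=1$ for $4\mid n$ and $0$ otherwise, but only by mixing right signatures.

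A route that works runs as follows. The row-column transfer acts through the right boundary, which a fixed left window does not see once $n$ is large. Hence the vector $\mathbf v^{P}_N$ of minors with left signature $P=(R_L,C_L)$ and the standard right signatures obeys the same recursion $\mathbf v^{P}_{N+1}=Q_a\mathbf v^{P}_N$ as $\mathbf v^{\rm rc}_N$. For generic coefficients, $Q_a$ has $N_a$ distinct eigenvalues. Generic minimality of $D_N(a)=e_0^T\mathbf v^{\rm rc}_N$ then forces $\mathbf v^{\rm rc}_{N_0}$ to have a nonzero component on every eigenvector, so it is a cyclic vector. Consequently $\mathbf v^{P}_{N_0}=p(Q_a)\mathbf v^{\rm rc}_{N_0}$ for a polynomial $p$ over the rational function field, and you can take $\gamma^T=\phi^Tp(Q_a)$, where $\phi^T$ expresses the right pattern $(R_R,C_R)$ in standard coordinates.

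This proves the lemma over the rational function field in the band coefficients, which is all Lemma~\ref{lem:fixed-boundary-compound} uses. Your persymmetry argument $\det\T_n(a)[X,Y]=\det\T_n(a)[n+1-Y,n+1-X]$ still settles the one-sided cases at every specialization.
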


\begin{proof}
Choose a fixed left boundary window containing $R_L\cup C_L$.  Repeatedly
Laplace-expand an exposed row or column inside that window.  Because the
Toeplitz band has fixed width, every pivot sees only a fixed neighboring
window, so the resulting expansion tree is finite and independent of $n$.
All coefficients in that finite expansion depend only on the fixed boundary
pattern and the band coefficients.

The equality $|R_L|=|C_L|$ is what prevents a Laurent recentering during this
elimination.  After the left irregularity has been consumed, the retained row
and column indices enter the Toeplitz bulk with the same offset.  Hence every
nonzero terminal term is, after a fixed translation and a fixed shift of the
size parameter, a clean leading Toeplitz section with only a normalized
right-boundary signature.  Fixed bandwidth bounds the row and column deficits
of that signature by the lower and upper semibandwidths.  The exact signature
classification in the companion paper identifies precisely these normalized
pairs with the standard row-column states.  The fixed right-boundary pattern
$R_R,C_R$ only determines which finite set of those coordinates occurs.
Collecting the finitely many terminal terms gives
\eqref{eq:defect-to-rowcolumn-state}.
\end{proof}

\begin{remark}[Why bounded boundary geometry is not enough]
\label{rem:cross-corner-counterexample}
The same-corner balance in Definition~\ref{def:balanced-corner-minor} cannot
be dropped.  Consider the generic tridiagonal Toeplitz matrix
\[
 T_n\defeq\operatorname{tridiag}(\ell,d,u),
 \qquad D_n\defeq\det T_n.
\]
Then
\[
 D_{n+2}-dD_{n+1}+\ell uD_n=0.
\]
The cross-corner minor
\[
 F_n\defeq\det T_n[2:n,1:n-1]=\ell^{\,n-1}
\]
deletes one row at the left boundary and one column at the right boundary.
Accordingly,
\[
 F_{n+2}-dF_{n+1}+\ell uF_n
 =\ell^n(\ell-d+u),
\]
which is generically nonzero.  In the Laurent notation of this paper the
reason is transparent:
\[
 F_n=D_{n-1}(za).
\]
Thus the cross-corner displacement has changed the Laurent profile instead of
producing a state in the standard module for $a$.  The endpoint
$M_{n,n}(a)=D_n(za)$ in \eqref{eq:cauchy-binet-endpoints} is the same
recentring mechanism in the one-diagonal calculation.
\end{remark}

\begin{theorem}[Generic elimination of standard-sector corner minors]
\label{thm:generic-corner-minor-elimination}
Let $F_n$ be a balanced same-corner boundary-minor family for $a$, and put
\begin{equation}\label{eq:rowcolumn-dimension}
 N_a\defeq\binom{r_a+s_a}{s_a}.
\end{equation}
For generic coefficients of $a$, there are an integer $\eta$ and scalars
$\lambda_0,\ldots,\lambda_{N_a-1}$, independent of $n$, such that
\begin{equation}\label{eq:generic-corner-minor-elimination}
 F_n
 =\sum_{j=0}^{N_a-1}\lambda_j D_{n+\eta+j}(a)
\end{equation}
for all sufficiently large $n$.

More precisely, let $Q_a$ denote the determinant row-column transfer for the
symbol $a$ in the standard normalized state basis of the companion recurrence
paper; thus $Q_a$ is the transfer denoted $Q_{\rm rc}$ there, written in the
present paper's symbol and semibandwidth notation.  Let $e_0$ select its
principal state, and let $\mathbf v_N^{\rm rc}$ be the corresponding state
vector.  With $\eta$ and $\gamma^T$ from
Lemma~\ref{lem:standard-state-membership}, define
\begin{equation}\label{eq:observability-matrix-a}
 \mathcal O_a\defeq
 \begin{bmatrix}
  e_0^T\\
  e_0^TQ_a\\
  \vdots\\
  e_0^TQ_a^{N_a-1}
 \end{bmatrix}.
\end{equation}
The matrix $\mathcal O_a$ exists at every coefficient specialization and
records the principal determinant coordinate after
$0,1,\ldots,N_a-1$ transfer steps.  Full observability means that these
observations determine the entire standard row-column state, equivalently
that $\operatorname{rank}\mathcal O_a=N_a$.  Full observability, not existence
of the matrix, is the generic assertion.

On the generic locus, $\mathcal O_a$ is nonsingular and
\begin{equation}\label{eq:defect-observability-elimination}
 F_n
 =\gamma^T\mathcal O_a^{-1}
 \begin{bmatrix}
  D_N(a)\\
  D_{N+1}(a)\\
  \vdots\\
  D_{N+N_a-1}(a)
 \end{bmatrix},
 \qquad N\defeq n+\eta.
\end{equation}
\end{theorem}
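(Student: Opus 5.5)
The proof has three steps: reduce $F_n$ to a linear functional of the standard row-column state, show that principal determinants observe that state, and invert the observability matrix. Lemma~\ref{lem:standard-state-membership} gives the reduction: there are $\eta$ and $\gamma^T$ with $F_n=\gamma^T\mathbf v_N^{\rm rc}$, $N=n+\eta$, for all large $n$. The companion paper's row-column transfer then advances the state by $\mathbf v_{N+1}^{\rm rc}=Q_a\mathbf v_N^{\rm rc}$. Since $e_0$ selects the principal coordinate, $e_0^T\mathbf v_N^{\rm rc}=D_N(a)$. Iterating gives
\[
 e_0^TQ_a^j\mathbf v_N^{\rm rc}=D_{N+j}(a),\qquad 0\le j\le N_a-1,
\]
so $\mathcal O_a\mathbf v_N^{\rm rc}$ is exactly the column vector of the $N_a$ consecutive determinants $D_N(a),\ldots,D_{N+N_a-1}(a)$.

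Next I show that $\mathcal O_a$ is nonsingular. Its entries are polynomial (or rational, in the normalized basis) in the band coefficients, so $\det\mathcal O_a$ is a single rational function. It therefore suffices to show that $\det\mathcal O_a$ is not identically zero, and the generic locus is then the Zariski-open set where it is defined and nonzero. For this I argue by contradiction with generic minimality, Proposition~\ref{prop:background-recurrence}.

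Suppose $\operatorname{rank}\mathcal O_a<N_a$ identically. The observable subspace $\sum_j K\,Q_a^{T\,j}e_0$ is then a proper $Q_a^T$-invariant subspace of dimension $m<N_a$. The scalar sequence $D_N(a)=e_0^TQ_a^{N-N_0}\mathbf v^{\rm rc}_{N_0}$ would then satisfy a recurrence whose polynomial is the minimal polynomial of $Q_a$ restricted to that subspace, of degree at most $m<N_a$. This contradicts the fact that the minimal recurrence order of $D_n(a)$ is generically $\binom{r_a+s_a}{s_a}=N_a$. Here I use that the row-column state has dimension exactly $N_a$, as in the companion paper, so the contradiction is genuine.

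Once $\mathcal O_a$ is invertible, I write $\mathbf v_N^{\rm rc}=\mathcal O_a^{-1}(D_N(a),\ldots,D_{N+N_a-1}(a))^T$ and substitute into $F_n=\gamma^T\mathbf v_N^{\rm rc}$. This gives \eqref{eq:defect-observability-elimination}. Setting $(\lambda_0,\ldots,\lambda_{N_a-1})\defeq\gamma^T\mathcal O_a^{-1}$, which is independent of $n$, yields \eqref{eq:generic-corner-minor-elimination}.

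The main obstacle is the second step: making sure that the principal determinant observes the full state, rather than merely that some recurrence of order $N_a$ exists. The argument above uses only that the transfer acts on an $N_a$-dimensional space and that $D$ has minimal order $N_a$ there. The delicate point is that the transfer relation $\mathbf v_{N+1}=Q_a\mathbf v_N$ and the identification of coordinate $e_0$ with $D_N$ must hold exactly in the companion paper's normalization. If that normalization introduces scalar factors such as $(-1)^{s_a}a_{s_a}$, I would absorb them into $Q_a$ and leave the argument otherwise unchanged.
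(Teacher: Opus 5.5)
Your proposal is correct and follows the paper's proof essentially step for step. You use standard-state membership from Lemma~\ref{lem:standard-state-membership}, the identity $\mathcal O_a\mathbf v_N^{\rm rc}=(D_N(a),\ldots,D_{N+N_a-1}(a))^T$, generic nonsingularity of $\mathcal O_a$ by contradiction with the generic minimal order $N_a$, and then inversion. Your Zariski-open formulation via $\det\mathcal O_a\not\equiv0$ and your cyclic-subspace annihilator argument simply make explicit what the paper phrases as the principal coordinate factoring through a smaller observable quotient.
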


\begin{proof}
Lemma~\ref{lem:standard-state-membership} supplies the only
multiplication-specific step: $F_n=\gamma^T\mathbf v_N^{\rm rc}$ in the
standard state module.  The remainder is the observability argument from the
older boundary-closure formulation and from the companion recurrence paper.
The companion paper classifies the standard row-column states exactly, so
$Q_a$ has dimension $N_a$, and proves generic minimal recurrence order $N_a$
for the principal determinant sequence.  Moreover,
\[
 D_{N+j}(a)=e_0^TQ_a^j\mathbf v_N^{\rm rc}
 \qquad(j\ge0),
\]
so
\begin{equation}\label{eq:clean-shift-observability}
 \begin{bmatrix}
  D_N(a)\\
  D_{N+1}(a)\\
  \vdots\\
  D_{N+N_a-1}(a)
 \end{bmatrix}
 =\mathcal O_a\mathbf v_N^{\rm rc}.
\end{equation}
If $\mathcal O_a$ had rank strictly smaller than $N_a$, the principal
coordinate would factor through an observable quotient of dimension smaller
than $N_a$ and therefore could not have generic minimal recurrence order
$N_a$.  Thus $\mathcal O_a$ is generically nonsingular.  Solving
\eqref{eq:clean-shift-observability} for $\mathbf v_N^{\rm rc}$ and using
\eqref{eq:defect-to-rowcolumn-state} gives
\eqref{eq:defect-observability-elimination}, hence
\eqref{eq:generic-corner-minor-elimination}.
\end{proof}

At special coefficient values the standard-state membership from
Lemma~\ref{lem:standard-state-membership} remains valid, but the observability
step need not.  If $\mathcal O_a$ loses rank, some directions in the standard
row-column state space become invisible from the principal determinant
coordinate.  A particular same-corner minor may still admit a short reduction
after specialization, but that is an additional identity rather than a
consequence of generic observability.

The generic $N_a$-state elimination can collapse dramatically for special
boundary geometries that triangularize before the observability step.  The
following identities are the forms most relevant to the corner expansions in
the main text.

\begin{proposition}[Exceptional one- and two-term corner closures]
\label{prop:exceptional-corner-closures}
For $n,h\ge1$ define
\begin{align}
 \Gamma^+_{n,h}(a)
 &\defeq
 \det\mathcal T(a)
 \bigl[I_{n+h},\,
 I_n\cup\{n+s_a+1,\ldots,n+s_a+h\}\bigr],
 \label{eq:terminal-plus-a}\\
 \Gamma^-_{n,h}(a)
 &\defeq
 \det\mathcal T(a)
 \bigl[I_n\cup\{n+r_a+1,\ldots,n+r_a+h\},\,
 I_{n+h}\bigr].
 \label{eq:terminal-minus-a}
\end{align}
Then
\begin{equation}\label{eq:terminal-chain-a}
 \Gamma^+_{n,h}(a)=a_{s_a}^{\,h}D_n(a),
 \qquad
 \Gamma^-_{n,h}(a)=a_{-r_a}^{\,h}D_n(a).
\end{equation}
If $r_a=s_a$, the aligned boundary minor
\begin{equation}\label{eq:aligned-minor-a}
 \Gamma_n^{\rm al}(a)
 \defeq
 \det\mathcal T(a)
 \bigl[I_n\cup\{n+r_a\},\,I_n\cup\{n+r_a\}\bigr]
\end{equation}
satisfies
\begin{equation}\label{eq:aligned-two-term-a}
 \Gamma_n^{\rm al}(a)
 =a_0D_n(a)-a_{-r_a}a_{s_a}D_{n-1}(a),
 \qquad n\ge1.
\end{equation}
\end{proposition}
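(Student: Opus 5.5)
The plan is to compute all three minors directly, by choosing the row and column orderings so that each matrix becomes block triangular or is a single bordering of $\T_n(a)$. The only input is that the entry in position $(i,j)$ is $a_{j-i}$, which vanishes outside $-r_a\le j-i\le s_a$.

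For $\Gamma^+_{n,h}(a)$, keep the rows in the order $1,\ldots,n+h$ and the columns in the order $1,\ldots,n,n+s_a+1,\ldots,n+s_a+h$. The resulting matrix has the block form $\begin{bmatrix}\T_n(a)&U\\ \ast&M\end{bmatrix}$.
\begin{itemize}
 \item In the upper-right block $U$, a row $i\le n$ and a column $n+s_a+k$ give $j-i\ge s_a+k\ge s_a+1$, so $U=0$.
 \item In the lower-right $h\times h$ block $M$, row $n+i$ and column $n+s_a+k$ give the entry $a_{s_a+k-i}$. This vanishes for $k>i$ and equals $a_{s_a}$ for $k=i$, so $M$ is lower triangular with diagonal $a_{s_a}$.
\end{itemize}
The determinant is therefore block lower triangular and equals $a_{s_a}^hD_n(a)$. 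For $\Gamma^-_{n,h}(a)$ I would run the transposed argument. Now the extra rows $n+r_a+k$ meet the columns $j\le n$ in entries $a_{j-n-r_a-k}$ with index below $-r_a$, so they vanish. The remaining $h\times h$ block has entries $a_{i-r_a-k}$ and is triangular with diagonal $a_{-r_a}$.

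For the aligned minor with $r_a=s_a=:r$, order both index sets as $1,\ldots,n,n+r$. This gives a bordered matrix $\begin{bmatrix}\T_n(a)&u\\ v^T&a_0\end{bmatrix}$, where the border entries are as follows.
\begin{itemize}
 \item The extra column has entries $u_i=a_{n+r-i}$. Since $n+r-i\le s_a=r$ forces $i\ge n$, the only nonzero entry is $u_n=a_{s_a}$, so $u=a_{s_a}e_n$.
 \item The extra row has entries $v_j=a_{j-n-r}$. Since $j-n-r\ge -r$ forces $j\ge n$, the only nonzero entry is $v_n=a_{-r_a}$, so $v=a_{-r_a}e_n$.
\end{itemize}
The bordered-determinant formula $\det=a_0\det T-v^T\operatorname{adj}(T)u$ then gives $a_0D_n(a)-a_{-r_a}a_{s_a}\operatorname{adj}(\T_n(a))_{nn}$. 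The $(n,n)$ cofactor is the leading section $D_{n-1}(a)$, which yields \eqref{eq:aligned-two-term-a}. This holds for every $n\ge1$ with the convention $D_0=1$. Equivalently, one can Laplace-expand along the last row, which has only the two entries $a_{-r_a}$ and $a_0$.

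The step that needs the most care is the sign bookkeeping: keeping the row and column orderings consistent, so that no permutation sign is introduced, and checking that the vanishing ranges are exact at the edges. For example, for the aligned minor one must verify that $u$ has no further nonzero entry at $i=n-1$. I expect no deeper obstacle, since all three identities are finite triangular or bordered computations.
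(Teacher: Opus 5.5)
Your proposal is correct and is essentially the paper's own proof. The paper uses the same row and column orderings, so the $\Gamma^{\pm}$ matrices become block triangular with $\T_n(a)$ and an $h\times h$ triangular block of diagonal $a_{s_a}$ (resp.\ $a_{-r_a}$); for the aligned minor it uses the same bordered matrix $\begin{bmatrix}\T_n(a)&a_{s_a}e_n\\ a_{-r_a}e_n^T&a_0\end{bmatrix}$ expanded along its last row and column.
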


\begin{proof}
For \eqref{eq:terminal-plus-a}, order the rows as $I_n$ followed by
$n+1,\ldots,n+h$ and the columns as $I_n$ followed by
$n+s_a+1,\ldots,n+s_a+h$.  The resulting matrix has block form
\[
 \begin{bmatrix}
  \T_n(a)&0\\
  *&U_h
 \end{bmatrix},
 \qquad
 (U_h)_{ij}\defeq a_{s_a+j-i}.
\]
Entries of $U_h$ above the diagonal vanish because their indices exceed
$s_a$, while every diagonal entry equals $a_{s_a}$.  Thus
$\det U_h=a_{s_a}^h$, giving the first identity in
\eqref{eq:terminal-chain-a}; the second is its transpose-dual counterpart.

When $r_a=s_a$, the matrix in \eqref{eq:aligned-minor-a} has block form
\[
 \begin{bmatrix}
  \T_n(a)&a_{s_a}e_n\\
  a_{-r_a}e_n^T&a_0
 \end{bmatrix}.
\]
Expanding in the last row and column leaves either the full principal section
$\T_n(a)$ or its leading $(n-1)\times(n-1)$ section, which gives
\eqref{eq:aligned-two-term-a}.
\end{proof}

Proposition~\ref{prop:exceptional-corner-closures} describes cases in which
the defect geometry disappears before the full state-space elimination is
needed.  The terminal chains model a corner expansion that follows the
outermost diagonal and returns directly to a principal Toeplitz section; the
balanced aligned geometry leaves only two principal sections.  The
pentadiagonal multiplication identity \eqref{eq:scalar-filter} is simpler
still: because the two rank-one corrections in
\eqref{eq:tri-product-corners} sit exactly at $(1,1)$ and $(n,n)$, selecting
one correction leaves $D_{n-1}$ and selecting both leaves $D_{n-2}$.  Thus
\eqref{eq:scalar-filter} is a concrete multiplication-induced short closure.
The deeper cofactors in \eqref{eq:one-column-cofactor-expansion} are balanced
same-corner minors and fall under
Theorem~\ref{thm:generic-corner-minor-elimination}; the recentered endpoint
$M_{n,n}(a)=D_n(za)$ of Proposition~\ref{prop:cauchy-binet-chain} does not.

The companion recurrence paper goes further and classifies the standard states
that close under its prescribed local row-column Laplace rule.  In the
orientation $r_a\ge s_a$, that classification shows that the terminal-chain
states above, together with the aligned state in the balanced case, exhaust
the nonprincipal local closures for that rule, up to the stated transpose-dual
tie convention \cite{AlekseyevKhomovskyRecurrences2026}.  We do not reproduce
that state-graph theorem here: its natural role is in the construction and
classification of the row-column module $Q_a$.  For the present paper its
consequence is only that the one- and two-term formulas above are exceptional
standard-sector geometries; a general same-corner multiplication minor is
handled by the generic observable-state elimination, whereas a cross-corner
imbalance belongs to a different Laurent profile.

\clearpage
\refstepcounter{section}
\section*{Appendix \thesection: Output-sensitive construction for factorized polynomial cores}
\label{app:factorized-construction}

Section~\ref{sec:algebra} shows that a supplied factorization
$Q=q_1\cdots q_m$ can be processed recursively through the graded product
\eqref{eq:star}.  This appendix makes that observation quantitative for a
specific input model: the factorization is regarded as part of the input, the
factor degrees are bounded, and the output is the expanded scalar canonical
annihilator.  The construction uses only the graded multiplication law of
Theorem~\ref{thm:graded-product} and complementary-sector duality from
Theorem~\ref{thm:sector-duality}; it does not require root extraction or the
construction of the full compound matrix.

This input model differs from the row-column method of the companion paper
\cite{AlekseyevKhomovskyRecurrences2026}, which starts from an arbitrary
$(m_1,m_2)$-banded Toeplitz symbol and constructs an exact
$\binom{m_1+m_2}{m_1}$-state sparse transfer from boundary minors.  The two
constructions are therefore complementary, and no claim of universal practical
superiority is intended.

Within the factorized-input model, \emph{output-sensitive} means that the cost
is compared with the number of coefficients in the recurrence polynomial that
must ultimately be produced.  We count arithmetic operations in the
coefficient field and use soft-$O$ notation $\widetilde O(\cdot)$ to suppress
polylogarithmic factors.  The cost of finding the factorization, coefficient
heights, and bit complexity are not included in the main bound; the final
remark discusses these limitations.

Let
\begin{equation}\label{eq:algorithm-factorization}
 Q\defeq q_1q_2\cdots q_m,
 \qquad
 d_j\defeq\deg q_j,
 \qquad
 D\defeq\sum_{j=1}^m d_j,
\end{equation}
and fix a target exterior degree $S$.  Write
\begin{equation}\label{eq:N-output}
 N\defeq\binom DS=\deg\chi_{Q,S}.
\end{equation}
A \emph{factor tree} $\mathcal T$ is a rooted binary tree in which each node
$v$ represents the product $q_v$ of the factors below it, with
$d_v\defeq\deg q_v$.  At an internal node $v$ with children $u,w$ we therefore
have $q_v=q_uq_w$ and $d_v=d_u+d_w$.

\subsection{Pruning to the target exterior degree}

A degree $k$ at a node of degree $d$ can contribute to exterior degree $S$
at the root if and only if the complementary factors can supply the remaining
$S-k$ roots.  Thus the retained set is
\begin{equation}\label{eq:Iv}
 I_v
 \defeq\left\{
 k\in\mathbb Z:
 \max(0,S-D+d_v)\le k\le\min(d_v,S)
 \right\}.
\end{equation}
Since $\deg\chi_{q_v,k}=\binom{d_v}{k}$, set
\begin{equation}\label{eq:Wv}
 W_v
 \defeq
 \sum_{k\in I_v}\binom{d_v}{k}.
\end{equation}
We call $W_v$ the \emph{profile volume} at $v$: up to one constant term per
polynomial, it is the total output size of the retained recurrence
polynomials at that node.  At the root, $I_{\rm root}=\{S\}$ and hence
$W_{\rm root}=N$.

\begin{proposition}[Pruned factor-tree construction]\label{prop:pruned-tree}
Suppose the required profiles at the leaves of a factor tree are available.
For each internal node $v$ with children $u,w$, compute only the polynomials
$\chi_{q_v,k}$ with $k\in I_v$, using
\begin{equation}\label{eq:pruned-star}
 \chi_{q_v,k}
 =
 \prod_{\substack{i+j=k\\i\in I_u,\ j\in I_w}}
 \left(\chi_{q_u,i}\boxtimes\chi_{q_w,j}\right).
\end{equation}
Then the polynomial produced at the root is $\chi_{Q,S}$.
\end{proposition}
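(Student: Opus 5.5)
The plan is an induction on the factor tree, proving a statement at every node $v$ that is stronger than the root claim: for every $k\in I_v$, the polynomial computed at $v$ equals $\chi_{q_v,k}$. At a leaf this holds because the required profiles are assumed to be available. At the root, $I_{\rm root}=\{S\}$, so the node statement gives exactly $\chi_{Q,S}$.

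For the inductive step, take an internal node $v$ with children $u,w$, so $q_v=q_uq_w$, and fix $k\in I_v$. By Theorem~\ref{thm:graded-product}, $\chi_{q_v,k}$ is the product of $\chi_{q_u,i}\boxtimes\chi_{q_w,k-i}$ over all $i$ with $0\le i\le d_u$ and $0\le k-i\le d_w$. By the induction hypothesis, the children have correctly computed these factors whenever $i\in I_u$ and $k-i\in I_w$. So the whole step reduces to an index-set claim: if $k\in I_v$, $0\le i\le d_u$ and $0\le j\defeq k-i\le d_w$, then automatically $i\in I_u$ and $j\in I_w$. Once this is shown, the pruned product \eqref{eq:pruned-star} runs over exactly the same index range as \eqref{eq:graded-product}, and the step closes.

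The index check is the only real content. Expect it to be routine, but it is where care is needed. For the upper bound on $i$: $i\le d_u$ by assumption, and $i\le k\le\min(d_v,S)\le S$, so $i\le\min(d_u,S)$. For the lower bound on $i$: since $j\le d_w$, we have $i=k-j\ge k-d_w$. From $k\ge S-D+d_v$ and $d_v=d_u+d_w$ this gives $i\ge S-D+d_u$, and together with $i\ge0$ we get $i\ge\max(0,S-D+d_u)$. The same argument with the roles of $u$ and $w$ exchanged gives $j\in I_w$.

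Conceptually, $I_v$ is the set of degrees $k$ such that the $S-k$ remaining roots can be supplied by the complementary factors, whose total degree is $D-d_v$. The check above just says this feasibility condition is inherited by children when the sibling supplies the rest. Any pair $(i,j)$ that is excluded by pruning contributes only to nodes that never influence the root, so it never needs to be computed.
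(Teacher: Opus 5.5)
Your proposal is correct and is essentially the paper's proof: the graded multiplication law at each internal node, plus the observation that any child degree pairing with a retained parent degree $k\in I_v$ is itself retained, iterated from the leaves to the root. Your explicit inequalities $\max(0,S-D+d_u)\le i\le\min(d_u,S)$ simply make precise the paper's informal ``extendability'' argument.
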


\begin{proof}
Theorem~\ref{thm:graded-product} gives \eqref{eq:pruned-star} before pruning.
It remains only to observe that no omitted child degree can contribute to a
retained parent degree.  Indeed, if $i+j=k$ with $0\le j\le d_w$ and the
parent degree $k$ extends to total degree $S$, then $i$ extends to $S$ using
the degree available in $q_w$ together with the factors outside $v$.
Therefore $i\in I_u$; similarly $j\in I_w$.  Iterating from the leaves to the
root proves the claim.
\end{proof}

\paragraph{Using complementary-sector duality.}
Theorem~\ref{thm:sector-duality} allows a duality-aware implementation to
construct independently only one member of each pair $k,d_v-k$.  Define
\begin{equation}\label{eq:Jv}
 J_v\defeq
 \left\{\min(k,d_v-k):k\in I_v\right\}
\end{equation}
and
\begin{equation}\label{eq:Wv-half}
 \widehat W_v\defeq
 \sum_{j\in J_v}\binom{d_v}{j}.
\end{equation}
The polynomials with indices outside $J_v$ are recovered from their partners
by the scaled reciprocal transformation~\eqref{eq:sector-duality}.  Such a
reconstruction is linear in the degree of the polynomial, so it does not
alter the soft-linear complexity bounds below.  It does, however, avoid an
independent composed-product construction for the complementary component and
reduces profile storage.  At the root, one may similarly replace a target
$S>D/2$ by $D-S$, compute $\chi_{Q,D-S}$, and recover $\chi_{Q,S}$ from
Theorem~\ref{thm:sector-duality}.

The quantity $W_v$ is the natural output-size measure at node $v$.  By
Vandermonde's identity,
\begin{equation}\label{eq:Wv-le-N}
 N
 =\sum_{k\in I_v}
   \binom{d_v}{k}\binom{D-d_v}{S-k}
 \ge W_v,
\end{equation}
since every complementary binomial coefficient occurring in the retained
range is at least one.

For later estimates, write
\begin{equation}\label{eq:WDS}
 W_{D,S}(d)
 \defeq
 \sum_{k=\max(0,S-D+d)}^{\min(d,S)}\binom dk.
\end{equation}
Thus $W_v=W_{D,S}(d_v)$.  The following symmetry will be useful.
\begin{lemma}[Complement symmetry]\label{lem:W-symmetry}
For $0\le d\le D$,
\begin{equation}\label{eq:W-symmetry}
 W_{D,S}(d)=W_{D,D-S}(d).
\end{equation}
\end{lemma}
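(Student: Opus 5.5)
The plan is a direct reindexing of the sum in \eqref{eq:WDS} by $k\mapsto d-k$, combined with the binomial symmetry $\binom dk=\binom d{d-k}$. No earlier result is needed beyond this elementary symmetry. It matches the complementary-sector duality of Theorem~\ref{thm:sector-duality}, where degree $k$ at a node pairs with degree $d-k$.

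First I will record the summation range for the target degree $D-S$. Replacing $S$ by $D-S$ in \eqref{eq:WDS} gives the lower limit $\max(0,(D-S)-D+d)=\max(0,d-S)$ and the upper limit $\min(d,D-S)$. So
\[
 W_{D,D-S}(d)=\sum_{k=\max(0,d-S)}^{\min(d,D-S)}\binom dk.
\]

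Next I substitute $k=d-k'$ and write $\binom dk=\binom d{k'}$. The condition $k\ge\max(0,d-S)$ becomes $k'\le\min(d,S)$. The condition $k\le\min(d,D-S)$ becomes $k'\ge\max(0,d-D+S)=\max(0,S-D+d)$. These are exactly the limits in \eqref{eq:WDS} for $W_{D,S}(d)$, which proves \eqref{eq:W-symmetry}.

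The only point needing care is empty ranges, and it causes no difficulty. For $0\le d\le D$ and $0\le S\le D$ the range is nonempty. For $S$ outside $[0,D]$, both sums are empty, because the reindexing is a bijection between the two integer intervals, so the identity holds trivially.

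A conceptual check is that $I_v$ for target $S$ is the image of $I_v$ for target $D-S$ under $k\mapsto d_v-k$. This is the same pairing used in the duality-aware construction.
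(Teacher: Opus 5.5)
Your proof is correct and follows the paper's argument: both use the bijection $k\mapsto d-k$ together with $\binom dk=\binom d{d-k}$. The paper states this bijection combinatorially (complementing a retained $k$-subset turns extendability to a global $S$-subset into extendability to a global $(D-S)$-subset), whereas you check the summation limits explicitly, which is slightly more complete.
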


\begin{proof}
Replace a retained $k$-subset inside the node by its complement in the
$d$-element node.  The new local degree is $d-k$, and extendability to a
global $S$-subset becomes extendability to a global $(D-S)$-subset.
Since $\binom dk=\binom d{d-k}$, the volumes agree.
\end{proof}

Set
\begin{equation}\label{eq:S-min}
 S_{\min}\defeq\min(S,D-S).
\end{equation}
By Lemma~\ref{lem:W-symmetry}, all volume estimates may be proved under
$S_{\min}\le D/2$.

\begin{lemma}[Local volume estimate]\label{lem:local-volume}
Assume $2\le S_{\min}\le D/2$ and $d\le D/2$.  Then
\begin{equation}\label{eq:local-volume}
 W_{D,S}(d)
 \le
 1+d+2N\left(\frac dD\right)^2.
\end{equation}
\end{lemma}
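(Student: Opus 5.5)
The plan is to first simplify the range of summation, then split off the two lowest terms, and finally bound the tail by a suitable subfamily of the Vandermonde expansion \eqref{eq:Wv-le-N} of $N$.

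\emph{Step 1: simplify the range.} Under the hypotheses $S=S_{\min}\le D/2$ and $d\le D/2$ we have $S-D+d\le 0$, so the lower limit in \eqref{eq:WDS} is $0$. Hence
\[
 W_{D,S}(d)=\sum_{k=0}^{\min(d,S)}\binom dk .
\]

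\emph{Step 2: split off $k=0,1$.} These two terms contribute exactly $1+d$, so it remains to prove
\[
 T\defeq\sum_{k=2}^{\min(d,S)}\binom dk\le 2N\left(\frac dD\right)^2 .
\]
We may assume $d\ge2$, since otherwise $T=0$.

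\emph{Step 3: compare with $N$ combinatorially.} Fix a $d$-element set $A\subseteq[D]$ and write
\[
 N=\sum_k\binom dk\binom{D-d}{S-k},
\]
as in \eqref{eq:Wv-le-N}, counting $S$-subsets $X\subseteq[D]$ by $|X\cap A|$. The natural comparison quantity is
\[
 N\cdot\frac{d(d-1)}{D(D-1)}\cdot\frac{S(S-1)}{d(d-1)}=\binom{D-2}{S-2}\binom d2,
\]
which is the number of pairs $(X,P)$ with $P$ a $2$-subset of $X\cap A$. I will instead look for a weighting that charges each $k$-subset $K\subseteq A$ with $k\ge2$ to the $S$-subsets $X$ satisfying $X\cap A\supseteq K$.

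A termwise inequality $\binom dk\le 2(d/D)^2\binom dk\binom{D-d}{S-k}$ fails at $k=S$, where the complementary binomial coefficient equals $1$. So the charging has to be done in aggregate. First I will bound $T$ by a geometric-type estimate using the ratio
\[
 \frac{\binom d{k+1}}{\binom dk}=\frac{d-k}{k+1}.
\]
Then I will treat two regimes separately.
\begin{itemize}
 \item If $d\ge 2S$, the terms $\binom dk$ increase on $k\le S$. In this case $T\le 2\binom d{S}$, or a comparable constant multiple, and this is matched against the single Vandermonde term $\binom d{S-2}\binom{D-d}{2}\ge \binom d{S-2}(D/2)^2/2$.
 \item If $d<2S$, then $T\le 2^d$, and I will compare this with $N\ge\binom{D}{S}\ge\binom{2d}{2}\cdots$, using $D\ge 2d$ and $S\ge2$.
\end{itemize}
In the first regime the needed inequality reduces to a ratio bound of the form
\[
 \binom dS\Big/\binom d{S-2}\le \frac{(D-d)(D-d-1)}{d^2}\cdot\frac{d^2}{D^2}\cdot(\text{const}),
\]
which should follow from $d\le D/2$.

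\emph{Main obstacle.} I expect the hardest part to be making Step 3 uniform, with the explicit constant $2$ and without a gap between the two regimes. My first attempt will be a single injective charging: map each $K\subseteq A$ with $|K|=k\ge2$ to the Vandermonde term with index $k-2$ or $k$, whichever has a complementary factor $\binom{D-d}{S-k}$ of size at least $(D/d)^2/2$, and check that each term of $N$ is charged at most twice. If that bookkeeping fails near $k=S$, I will fall back to the two-case ratio argument above.
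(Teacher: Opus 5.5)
\textbf{There is a genuine gap.} Step~3 carries all the content, but it is a list of candidate strategies rather than an argument, and the concrete ingredients you state fail.

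First, the single-term matching and your charging rule both break at the boundary. Take $S=2$ and $d=D/2\ge4$, which lies in your first regime. Then $T=\binom d2$ and $2(d/D)^2=\tfrac12$. The summand $\binom d0\binom{D-d}{2}=\binom d2$ and the summand $\binom d2\binom{D-d}{0}=\binom d2$ each cover only $\tfrac12T$ after this weighting. So neither matching $T$ against the single term $\binom d{S-2}\binom{D-d}{2}$, nor your ``whichever'' rule (which sends all $k$-subsets of a given size to one summand of index $k-2$ or $k$), can give the constant $2$.

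Second, your displayed ratio inequality is wrong as written. The $d^2$ factors cancel, so its right side is bounded by a constant. Its left side is $\binom dS/\binom d{S-2}=\frac{(d-S+2)(d-S+1)}{S(S-1)}$, which equals $\binom d2$ at $S=2$ and is unbounded.

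Third, the bound $T\le C\binom dS$ with an absolute $C$ fails near $d=2S$. There the consecutive ratios $\frac{d-k}{k+1}$ tend to $1$, and $T/\binom dS$ grows like $\sqrt S$.

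Fourth, in your second regime $T\le 2^d$ is too crude. For example, $D=6$, $d=3$, $S=2$ satisfies all hypotheses, yet $2^d=8>2N(d/D)^2=7.5$.

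The missing idea is to compare $\binom dk$ not with the $k$th Vandermonde summand but with $\binom Dk$. One has $\binom dk/\binom Dk=\prod_{j=0}^{k-1}\frac{d-j}{D-j}\le(d/D)^k$. Since $k\le S_{\min}\le D/2$, unimodality of $k\mapsto\binom Dk$ gives $\binom Dk\le\binom D{S_{\min}}=N$. With $x=d/D\le\tfrac12$ this yields $T\le N\sum_{k\ge2}x^k=Nx^2/(1-x)\le 2Nx^2$ immediately. This is exactly where the hypothesis $d\le D/2$ produces the constant $2$, and it is the paper's proof.

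Separately, Step~1 silently assumes $S=S_{\min}$. The hypothesis constrains only $S_{\min}$, and for $S>D/2$ the lower limit $S-D+d$ can be positive (e.g.\ $D=10$, $S=8$, $d=5$). You must first replace $S$ by $S_{\min}$ using Lemma~\ref{lem:W-symmetry}, noting that $N=\binom DS=\binom D{S_{\min}}$ is unchanged.
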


\begin{proof}
By complement symmetry we may replace $S$ by $S_{\min}$.  Since $d\le D/2$ and
$S_{\min}\le D/2$, the lower endpoint in \eqref{eq:Iv} is zero, and therefore
\[
 W_{D,S_{\min}}(d)=\sum_{k=0}^{\min(d,S_{\min})}\binom dk.
\]
For every $k\ge0$,
\begin{equation}\label{eq:binom-ratio}
 \frac{\binom dk}{\binom Dk}
 =\prod_{j=0}^{k-1}\frac{d-j}{D-j}
 \le\left(\frac dD\right)^k.
\end{equation}
Moreover $k\le S_{\min}\le D/2$ implies
$\binom Dk\le\binom D{S_{\min}}=N$.  With $x\defeq d/D\le1/2$, the terms $k\ge2$ therefore
satisfy
\[
 \sum_{k=2}^{\min(d,S_{\min})}\binom dk
 \le
 N\sum_{k=2}^{\infty}x^k
 =N\frac{x^2}{1-x}
 \le2Nx^2.
\]
Adding the terms $k=0,1$ proves \eqref{eq:local-volume}.
\end{proof}

\subsection{Balanced trees}

A factor tree is called $\theta$-balanced, where $1/2\le\theta<1$, when every internal
node of degree $d$ has children of degrees $d_1,d_2$ satisfying
\begin{equation}\label{eq:theta-balanced}
 d_1+d_2=d,
 \qquad
 \max(d_1,d_2)\le\theta d.
\end{equation}
The constant $\theta$ is independent of $D$.

\begin{theorem}[Linear total profile volume]\label{thm:balanced-volume}
Let $\mathcal T$ be a $\theta$-balanced factor tree of total degree $D$, and
let $1\le S\le D-1$.  If
\begin{equation}\label{eq:nonedge-condition}
 2\le\min(S,D-S),
\end{equation}
then
\begin{equation}\label{eq:balanced-volume}
 \sum_{v\in\mathcal T}W_v=O_\theta(N),
 \qquad N=\binom DS.
\end{equation}
\end{theorem}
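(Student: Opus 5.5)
By Lemma~\ref{lem:W-symmetry} I may replace $S$ by $S_{\min}$ throughout, so I assume $2\le S\le D/2$. I then split the nodes of $\mathcal T$ into \emph{small} nodes with $d_v\le D/2$ and \emph{large} nodes with $d_v>D/2$, and bound the two classes separately. For large nodes I use only the crude estimate $W_v\le N$ from \eqref{eq:Wv-le-N}. For small nodes I use Lemma~\ref{lem:local-volume}, which applies because $S_{\min}\ge2$ by \eqref{eq:nonedge-condition} and $d_v\le D/2$; it gives $W_v\le 1+d_v+2N(d_v/D)^2$.

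\textbf{Large nodes.} Along any root-to-leaf path the degrees decrease geometrically: a child of a node of degree $d$ has degree at most $\theta d$. Two distinct large nodes cannot be incomparable, since incomparable nodes have disjoint factor sets and so degrees summing to at most $D$. Hence the large nodes lie on a single chain from the root. Their degrees run from $D$ down to above $D/2$ with ratio at most $\theta$, so the chain has length at most $1+\log(2)/\log(1/\theta)=O_\theta(1)$. The large nodes therefore contribute $O_\theta(N)$.

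\textbf{Small nodes.} I group the small nodes by depth level. At a fixed depth the nodes have disjoint factor sets, so $\sum d_v\le D$ over that level. At depth $t$ each node has $d_v\le\theta^tD$. This gives
\[
 \sum_{\text{depth }t}\Bigl(\frac{d_v}{D}\Bigr)^2\le\theta^t\sum_{\text{depth }t}\frac{d_v}{D}\le\theta^t,
\]
and summing over $t$ yields $2N/(1-\theta)$.

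The terms $1+d_v$ need more care, because a tree with $m$ leaves has $2m-1$ nodes and $\sum_v d_v$ can be as large as $D\log D$. I claim these terms are still $O(N)$. There are $O(D)$ nodes in total, since every factor has positive degree, so $m\le D$. Also $\sum_v d_v\le D\cdot\text{depth}=O_\theta(D\log D)$, because each level contributes at most $D$ and there are $O_\theta(\log D)$ levels.

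It remains to check $D\log D=O(N)$. Since $S_{\min}\ge2$, we have $N\ge\binom D2=D(D-1)/2$. So $D\log D=O(N)$ holds for all $D\ge2$, absorbing constants. Strictly this only needs $N\gtrsim D\log D$, which is exactly why the hypothesis \eqref{eq:nonedge-condition} excludes $S\in\{1,D-1\}$, where $N=D$. Combining the three contributions gives \eqref{eq:balanced-volume}.

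The main obstacle is the depth bound. Leaves of degree $1$ can appear deep in the tree, so I bound depth by counting degree shrinkage: a node at depth $t$ has degree at least $1$ and at most $\theta^tD$, so $t\le\log D/\log(1/\theta)$. Balance is imposed only at internal nodes, but every internal node has degree at least $2$, so the bound still applies. If a sharper argument were wanted, one could instead bound $\sum_v(1+d_v)$ by $O(D\log D)$ directly using the same level argument, which is what the plan does.
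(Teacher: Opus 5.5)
Your proposal is correct and follows essentially the same route as the paper: reduce to $2\le S\le D/2$ by complement symmetry, charge each of the $O_\theta(1)$ nodes with $d_v>D/2$ (which lie on one chain from the root) at most $N$, apply Lemma~\ref{lem:local-volume} to the remaining nodes, and absorb the $O(D)+O_\theta(D\log D)$ terms using $N\ge\binom D2$. The only cosmetic difference is that you bound $\sum_v d_v^2$ level by level, using $\max d_v\cdot\sum d_v\le\theta^tD^2$ at depth $t$, whereas the paper uses the per-split contraction $d_1^2+d_2^2\le(\theta^2+(1-\theta)^2)d^2$; both give $O_\theta(D^2)$.
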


\begin{proof}
Write
\[
 \Lambda\defeq\theta^2+(1-\theta)^2<1.
\]
At every internal split,
\[
 d_1^2+d_2^2\le\Lambda d^2.
\]
Summing by generations gives
\begin{equation}\label{eq:squares-tree}
 \sum_{v\in\mathcal T}d_v^2=O_\theta(D^2).
\end{equation}
The height of the tree is $O_\theta(\log D)$, while the node degrees at each
level sum to at most $D$.  Hence
\begin{equation}\label{eq:degrees-tree}
 \sum_{v\in\mathcal T}d_v=O_\theta(D\log D).
\end{equation}
The number of nodes is $O(D)$ because every leaf has positive integer degree.

There are only $O_\theta(1)$ nodes with $d_v>D/2$: at most one can occur at
any depth, and repeated descent along such a branch decreases its degree by
at least the fixed factor $\theta$.  Each of these nodes contributes at most
$N$ by \eqref{eq:Wv-le-N}.

For all remaining nodes apply Lemma~\ref{lem:local-volume} and sum:
\[
 \sum_{d_v\le D/2}W_v
 \le
 O(D)+O_\theta(D\log D)
 +\frac{2N}{D^2}O_\theta(D^2).
\]
Under \eqref{eq:nonedge-condition}, complement symmetry allows us to assume
$2\le S\le D/2$, and then
\[
 N=\binom DS\ge\binom D2=\Theta(D^2).
\]
Thus $D\log D=O(N)$, proving \eqref{eq:balanced-volume}.
\end{proof}

A \emph{complete dyadic tree}, meaning a tree split exactly in half at every
level, gives a useful explicit constant.

\begin{corollary}[Complete dyadic tree]\label{cor:dyadic-volume}
Suppose $D=2^h$ and the degree tree is split exactly in half at every level,
down to degree one.  If $2\le S\le D-2$, then
\begin{equation}\label{eq:five-N}
 \sum_{v\in\mathcal T}W_v<5\binom DS.
\end{equation}
\end{corollary}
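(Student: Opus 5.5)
Since $W_{D,S}(d)=W_{D,D-S}(d)$ by Lemma~\ref{lem:W-symmetry}, I will first reduce to $2\le S\le D/2$ and write $N=\binom DS$. In the complete dyadic tree there are exactly $2^{i}$ nodes of degree $2^{h-i}$ at depth $i$, for $i=0,\ldots,h$. I will bound the levels in three groups: the root, the two children of degree $D/2$, and the deeper levels $i\ge2$. At the root, $W_{\rm root}=N$. The deeper levels will be controlled by Lemma~\ref{lem:local-volume}.

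For the two children of degree $D/2$, the lower endpoint in \eqref{eq:Iv} vanishes, so $W_v=\sum_{k=0}^{S}\binom{D/2}{k}$. I expect this level to be the main obstacle, because here Lemma~\ref{lem:local-volume} only gives $1+D/2+N/2$ per node, and two nodes then cost about $N$. That leaves too little room for the remaining levels. I will therefore sharpen this level directly with Vandermonde:
\[
 N=\sum_{k}\binom{D/2}{k}\binom{D/2}{S-k}\ge\binom{D/2}{S}+\binom{D/2}{0}\binom{D/2}{S}+\sum_{k=1}^{S-1}\binom{D/2}{k}\binom{D/2}{S-k}.
\]
Since $\binom{D/2}{S-k}\ge D/2\ge 2$ for $1\le k\le S-1$ whenever $S-k\le D/2-1$, this yields
\[
 \sum_{k=0}^{S}\binom{D/2}{k}\le \tfrac N2+O(1).
\]
Equivalently, the two middle nodes together contribute at most about $N+O(1)$. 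The exact constant needs care; a cleaner route is the bound $2\binom{D/2}{k}\le\binom Dk\,2^{-k+1}$, summed against $N$.

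For depth $i\ge2$, each node has $d=D2^{-i}\le D/4$. Lemma~\ref{lem:local-volume} then gives
\[
 W_v\le 1+d+2N4^{-i}.
\]
Summing over the $2^i$ nodes at depth $i$ gives at most $2^i+D+2N2^{-i}$. Summing over $i=2,\ldots,h$, the $N$-part is at most $2N\cdot\tfrac12=N$. The linear part is at most $2D+D\log_2 D$. This is at most $N$-small because $N\ge\binom D2$, and small $D$ (namely $D=4,8$) can be checked by hand.

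Adding the three groups gives $N+(N+O(1))+N+O(D\log D)$, so the total is about $3N$ plus lower-order terms. The comparison $D\log D\le c\binom D2$ then absorbs the lower-order terms with room to spare below $5N$. The remaining work is to make the constants explicit. The lower-order terms must fit in the slack of $2N$, which requires $2D+D\log_2D+O(2^{h})<2\binom D2$. This holds for $D\ge4$, and the cases $D=4$ (where $S=2$) and $D=8$ I will verify directly.
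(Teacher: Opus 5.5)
Your argument is correct, but you organize the tree by depth, whereas the paper organizes it by local exterior degree. The paper does not apply Lemma~\ref{lem:local-volume} level by level. At every non-root level $\ell$ it separates two kinds of terms. The terms $k=0,1$ total exactly $(2D-2)+hD\le\frac73\binom D2\le\frac73N$ over the whole tree. For $k\ge2$, the single ratio bound $2^\ell\binom{D/2^\ell}{k}\le2^{-\ell(k-1)}\binom Dk$ applies at every level rather than only at $\ell=1$; this is your ``cleaner route''. It turns the whole tree into a double geometric series bounded by $N\sum_{j\ge1}(2^j-1)^{-1}<\frac53N$, and adding the root gives $5N$. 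The paper's bookkeeping is uniform in $D\ge4$, since the $\frac73$ estimate is an equality at $D=4$. Your split instead gives the sharper asymptotic bound $3N+O(D\log D)$.

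Two corrections to your version. First, depth one is not an obstacle. Lemma~\ref{lem:local-volume} gives $1+D/2+N/2$ per node there, which has the same leading term $N/2$ as your (correct) Vandermonde bound $1+N/2$. So the lemma can be applied at every non-root level, and summing gives $\sum_vW_v\le3N-2N/D+(h+2)D-2$. Since $2N+2N/D\ge D^2-1$ and $h+2\le D$, this is below $5N$ for all $D\ge4$ with no case checks. Second, your closing claim that $2D+D\log_2D<2\binom D2$ for $D\ge4$ is false at $D=4$, where the two sides are $16$ and $12$. Your separate hand check of $D=4$ rescues the argument (the exact total there is $6+8+8=22<30$), but the exact count above makes that check unnecessary.
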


\begin{proof}
By complement symmetry assume $2\le S\le D/2$, where $N\defeq\binom DS$.
At level $\ell=1,\ldots,h$ there are $2^\ell$ nodes of degree
$d_\ell\defeq D/2^\ell$.  Separate the local degrees $k=0,1$ from $k\ge2$.
The first two contribute
\begin{equation}\label{eq:k01-volume}
 \sum_{\ell=1}^{h}2^\ell(1+d_\ell)
 =(2D-2)+hD
 \le\frac73\binom D2
 \le\frac73N.
\end{equation}
For $k\ge2$, \eqref{eq:binom-ratio} gives
\[
 2^\ell\binom{D/2^\ell}{k}
 \le
 2^{-\ell(k-1)}\binom Dk.
\]
Therefore
\begin{align*}
 \sum_{k=2}^{S}\sum_{\ell=1}^{h}
 2^\ell\binom{D/2^\ell}{k}
 &\le
 \sum_{k=2}^{S}
 \frac{\binom Dk}{2^{k-1}-1}\\
 &\le
 N\sum_{j=1}^{\infty}\frac1{2^j-1}
 <\frac53N.
\end{align*}
The last inequality follows from
$1/(2^1-1)=1$ and, for $j\ge2$,
$2^j-1\ge3\cdot2^{j-2}$.  Adding the root contribution $N$ yields
\eqref{eq:five-N}.
\end{proof}

\subsection{The two edge degrees}

The logarithmic overhead of the tree algorithm at $S=1$ and $S=D-1$ is
real.  In the complete dyadic tree,
\begin{equation}\label{eq:edge-tree-volume}
 \sum_{v\in\mathcal T}W_v
 =D(\log_2D+3)-2
 \qquad(S=1),
\end{equation}
and the same holds for $S=D-1$ by complement symmetry.  Nevertheless these
two degrees should not be computed by the tree algorithm: they have direct
linear formulas.

\begin{proposition}[Edge-degree shortcuts]\label{prop:edge-shortcuts}
Let
\[
 q(z)\defeq q_0+q_1z+\cdots+q_dz^d,
 \qquad q_0q_d\ne0,
\]
with the reciprocal polynomial $q^\vee(z)\defeq z^dq(1/z)$.  Then
\begin{align}
 \chi_{q,1}(t)
 &=(-1)^d q_d^{\,d-1}
   q\!\left(-\frac{t}{q_d}\right),
 \label{eq:k1-shortcut}\\
 \chi_{q,d-1}(t)
 &=(-1)^d q_0^{\,d-1}
   q^\vee\!\left(-\frac{t}{q_0}\right).
 \label{eq:kd1-shortcut}
\end{align}
Thus both polynomials can be constructed by coefficient rescaling in
$O(d)$ field operations.
\end{proposition}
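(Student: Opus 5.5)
The plan is to compute both polynomials directly from the definition \eqref{eq:chi-qk}, using the explicit root description and the factorization $q(z)=q_d\prod_{i=1}^d(z-\rho_i)$.

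For \eqref{eq:k1-shortcut}, take $k=1$. The modes are $-q_d\rho_i$ for $i=1,\ldots,d$, so
\[
 \chi_{q,1}(t)=\prod_{i=1}^d(t+q_d\rho_i)=q_d^{\,d}\prod_{i=1}^d\Bigl(\rho_i+\frac{t}{q_d}\Bigr).
\]
Substituting $z=-t/q_d$ into the factorization of $q$ gives
\[
 q(-t/q_d)=q_d\prod_i(-t/q_d-\rho_i)=(-1)^dq_d\prod_i(\rho_i+t/q_d).
\]
Hence $\prod_i(\rho_i+t/q_d)=(-1)^dq_d^{-1}q(-t/q_d)$, and multiplying by $q_d^{\,d}$ yields exactly $(-1)^dq_d^{\,d-1}q(-t/q_d)$. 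As a consistency check, the leading coefficient is $(-1)^dq_d^{d-1}\cdot q_d(-1/q_d)^d=1$, so the result is monic as required.

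For \eqref{eq:kd1-shortcut}, there are two routes. The simplest is to note that $q^\vee$ has leading coefficient $q_0$, constant coefficient $q_d$, and roots $1/\rho_i$. So \eqref{eq:k1-shortcut} applied to $q^\vee$ gives a polynomial with roots $-q_0/\rho_i$.

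It then remains to check that these coincide with the degree-$(d-1)$ modes of $q$. Take the mode indexed by the complement of $\{i\}$:
\[
 (-1)^{d-1}q_d\prod_{j\ne i}\rho_j=(-1)^{d-1}q_d\cdot\frac{(-1)^dq_0/q_d}{\rho_i}=-\frac{q_0}{\rho_i},
\]
using $\prod_j\rho_j=(-1)^dq_0/q_d$. Thus $\chi_{q,d-1}=\chi_{q^\vee,1}$, since both are monic with the same root multiset, and \eqref{eq:k1-shortcut} for $q^\vee$ gives \eqref{eq:kd1-shortcut}. Alternatively, one could invoke Theorem~\ref{thm:sector-duality} with $k=1$, but the direct reciprocal argument avoids the normalization constants $A_1,B_1$.

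The only point needing care is this sign and normalization bookkeeping. Repeated roots cause no problem because everything is an identity of root multisets, which persists under specialization. The complexity claim is immediate: the coefficient of $t^j$ in \eqref{eq:k1-shortcut} is $(-1)^{d+j}q_d^{\,d-1-j}q_j$. All such scalings are obtained with $O(d)$ multiplications by successive powers of $q_d$, and likewise of $q_0$ for the second formula.
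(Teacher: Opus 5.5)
Your proof is correct and follows essentially the same route as the paper: you identify the degree-one modes $-q_d\rho_i$ and factor $q(-t/q_d)$, then recognize the degree-$(d-1)$ modes $-q_0/\rho_i$ as the degree-one modes of $q^\vee$ and apply the first formula to $q^\vee$. You additionally spell out the sign and normalization bookkeeping and the explicit coefficient formula behind the $O(d)$ count, which the paper leaves implicit.
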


\begin{proof}
Writing $\rho_1,\ldots,\rho_d$ for the roots of $q$, the roots of
$\chi_{q,1}$ are $-q_d\rho_i$, and \eqref{eq:k1-shortcut} follows by
factoring $q(-t/q_d)$.  The roots of $\chi_{q,d-1}$ are
$-q_0/\rho_i$, which are the degree-one modes of the reciprocal polynomial
$q^\vee$; applying \eqref{eq:k1-shortcut} to $q^\vee$ gives
\eqref{eq:kd1-shortcut}.
\end{proof}

\subsection{Arithmetic complexity}

We now combine the volume estimate with fast composed products.  Let $\mathsf M(\ell)$ denote a polynomial-multiplication cost bound.  In
characteristic zero, the algorithm of Bostan, Flajolet, Salvy, and Schost
computes the composed product of monic polynomials of degrees $m,n$ in
$O(\mathsf M(mn))$ field operations
\cite{BostanFlajoletSalvySchost2006}.  At an internal node $v$ and fixed
retained degree $k$, the composed-product factors in
\eqref{eq:pruned-star} have degrees
\[
 \binom{d_u}{i}\binom{d_w}{k-i},
\]
whose sum is $\binom{d_v}{k}$ by Vandermonde.  Fast multiplication of these
factors therefore gives total cost
\begin{equation}\label{eq:node-arithmetic}
 \widetilde O\!\left(\binom{d_v}{k}\right)
\end{equation}
for that $k$, and hence $\widetilde O(W_v)$ for the node.

\begin{lemma}[Balanced trees from bounded-degree factors]\label{lem:bounded-degree-tree}
Suppose every given factor $q_j$ in \eqref{eq:algorithm-factorization} has
$1\le d_j\le d_0$, where $d_0$ is fixed.  The factors can be grouped into a
factor tree whose nonterminal splits are $2/3$-balanced and whose terminal
groups have total degree less than $3d_0$.
\end{lemma}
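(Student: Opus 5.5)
We build the tree top-down by a recursive greedy partition of the multiset of factor degrees. At a node holding a group $G$ of factors with total degree $d_G$, we stop and declare $G$ a terminal group if $d_G<3d_0$. Otherwise we split $G$ into two nonempty subgroups and recurse on each. Leaves of the factor tree are then the individual factors inside terminal groups, grouped in any way; only the nonterminal splits are required to be $2/3$-balanced.

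The key step is the splitting rule for a group with $d_G\ge 3d_0$. Order the factors of $G$ arbitrarily, $q_{j_1},\dots,q_{j_p}$, and take partial sums $\sigma_t\defeq d_{j_1}+\cdots+d_{j_t}$. Let $t$ be the least index with $\sigma_t\ge d_G/3$. Minimality and $d_{j_t}\le d_0$ give
\[
 d_G/3\le\sigma_t<d_G/3+d_0\le 2d_G/3,
\]
where the last inequality uses $d_0\le d_G/3$. The first $t$ factors form one child and the remaining factors form the other, so the children have degrees $\sigma_t$ and $d_G-\sigma_t$, both in $[d_G/3,2d_G/3]$. This is \eqref{eq:theta-balanced} with $\theta=2/3$. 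Both children are nonempty because each has degree at least $d_G/3>0$.

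For termination, each split strictly decreases the degree, by a factor at most $2/3$, so the recursion stops after $O(\log D)$ levels at groups of degree less than $3d_0$. Every group with degree at least $3d_0$ gets split, so every terminal group has total degree less than $3d_0$, as claimed. The one subtle point is to ensure the threshold $3d_0$ makes the window $[d_G/3,\,d_G/3+d_0)$ fit inside $[d_G/3,2d_G/3]$; this is exactly the inequality $d_0\le d_G/3$. I do not expect any real obstacle beyond this check.
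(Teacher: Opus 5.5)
Your proof is correct and follows the paper's own argument: greedily fill one child until its degree first reaches $d/3$, and use that the overshoot is at most $d_0\le d/3$, which puts both children in $[d/3,2d/3]$. Your extra checks (existence of $t$, both children nonempty, termination) are details the paper leaves implicit.
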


\begin{proof}
At a node of total degree $d\ge3d_0$, add factors to one child until its total
degree first reaches $d/3$.  The overshoot is at most $d_0\le d/3$, so that
child has degree at most $2d/3$; the complementary child also has degree
between $d/3$ and $2d/3$.  Repeat recursively.
\end{proof}

\begin{theorem}[Output-sensitive recurrence construction]\label{thm:output-sensitive}
Assume that the factorization \eqref{eq:algorithm-factorization} is given
over a characteristic-zero field and that the factor degrees are bounded by
a constant $d_0$.  For every $1\le S\le D-1$, the expanded canonical
annihilator $\chi_{Q,S}$ of degree $N\defeq\binom DS$ can be computed,
without extracting roots and without constructing an $N$-dimensional
auxiliary matrix, in
\begin{equation}\label{eq:output-sensitive}
 \widetilde O(N)
\end{equation}
field operations.
\end{theorem}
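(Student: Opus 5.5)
The plan is to separate the two edge degrees from the interior degrees, handling the edges by Proposition~\ref{prop:edge-shortcuts} and the interior by the pruned tree of Proposition~\ref{prop:pruned-tree}.

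\emph{Edge degrees.} If $S=1$ or $S=D-1$, then $N=D$. Expanding $Q=q_1\cdots q_m$ by a product tree costs $\widetilde O(D)$ operations. After that, Proposition~\ref{prop:edge-shortcuts} yields $\chi_{Q,1}$ or $\chi_{Q,D-1}$ by coefficient rescaling in $O(D)$ operations. This avoids the logarithmic overhead recorded in \eqref{eq:edge-tree-volume}. From now on assume $2\le\min(S,D-S)$, which is exactly the hypothesis \eqref{eq:nonedge-condition} of Theorem~\ref{thm:balanced-volume}.

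\emph{Interior degrees.} The steps are as follows.
\begin{enumerate}
\item Build a factor tree with Lemma~\ref{lem:bounded-degree-tree}. Its nonterminal splits are $2/3$-balanced and its terminal groups have degree less than $3d_0$.
\item Inside each terminal group, combine the individual factors by any binary tree. All such nodes have degree $O(d_0)=O(1)$. Refining each terminal group this way does not change the total-volume estimate: the subtree below a terminal group has $O(1)$ nodes, each with $W_v\le 2^{3d_0}=O(1)$, and there are $O(D)$ groups. So these nodes contribute $O(D)=O(N)$ in total.
\item At the leaves (the given factors $q_j$), compute every retained $\chi_{q_j,k}$, $k\in I_v$, directly. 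Since $d_j\le d_0$, one can form the normalized compound $(-1)^kq_{d_j}\bigwedge^kC_{q_j}$ of size at most $\binom{d_0}{k}$ and take its characteristic polynomial. This is $O(1)$ operations per leaf and uses no roots.
\item Run the pruned recursion \eqref{eq:pruned-star}. By Proposition~\ref{prop:pruned-tree}, its output at the root is $\chi_{Q,S}$.
\end{enumerate}

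\emph{Cost.} By \eqref{eq:node-arithmetic}, the work at node $v$ is $\widetilde O(W_v)$. This uses the Bostan--Flajolet--Salvy--Schost composed products in characteristic zero, then a product tree over the at most $d_v+1$ sector factors, whose degrees sum to $\binom{d_v}{k}$ by Vandermonde. Summing over nodes gives $\widetilde O(\sum_v W_v)$. The upper balanced part of the tree is $\theta$-balanced with $\theta=2/3$, so Theorem~\ref{thm:balanced-volume} bounds its total volume by $O(N)$. Step 2 bounds the terminal subtrees by $O(N)$. Hence the total is $\widetilde O(N)$.

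\emph{Main obstacle.} The delicate point is that Theorem~\ref{thm:balanced-volume} is stated for a tree that is $\theta$-balanced at every internal node. Lemma~\ref{lem:bounded-degree-tree} only gives balance down to degree $3d_0$. I would therefore re-run the proof of Theorem~\ref{thm:balanced-volume} restricted to the balanced part and check each ingredient separately:
\begin{itemize}
\item $\sum d_v^2=O(D^2)$ still holds, since the geometric decay of squares only requires balance at the splits actually summed over.
\item The height is still $O(\log D)$.
\item The number of nodes with $d_v>D/2$ is still $O(1)$.
\end{itemize}
The terminal subtrees are then added separately as in step 2.

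A secondary concern is monicity, since the composed-product algorithm requires monic inputs. All $\chi$ are monic by definition, so this is satisfied. The polylogarithmic factors, both from $\mathsf M$ and from depth, are absorbed into $\widetilde O$. Finally, no $N$-dimensional matrix is ever formed: the only matrices built are the leaf compounds of bounded size.
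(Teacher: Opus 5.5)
Your proposal is correct and follows the paper's proof: build the $2/3$-balanced tree of Lemma~\ref{lem:bounded-degree-tree}, pay $O_{d_0}(1)$ per bounded-degree terminal group, invoke Theorem~\ref{thm:balanced-volume} together with \eqref{eq:node-arithmetic} when $2\le\min(S,D-S)$, and use Proposition~\ref{prop:edge-shortcuts} when $S\in\{1,D-1\}$. Your explicit $\widetilde O(D)$ expansion of $Q$ in that edge case is a detail the paper leaves implicit, and the ``main obstacle'' you flag disappears if you treat each terminal group as a leaf, since every internal node is then a $2/3$-balanced split and Theorem~\ref{thm:balanced-volume} applies verbatim; the paper's optional duality reduction of $S>D/2$ to $D-S$ is likewise unnecessary for you because of Lemma~\ref{lem:W-symmetry}.
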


\begin{proof}
Use Lemma~\ref{lem:bounded-degree-tree}.  By
Theorem~\ref{thm:sector-duality}, a target $S>D/2$ may first be replaced by
$D-S$ and recovered at the end in $O(N)$ field operations, so the tree itself
may work with the smaller complementary exterior degree.  Each terminal group
has bounded total degree, so all independently needed profile polynomials can
be constructed in $O_{d_0}(1)$ field operations; over all terminal groups
this contributes $O_{d_0}(D)$.  If $2\le\min(S,D-S)$,
Theorem~\ref{thm:balanced-volume} and \eqref{eq:node-arithmetic} give total
cost $\widetilde O(N)$.  Complementary local components may be reconstructed
on demand by \eqref{eq:sector-duality}, again within this bound.  If $S=1$
or $S=D-1$, use Proposition~\ref{prop:edge-shortcuts}; here $N=D$, so the
cost is $O(N)$.  This proves the claim.
\end{proof}

\paragraph{A size illustration.}
Table~\ref{tab:profile-volume} gives exact internal profile-volume counts for a
perfectly balanced tree whose leaves are quadratic factors and whose target is
the central exterior degree $S=D/2$.  The leaf profiles are omitted because
their construction has bounded cost.  The third column gives the original
volume $\sum W_v$, while the fourth counts only components that need to be
constructed independently when complementary sectors are recovered by
Theorem~\ref{thm:sector-duality}.  The root output is present in both columns,
so the overall ratios rapidly approach $1$; the saving concerns the non-root
intermediate profile data.

\begin{table}[ht]
\centering
\caption{Internal profile volume for balanced products of quadratic factors,
with and without complementary-sector reconstruction.}
\label{tab:profile-volume}
\begin{tabular}{rrrrr}
\toprule
$D$ & $N=\binom{D}{D/2}$ & $\sum W_v$ & duality-aware volume & ratio to $N$ \\
\midrule
$8$  & $70$        & $102$       & $92$        & $1.3143$ \\
$16$ & $12{,}870$  & $13{,}446$  & $13{,}240$  & $1.02875$ \\
$32$ & $601{,}080{,}390$ & $601{,}212{,}614$ & $601{,}159{,}536$ & $1.00013$ \\
\bottomrule
\end{tabular}
\end{table}

\begin{remark}[What the speedup does and does not say]
Theorem~\ref{thm:output-sensitive} assumes that a useful factorization is
already available over the coefficient field.  Finding such a factorization
may dominate the computation.  Over $\mathbb Q$, coefficient heights can also
grow rapidly, so the field-operation count does not by itself control bit
complexity.  For rational input, a natural exact implementation reduces the
profile arithmetic modulo several primes that avoid the input denominators and
then reconstructs the rational output coefficients by Chinese remaindering
and rational reconstruction; this standard modular strategy limits
intermediate coefficient swell \cite[Chapter~5]{vonZurGathenGerhard2013}.
The number and size of the required primes, however, depend on height bounds
for the output coefficients.  Establishing such bounds for the present
recurrence polynomials lies outside the scope of this paper.

Also, if all roots of $Q$ are already available in the working field, direct
enumeration of the $N$ Widom modes followed by a fast product tree is itself
quasi-linear in $N$.  The factorized algorithm is most useful as a root-free
construction over the original field for structured symbols whose factors
have small degrees or reusable graded profiles.  Complementary-sector duality
reduces the independently constructed local profile to essentially one half
at large local degrees, but it does not change the overall
$\widetilde O(N)$ order because the root polynomial itself already has degree
$N$.  Table~\ref{tab:profile-volume} is therefore a size illustration, not a
software benchmark; implementation
comparisons depend on coefficient heights, arithmetic representation, and the
generic algorithms selected by a computer algebra system.
\end{remark}

\end{document}